\renewcommand{\bf}{\textbf}
\numberwithin{equation}{section}
\theoremstyle{plain}
\newtheorem{thm}{Theorem}[section]
\newtheorem{prop}[thm]{Proposition}
\newtheorem{lem}[thm]{Lemma}
\newtheorem{cor}[thm]{Corollary}
\newtheorem{conj}[thm]{Conjecture}
\theoremstyle{definition}
\newtheorem{defn}[thm]{Definition}
\newtheorem{ex}[thm]{Example}\theoremstyle{remark}
\newtheorem{rk}[thm]{Remark}
\newcommand{\ve}{\varepsilon}
\newcommand{\inj}{\operatorname{inj}}
\newcommand{\loc}{\mathrm{loc}}
\newcommand{\spt}{\operatorname{spt}}
\newcommand{\csum}{\mathbin{\#}}
\newcommand{\g}{\operatorname{genus}}
\newcommand{\current}[1]{\llbracket #1\rrbracket}
\DeclareFontFamily{U}{MnSymbolC}{}
\DeclareSymbolFont{MnSyC}{U}{MnSymbolC}{m}{n}
\DeclareFontShape{U}{MnSymbolC}{m}{n}{
	<-6>  MnSymbolC5
	<6-7>  MnSymbolC6
	<7-8>  MnSymbolC7
	<8-9>  MnSymbolC8
	<9-10> MnSymbolC9
	<10-12> MnSymbolC10
	<12->   MnSymbolC12}{}
\DeclareMathSymbol{\intprod}{\mathbin}{MnSyC}{'270}
\DeclareRobustCommand{\aech}{\text{\reflectbox{$\intprod$}}}
\newcommand{\Mrest}{\mathbin\aech}
\begin{document}
\title[Positive Scalar Curvature on Noncompact Manifolds]{Positive Scalar Curvature on Noncompact Manifolds and the Liouville Theorem}
\author{Martin Lesourd, Ryan Unger, and Shing-Tung Yau}

\address{Black Hole Initiative, Harvard University, Cambridge, MA 02138}
\email{mlesourd@fas.harvard.edu}

\address{Department of Mathematics, Princeton University, Princeton, NJ 08544}
\email{runger@math.princeton.edu}

\address{Department of Mathematics, Harvard University, Cambridge, MA }
\maketitle

\begin{abstract}
Using minimal hypersurfaces, we obtain topological obstructions to admitting complete metrics with positive scalar curvature $R>0$ on a given class of non-compact manifolds $M^n$ for $n<8$. \\ \indent 
We show that the Liouville theorem for locally conformally flat manifolds $M^n$ with $R\geq 0$ follows from the impossibility of admitting a complete metric with $R>0$ on $T^n \# M^n$. With the recent work of Chodosh-Li, the Liouville theorem is now proved in all remaining cases. \\ \indent 
Finally, using MOTS instead of minimal hypersurfaces, we show an Initial Data Set version of these results with the Dominant Energy Scalar $\mu-|J|$ appearing instead of $R$. 
\end{abstract}

\section{Main results}
All manifolds are assumed smooth and orientable. A classic result of Schoen--Yau \cite{SY79} for $n\leq 7$ and Gromov--Lawson \cite{GL80, GL83} for all $n$ states that the $n$-torus $T^n$ does not admit a Riemannian metric with positive scalar curvature $R>0$. 
\begin{thm}[Geroch's Conjecture]
$T^n$ does not admit a metric with $R>0$.
\end{thm}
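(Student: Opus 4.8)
The plan is to run the Schoen--Yau dimensional-descent argument via stable minimal hypersurfaces, which is exactly what forces the restriction $n\le 7$; for $n\ge 8$ one must instead desingularize the minimizers or pass to the Gromov--Lawson Dirac-operator (enlargeability) method, and I will only sketch the minimal-hypersurface route. It is convenient to prove the stronger statement, by downward induction on $n\in\{1,\dots,7\}$: \emph{a closed oriented manifold $M^n$ carrying classes $\alpha_1,\dots,\alpha_n\in H^1(M;\mathbb{Z})$ with $\alpha_1\cup\cdots\cup\alpha_n\neq 0$ in $H^n(M;\mathbb{Z})$ (equivalently, admitting a map of nonzero degree to $T^n$) carries no metric with $R>0$.} Since $T^n$ obviously has such classes — the standard generators $[dx^1],\dots,[dx^n]$ — this gives the theorem. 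Base case $n\le 2$: a closed oriented curve has $R\equiv 0$; and if $M^2$ carries $\alpha_1,\alpha_2$ with $\alpha_1\cup\alpha_2\neq 0$ then these classes are linearly independent, so $b_1(M)\ge 2$, hence $\chi(M)\le 0$, and Gauss--Bonnet forbids $R=2K>0$.

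For the inductive step, suppose $(M^n,g)$ has $R>0$ and carries $\alpha_1,\dots,\alpha_n$ as above. Since $\alpha_n\neq 0$, its Poincar\'e dual class $\mathrm{PD}(\alpha_n)\in H_{n-1}(M;\mathbb{Z})$ is nonzero, so minimizing area in this class (using the compactness theorem for integral currents) produces a nonempty closed oriented \emph{stable} minimal hypersurface $\Sigma^{n-1}\subset M$ with $\iota_*[\Sigma]=\mathrm{PD}(\alpha_n)$, where $\iota\colon\Sigma\hookrightarrow M$. The crucial point is that $\Sigma$ is a \emph{smooth embedded} submanifold: this is precisely interior regularity for codimension-one area minimizers, valid because $\dim\Sigma=n-1\le 6$ (in dimension $\ge 7$ the minimizer can be singular along a set of codimension $7$, which is what breaks the argument for $n\ge 8$). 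Stability yields
\begin{equation}
\int_\Sigma |\nabla\phi|^2\;\ge\;\int_\Sigma\bigl(|A|^2+\Ric_g(\nu,\nu)\bigr)\phi^2\qquad\text{for all }\phi\in C^\infty(\Sigma).
\end{equation}

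Now feed in the Gauss equation: with $H=0$ one has $|A|^2+2\Ric_g(\nu,\nu)=R_g-R_\Sigma$, so $|A|^2+\Ric_g(\nu,\nu)=\tfrac12|A|^2+\tfrac12 R_g-\tfrac12 R_\Sigma$; substituting and multiplying by $2$,
\begin{equation}
\int_\Sigma\bigl(2|\nabla\phi|^2+R_\Sigma\,\phi^2\bigr)\;\ge\;\int_\Sigma\bigl(|A|^2+R_g\bigr)\phi^2\;>\;0\qquad\text{whenever }\phi\not\equiv 0,
\end{equation}
using $R_g>0$ on the compact $\Sigma$. If $\dim\Sigma=n-1\ge 3$, then since $\tfrac{4(n-2)}{n-3}\ge 2$ this forces the first eigenvalue of the conformal Laplacian $-\tfrac{4(n-2)}{n-3}\Delta_\Sigma+R_\Sigma$ to be strictly positive, so a conformal change of the induced metric (by the positive first eigenfunction) yields a metric on $\Sigma$ with $R>0$; if $n-1=2$ we instead take $\phi\equiv1$ on a single component and read off that its Euler characteristic is positive.

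Finally, the obstruction descends one dimension. By the projection formula and $\iota_*[\Sigma]=\mathrm{PD}(\alpha_n)$,
\begin{equation}
\bigl\langle\iota^*\alpha_1\cup\cdots\cup\iota^*\alpha_{n-1},\,[\Sigma]\bigr\rangle
=\bigl\langle\alpha_1\cup\cdots\cup\alpha_n,\,[M]\bigr\rangle\neq 0,
\end{equation}
so $\iota^*\alpha_1\cup\cdots\cup\iota^*\alpha_{n-1}\neq0$ in $H^{n-1}(\Sigma;\mathbb{Z})$, hence is nonzero on some connected component $\Sigma_0$; this component is a closed oriented stable minimal hypersurface carrying $n-1$ degree-one classes with nonvanishing product, and it still has $R>0$ after the conformal change (restrict the stability inequality to $\Sigma_0$). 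For $n-1\ge 3$ this contradicts the inductive hypothesis. When $n-1=2$, restricting stability to $\Sigma_0$ with $\phi\equiv 1$ forces $\chi(\Sigma_0)>0$, i.e.\ $\Sigma_0\cong S^2$, which contradicts $b_1(\Sigma_0)\ge 2$. This closes the induction. The only place $n<8$ is essential is the interior regularity of area minimizers; the second delicate point is passing from the \emph{integral} stability inequality to \emph{pointwise} $R>0$ on $\Sigma$ via the conformal Laplacian, which is exactly why the case $n-1=2$ must be treated separately by Gauss--Bonnet.
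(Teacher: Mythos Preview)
The paper does not prove this statement itself: Geroch's conjecture is quoted as a classical theorem of Schoen--Yau (for $n\le 7$) and Gromov--Lawson (all $n$), serving only as background for the noncompact generalization in Theorem~\ref{thm:main}. Your argument is exactly the Schoen--Yau minimal-hypersurface descent and is correct as written, including the careful handling of the $n-1=2$ step via Gauss--Bonnet and the explicit acknowledgment that regularity of the area minimizer forces $n\le 7$.

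For context, your cup-product formulation (classes $\alpha_1,\dots,\alpha_n\in H^1$ with $\alpha_1\cup\cdots\cup\alpha_n\ne 0$) is essentially the Poincar\'e-dual of the paper's $\mathcal C^n$ condition (a map to $T^{n-2}$ whose generic fiber is non-spherical in $H_2$); the intersection of the dual hypersurfaces is precisely that fiber. The paper's additional machinery---anchored homology classes, compact exhaustions with mean-convex boundary, $(\lambda,\rho)$-contractibility---exists solely to keep the minimizing sequence from escaping to infinity and to force compactness of the limit when the ambient manifold is $\mathcal C^n\csum X^n$ with $X^n$ noncompact. None of that is needed for the closed torus, where your direct argument suffices. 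One terminological quibble: you call this ``downward induction on $n$,'' but logically it is ordinary upward induction (assume the statement in dimension $n-1$, prove it in dimension $n$); the \emph{geometric} descent is what goes downward.
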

Interest in $T^n$ originally came from the Positive Mass Theorem \cite{SY79PMT} in General Relativity, which states that an asymptotically flat manifold with $R\geq 0$ has non-negative ADM mass and zero if and only if the manifold is flat.\footnote{See Chap. 3 of \cite{Lee} for the definitions of ADM mass and asymptotic flatness.} Although not originally approached in this way, a compactification argument of Lokhamp \cite{L99} showed that the Positive Mass Theorem follows from the non-existence of a smooth complete metric with $R>0$ on compact manifolds of the form $T^n \csum X^n$.  \\ \indent
Here, letting $X^n$ be non-compact and replacing $T^n$ with the class of $\mathcal C^n$ manifolds\footnote{Definition 1.2 is given in \cite{G18} and inspired by the proof in \cite{SY79}, though note that the labels $C_n$ and $C'_n$ in \cite{SY79} are related to but not identical to $\mathcal C^n$.}, we show the following.
\begin{thm}\label{thm:main}
Manifolds of the form $M^n=\mathcal C^n \csum X^n$ do not admit a complete smooth metric with $R>0$ if either
\begin{itemize}
\item[(i)] $n=3$,
\item[(ii)] or $4\leq n\leq 7$ and $M^n$ is $(\lambda,\rho)$-contractible.
\end{itemize}
and moreover any complete smooth metric with $R\geq 0$ is Ricci-flat.
\end{thm}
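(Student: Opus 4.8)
The plan is to carry out the Schoen--Yau inductive descent by minimal hypersurfaces, adapted to the noncompact setting, with the $(\lambda,\rho)$-contractibility hypothesis supplying precisely the macroscopic control that compactness provides in the closed case. I would first prove the non-existence of a complete metric with $R>0$ by induction on $n$, and then deduce the Ricci-flatness statement by tracking the equality cases through the same scheme.

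For the descent step, suppose $M^n=\mathcal C^n\csum X^n$ with $4\le n\le 7$ carries a complete metric $g$ with $R>0$. By Definition 1.2, the $\mathcal C^n$-structure of the compact summand survives the connected sum with the (possibly noncompact) $X^n$; in particular it provides a class $\sigma$, realized by a regular level set of a proper map $f\colon M\to S^1$ chosen to avoid the connected-sum region, along which a hypersurface will inherit a $\mathcal C^{n-1}$-structure. I would minimize $(n-1)$-volume in the class $\sigma$. The only place $(\lambda,\rho)$-contractibility enters is here: it yields a uniform filling/width bound at the scale $\rho$ that confines a $\sigma$-minimizing sequence to a fixed compact region disjoint from the connected-sum locus, so that a closed minimizer $\Sigma^{n-1}$ exists, and since $n-1\le 6$ it is smooth and embedded. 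By construction $\Sigma$ inherits the $\mathcal C^{n-1}$-structure, so it is again of the form $\mathcal C^{n-1}\csum X'^{n-1}$, and one checks that its contractibility constants are controlled by $(\lambda,\rho)$. Being area-minimizing, $\Sigma$ is stable: writing the stability inequality $\int_\Sigma(|\nabla\varphi|^2-(|A|^2+\Ric(\nu,\nu))\varphi^2)\ge 0$ and combining it with the Gauss identity $2(|A|^2+\Ric(\nu,\nu))=|A|^2+R-R_\Sigma$, the positive first eigenfunction $\phi$ of the stability operator satisfies $-\Delta\phi\ge\frac{1}{2}(R-R_\Sigma)\phi$, whence a direct computation shows that the conformally changed metric $\tilde g=\phi^{2/(n-2)}g|_\Sigma$ has $R_{\tilde g}>0$. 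Thus $\Sigma$ is a $(\lambda',\rho')$-contractible $\mathcal C^{n-1}\csum X'^{n-1}$ manifold carrying a (closed) PSC metric.

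Iterating the descent lowers the dimension while staying inside the class $\mathcal C^{\bullet}\csum X'^{\bullet}$ with the contractibility hypothesis intact, until the dimension reaches $2$; but a $\mathcal C^2$ manifold is a closed surface of positive genus, which by Gauss--Bonnet admits no metric with $R>0$ --- the contradiction proving (ii). Case (i), $n=3$, is handled separately and with no width hypothesis: on a complete $3$-manifold with $R>0$ one replaces the minimizing surface by a Gromov-type warped $\mu$-bubble, which is automatically compact --- it is trapped between two level sets of a function arranged to blow up towards the ends of $M$ --- and still carries the $\mathcal C^2$-structure, hence is a closed surface of positive genus; the $\mu$-stability inequality then gives $\int_\Sigma K_\Sigma>0$, contradicting Gauss--Bonnet.

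For the final assertion, let $g$ be complete with $R\ge 0$. Running the same descent, each stability (or $\mu$-stability) inequality now yields only $\mu_1\ge 0$, hence $R_{\tilde g}\ge 0$ at every stage. If strict positivity ever occurs --- equivalently if some $\mu_1>0$, some second fundamental form fails to vanish, or $R>0$ somewhere on a slice --- then that slice is a $\mathcal C^k\csum X'^k$ manifold ($k\ge 2$) carrying a genuine PSC metric, contradicting the $R>0$ case already established. Otherwise equality holds throughout: every minimal slice is totally geodesic with $\Ric(\nu,\nu)=0$ and $R=0$ along it, and is inductively Ricci-flat; the standard minimal-slice foliation together with the strong maximum principle then propagates this to all of $(M,g)$, giving Ricci-flatness. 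The step I expect to be the crux is the noncompact existence-and-confinement of the minimizing hypersurface: showing that $(\lambda,\rho)$-contractibility genuinely traps a $\sigma$-minimizing sequence in a compact region disjoint from the connected-sum locus, and --- so that the induction closes --- that this quantitative hypothesis passes to $\Sigma$ with usable constants. That such a hypothesis is available only for $n\ge 4$ is precisely why dimension $3$ needs the separate, hypothesis-free $\mu$-bubble input.
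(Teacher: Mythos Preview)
Your descent for $4\le n\le 7$ has a real gap: you attribute the confinement of a minimizing sequence entirely to $(\lambda,\rho)$-contractibility, but contractibility supplies only a \emph{local lower} mass bound (``fatness'': $\mathbf M(\Sigma\cap B_\rho(x))\ge c$ via a filling inequality and a reverse-Gronwall argument), which together with the global upper mass bound forces a \emph{nonempty} limit to be compact. It does not by itself prevent the minimizers from drifting into the noncompact end and producing an empty limit---$S^{n-1}\times\Bbb R$ is $(\lambda,\rho)$-contractible, yet the generator of $H_{n-1}$ has minimizers escaping to infinity. The paper supplies the missing ingredient, \emph{homological anchoring}: using the Mayer--Vietoris isomorphism $H_{n-1}(M)\cong H_{n-1}(X)\oplus H_{n-1}(Y)$ it proves that the classes $a_i$ coming from $Y$ cannot be represented by cycles disjoint from $\hat Y$, and combines this with an isoperimetric lower bound to show the limit is nontrivial. (Your phrase ``disjoint from the connected-sum locus'' is also backwards: anchoring pins $\Sigma$ \emph{near} $\hat Y$.) Once $\Sigma$ is compact, the rest is the classical closed Schoen--Yau induction, so there is no need to propagate contractibility constants; the paper also organizes the induction through intersections of the anchored $H_{n-1}$-classes down to a non-spherical $H_2$-class (its $\mathcal C'^n$ condition), rather than by recognizing each slice as a new connected sum $\mathcal C^{n-1}\csum X'^{n-1}$.

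For $n=3$ your route is genuinely different from the paper's. The paper runs the \emph{same} exhaustion-and-minimize scheme, allows the limit $\Sigma$ to be noncompact a priori, and then uses the classification of complete stable minimal surfaces in ambient $R>0$ together with the global finite-area bound (via Gromov--Lawson, \cite[Theorem 8.8]{GL83}) to force every component to be a sphere, contradicting $\mathcal C'^3$. Your $\mu$-bubble substitute is the Chodosh--Li mechanism, which the paper cites as an alternative argument; but note that the barrier trapping such a bubble comes from the $S^1$-direction furnished by the map $Y\to T^1$, not from a function ``blowing up towards the ends of $M$'', and one still needs an anchoring-type input to keep the bubble from escaping into $\hat X$.
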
 
\begin{defn}
A closed orientable manifold $Y^n$ is said to be $\mathcal C^n$ if it admits a smooth map $f:Y^n\to T^{n-2}$ such that the homology class of the preimage of a generic point,
\[[f^{-1}(t)]\in H_2(Y^n),
\]
is \textit{non-spherical}, i.e., it is not in the image of the Hurewicz homomorphism $\pi_2(Y)\to H_2(Y)$. 
\end{defn}
\begin{defn}
A manifold $M$ is $(\lambda,\rho)$-contractible if there exists constants $(\lambda,\rho)>0$ such that all balls of $M$ of radii $r\leq \rho$ are $\lambda$-Lipchitz contractible in $M$, i.e., there exist $\lambda$-Lipchitz maps 
\begin{equation*}
\phi=\phi_{x,r}:B_x(r)\times [0,r]\to M, \: x\in M,\: r\in [0,\rho]
\end{equation*}
where the maps $\phi(\cdot, 0)$ are the original embeddings $B_x(r)\subset M$ and the maps $\phi(\cdot, r)$ are constant.
\end{defn} 
We believe that $(\lambda,\rho)$-contractibility is technical, and it is likely that the restriction $n\leq 7$ can be lifted following \cite{SY17}.  \\ \indent 
To prove Theorem 1.3, we construct a sequence of minimizers $\{\Sigma_i\}$ and show convergence in $C^\infty_{\text{loc}}$ to a limiting hypersurface $\Sigma$. When $M^n$ is non-compact, such a sequence can degenerate to a non-compact hypersurface $\Sigma$, but for $n=3$ results about stable minimal hypersurfaces in the presence of $R>0$ are strong enough to yield a contradiction. For $4\leq n\leq 7$, $(\lambda,\rho)$-contractibility implies that $\Sigma$ is in fact compact, permitting for the argument in \cite{SY79} to go through.\footnote{When $\Sigma$ is non-compact, the stability argument of \cite{SY79} seems hampered by the possible lack of completeness of the conformal metric.} \\ \indent 
Recently and by different arguments, Chodosh--Li \cite{CL20} have shown that orientable manifolds of the form $M^n=\left(\mathcal{C}^{n-1}\times S^1\right) \csum X^n$ cannot admit a complete smooth metric with $R>0$ for $3\leq n\leq 7$. The $S^1$ factor is used to obtain prescribed mean curvature surfaces (``$\mu$-bubbles") from a carefully chosen functional, which, by the warped product method along with an associated stability inequality, are manipulated to yield a topological contradiction. The use of $\mu$-bubbles to study similar problems in scalar curvature also feature in \cite{G96, G18, G20, Z20}.  \\ \indent 
We also note that based on the ideas in \cite{SY17} and \cite{CS18}, it is readily shown that $Y^n\#X^n$ does not admit uniformly positive scalar curvature $R\geq \sigma>0$ for $n\leq 7$ if $Y^n$ is enlargeable, which indeed highlights the additional difficulty in dealing with $R>0$ instead of $R\geq \sigma>0$. \\ \indent
One of the motivations for Theorem \ref{thm:main} is the following Liouville Conjecture for locally conformally flat (LCF) manifolds $M^n$ with nonnegative scalar curvature $R\geq 0$. 
\begin{conj}[LCF Liouville Conjecture]\label{SYLC}
Let $(M^n,g)$, $n\geq 3$, be a complete, LCF, $R\geq 0$ manifold, and $\Phi:M^n\to S^n$ a conformal map. Then $\Phi$ is injective and $\partial\Phi(M)$ has zero Newtonian capacity.
\end{conj}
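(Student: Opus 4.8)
\medskip
\noindent\emph{Proof idea.} The plan is to prove the contrapositive, reducing to Theorem~\ref{thm:main}. Being conformal, $\Phi$ is automatically a local diffeomorphism onto the open set $\Omega:=\Phi(M)\subseteq S^n$; put $\Lambda:=S^n\setminus\Omega$, so $\partial\Phi(M)=\partial\Omega\subseteq\Lambda$. If $\Omega=S^n$ then $\Phi$ is a covering onto the simply connected sphere, hence a diffeomorphism with $\partial\Phi(M)=\emptyset$, and we are done; so assume $\Lambda\neq\emptyset$, whence in particular $M$ is non-compact. Arguing by contradiction, suppose $\Phi$ is not injective, or $\partial\Phi(M)$ has positive Newtonian capacity. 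We will construct a complete metric of nonnegative scalar curvature, positive somewhere, on $N:=T^n\csum M^n$. Since $T^n$ is a $\mathcal C^n$ manifold --- the projection $T^n\to T^{n-2}$ has fibre $T^2$, and $[T^2]\in H_2(T^n)$ is non-spherical because $\pi_2(T^n)=0$; likewise $T^n=\mathcal C^{n-1}\times S^1$ for $n\ge 4$ --- the manifold $N$ has the form $\mathcal C^n\csum X^n$ with $X^n=M^n$. When $n=3$ this contradicts Theorem~\ref{thm:main}(i) together with its Ricci-flatness conclusion (a Ricci-flat metric has $R\equiv 0$); when $4\le n\le 7$ it contradicts the theorem of Chodosh--Li~\cite{CL20}, after a standard conformal perturbation upgrading the scalar curvature to be positive everywhere. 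Hence $\Phi$ is injective and $\partial\Phi(M)$ has zero capacity.

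\medskip
To build the metric on $N$, write $g=u^{4/(n-2)}\,\Phi^{*}g_{S^n}$ with $u>0$; the hypothesis $R_g\ge 0$ says precisely that $u$ is a positive supersolution of the conformal Laplacian $L:=L_{\Phi^{*}g_{S^n}}$, and completeness of $g$ forces $u\to\infty$ along $\Lambda$. The construction follows the compactification scheme used by Lokhamp~\cite{L99} for the positive mass theorem, with positive capacity of $\partial\Phi(M)$ (respectively, non-injectivity of $\Phi$) playing the role of negative ADM mass. First I would form the smooth manifold $N=T^n\csum M^n$ by excising a small ball around a point $p\in M$ lying at positive distance from $\Lambda$ and gluing in $T^n$ with a small ball removed and the flat metric; this yields a metric $g_N$ equal to $g$ off a compact neck, flat on the torus piece, with $R_{g_N}\ge 0$ except possibly on the neck. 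Then I would correct $g_N$ by a conformal change $\bar g=w^{4/(n-2)}g_N$ with $w>0$ solving $L_{g_N}w\ge 0$ globally, $L_{g_N}w>0$ on the neck, and $w$ complete on the ends of $M$ near $\Lambda$, so that $\bar g$ is complete with $R_{\bar g}\ge 0$ and $R_{\bar g}>0$ somewhere. Such a $w$ exists because positive capacity of $\partial\Phi(M)$ makes the end of $M$ near $\Lambda$ non-parabolic for $L$: transplanting via $\Phi$ the equilibrium potential of that set and combining it with the supersolution $u$ above produces a positive strict supersolution with room to absorb the negative curvature created in the neck while staying complete. In the non-injective case the required slack (and, where needed, the extra topology) is supplied instead by the overlapping sheets of $\Phi$, through an analogous but more delicate variant of the same surgery.

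\medskip
The hard part will be this second step: turning the analytic defect --- positive capacity of $\partial\Phi(M)$, or non-injectivity of $\Phi$ --- into an honest complete metric with $R\ge 0$, positive somewhere, on the connected sum. One must solve the sub/supersolution problem for $L_{g_N}$ with prescribed blow-up along the noncompact ends approaching $\Lambda$, show the solution stays positive and yields a complete metric, and arrange strict positivity of the scalar curvature exactly on the neck without spoiling it there; this is the Schoen--Yau surgery-with-a-torus technique, and it is where the real work lies. A secondary difficulty is that the non-injective case must be handled even when $\Lambda$ has zero capacity, so it cannot be reduced to the positive-capacity case and needs the handle-threading variant. The restriction $3\le n\le 7$ is inherited entirely from Theorem~\ref{thm:main} and~\cite{CL20}; for $n\ge 8$ the same argument would establish the conjecture once the corresponding non-existence result for complete positive scalar curvature on $\mathcal C^n\csum X^n$ is known.
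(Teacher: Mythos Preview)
Your overall contrapositive strategy---assume the conclusion fails and produce a complete metric with $R\ge 0$, not identically zero, on $T^n\csum M^n$, then invoke Theorem~\ref{thm:main} or \cite{CL20} and Kazdan~\cite{K82}---is exactly the paper's strategy. Where you diverge is in the construction of that metric, and this is where your proposal has a genuine gap.

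The paper does \emph{not} attempt to glue a flat torus directly into $M$ and then conformally repair the neck using an equilibrium potential of $\Lambda$. Instead it passes through an intermediate asymptotically flat picture: it blows up $(M,g)$ at a point $p$ using the minimal conformal Green's function $G$ (Proposition~\ref{thm:D1}), forming $\tilde g_\varepsilon=(G+\varepsilon)^{4/(n-2)}g$, which is complete with $R\ge 0$. Near $p$ this metric is conformally flat with conformal factor $v_\varepsilon=1+c_\varepsilon r^{2-n}+o(r^{2-n})$ (Lemma~\ref{3.2}). The entire analytic content of ``$\Phi$ non-injective or $\partial\Phi(M)$ has positive capacity'' is encoded, via results of \cite{SY88,SYbook}, in the single scalar $c$: one has $v\equiv 1$ if and only if $\Phi$ is injective and the boundary has zero capacity. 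If $c_\varepsilon<0$, the paper's Proposition~\ref{Lohkamp} (a superharmonic, rather than harmonic, Lohkamp compactification) flattens the AF end and produces the torus summand with $R\ge 0$, $R\not\equiv 0$. This yields the contradiction, forces $c_\varepsilon\ge 0$, and letting $\varepsilon\to 0$ gives $v\equiv 1$.

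Your proposed route---glue the torus first, then find a global $w$ with $L_{g_N}w\ge 0$, strict on the neck, complete at the $\Lambda$-ends, built from the equilibrium potential of $\Lambda$ transplanted by $\Phi$---faces a concrete obstacle you do not address: the neck lies near $p$, which you placed at positive distance from $\Lambda$, while the equilibrium potential carries its useful sign information only near $\Lambda$. There is no evident mechanism by which that potential ``absorbs'' the negative curvature created on the neck; the two regions are decoupled. The paper avoids this precisely by using $G$, whose asymptotic expansion at $p$ already encodes the global obstruction (capacity/injectivity) as a local mass coefficient, so that the torus gluing and the analytic defect live in the same place. Your treatment of the non-injective case (``overlapping sheets \ldots\ handle-threading variant'') is likewise only a gesture; in the paper both cases are handled uniformly through the equivalence $v\equiv 1\Leftrightarrow$ (injective and zero capacity), with no separate surgery needed. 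In short, the missing idea is the Green's-function blowup that converts the problem into a positive-mass-type statement amenable to Lohkamp compactification.
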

Schoen--Yau \cite{SY88} show this for $n\geq 7$, and, seperately, for $4\leq n\leq 6$ under various assumptions on the scalar and Ricci curvature. Here, we obtain the following improvement. 
\begin{thm}\label{SYL}
The nonexistence of a complete smooth metric with $R>0$ on $T^n \# M^n$ implies Conjecture \ref{SYLC}, and is thus settled for $n=3$, and for $4\leq n\leq 6$ if $M^n$ is $(\lambda,\rho)$-contractible.
\end{thm}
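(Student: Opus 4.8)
The plan is to argue by contraposition. Suppose Conjecture~\ref{SYLC} fails: there is a complete, LCF manifold $(M^n,g)$ with $R\geq 0$ and a conformal map (conformal local diffeomorphism) $\Phi\colon M^n\to S^n$ which is either non-injective, or injective with $\partial\Phi(M)$ of positive Newtonian capacity. I will produce from this data a complete smooth metric with $R>0$ on $T^n\csum M^n$. This proves the theorem: $T^n$ is a $\mathcal{C}^n$ manifold, since the projection $f\colon T^n=T^{n-2}\times T^2\to T^{n-2}$ has generic fibre a $2$-torus, whose class in $H_2(T^n)$ is nonzero and, as $\pi_2(T^n)=0$, non-spherical; hence Theorem~\ref{thm:main} with $\mathcal{C}^n=T^n$ and $X^n=M^n$ forbids such a metric when $n=3$, and when $4\leq n\leq 7$ and $T^n\csum M^n$ is $(\lambda,\rho)$-contractible, which holds (after adjusting the constants) whenever $M^n$ is. This gives Conjecture~\ref{SYLC} for $n=3$ unconditionally and for $4\leq n\leq 7$ under the contractibility hypothesis; of these, the cases $4\leq n\leq 6$ are new, $n\geq 7$ being due to Schoen--Yau \cite{SY88}.

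To prepare the construction, set $\Omega:=\Phi(M)$ --- an open subset of $S^n$ --- and $\Lambda:=\partial\Omega$; choosing a point of $\Omega$ to be $\infty$, we identify $S^n\setminus\{\infty\}=\R^n$ and regard $\Lambda$ as a compact subset of $\R^n$. The failure of the conjecture presents itself in one of two ways. \emph{(a)} $\Phi$ identifies distinct points $p\neq q$ of $M$; then $\Phi$ conformally identifies a neighbourhood of $p$ with a neighbourhood of $q$, and if $\Phi$ happens to be a nontrivial covering onto $\Omega$ this already puts us in case (b), because $\pi_1(S^n\setminus\Lambda)\neq 1$ forces $\Lambda$ to be non-polar. \emph{(b)} $\Phi$ is a conformal diffeomorphism $M\xrightarrow{\sim}\Omega$ with $\operatorname{cap}(\Lambda)>0$; then $g=u^{4/(n-2)}g_{S^n}$ for a positive $u$ on $\Omega$ that is a supersolution of the conformal Laplacian of the round metric, completeness of $g$ forces $u$ to blow up along $\Lambda$ at a definite rate, and $\operatorname{cap}(\Lambda)>0$ means $\Lambda$ is not conformally negligible.

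The heart of the matter is a Lokhamp-type compactification producing the desired metric on $T^n\csum M^n$. In case (b): I would argue that non-negligibility of $\Lambda$ is incompatible with the completeness-forced blow-up of $u$ unless $R_g$ is strictly positive somewhere near $\Lambda$, and then select a bounded region $U\subset\Omega$ bounded by a hypersurface $S\approx S^{n-1}$, capturing a definite amount of $\Lambda$, on which a controlled conformal change renders $R>0$ throughout. I would then excise $U$ and perform a Gromov--Lawson/Schoen--Yau-type surgery across $S$, gluing in a flat $T^n\setminus D^n$ through a neck on which the conformal factor is interpolated down to the flat model; the strict curvature surplus just created is exactly the budget that keeps $R>0$ along the neck. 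Away from the neck the metric is the original $g$ on one side and flat on the other, so the result is complete and has $R>0$; it lives on $T^n\csum M^n$. In case (a) (genuinely non-covering) the analogue is to transplant a flat $T^n\setminus D^n$ through small geodesic balls about $p$ and $q$, using the conformal identification of the two sheets to glue them and the mismatch of the conformal factors read on the two sheets as the curvature budget for the interpolating neck. In either case Theorem~\ref{thm:main} is contradicted, proving the conjecture.

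The principal obstacle is this gluing in the noncompact setting with only $R\geq 0$ at the outset: one must convert the soft input --- ``$\Lambda$ is non-polar'', or the conformal-factor mismatch --- into a quantitative lower bound on the curvature available near the gluing locus, and run the neck construction so that completeness survives and no new noncompact bad ends are created; in effect one solves a boundary-value problem for the conformal Laplacian on the neck with the sign of $R$ under control. I expect $n=3$ to be both cleaner and unconditional, since the relevant hypersurfaces are then surfaces, automatically LCF, and Fischer-Colbrie--Schoen-type rigidity for stable minimal surfaces under $R>0$ streamlines the argument; for $n\geq 4$ one must additionally ensure that $(\lambda,\rho)$-contractibility passes from $M^n$ to $T^n\csum M^n$, which is precisely where that hypothesis enters via Theorem~\ref{thm:main}. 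A secondary difficulty is the dichotomy in case (a): verifying whether the developing map is a covering onto its image, and, if not, making the self-intersection surgery rigorous.
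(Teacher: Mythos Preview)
Your contraposition and the final appeal to Theorem~\ref{thm:main} are correct, but the core construction---producing a complete $R>0$ metric on $T^n\csum M^n$ by direct surgery across a hypersurface $S\approx S^{n-1}$---is not a workable strategy, and you essentially acknowledge this yourself when you identify the ``principal obstacle''. Converting ``$\operatorname{cap}(\Lambda)>0$'' or a conformal-factor mismatch into a quantitative curvature budget for a Gromov--Lawson neck is not a step one can simply declare; there is no mechanism here that turns the soft hypothesis into strict positivity of $R$ on a region large enough to absorb the neck, and the case split (a)/(b) does not help. The paper does \emph{not} attempt anything like this.

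What the paper actually does is go through the Schoen--Yau \cite{SY88} reduction to a positive-mass statement. One constructs the minimal Green's function $G$ for the conformal Laplacian with pole at $p\in M$ (this exists precisely because $M$ conformally immerses in $S^n$), and observes that the conjecture holds iff $v:=G/\overline{G}\equiv 1$, where $\overline{G}$ is the pullback of the round-sphere Green's function. The blowup $\tilde g_\varepsilon=(G+\varepsilon)^{4/(n-2)}g$ is a \emph{complete} metric on $M\setminus\{p\}$ with $R\ge 0$, and near $p$ it is conformally flat of the form $v_\varepsilon^{4/(n-2)}\delta$ with a Kelvin-type expansion $v_\varepsilon=1+c_\varepsilon r^{2-n}+o(r^{2-n})$. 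If $c_\varepsilon<0$, a superharmonic version of Lohkamp's cutoff (Proposition~\ref{Lohkamp} in the paper) makes the end exactly flat while keeping $R\ge 0$ and not identically zero; quotienting by a lattice gives $T^n\csum M^n$, and Kazdan's theorem upgrades to $R>0$. This contradicts the hypothesis, so $c_\varepsilon\ge 0$; letting $\varepsilon\to 0$ gives nonnegative mass for the $G$-blowup, which by \cite{SY88} forces $v\equiv 1$. The $(\lambda,\rho)$-contractibility is then checked to survive each of these deformations (Green's-function blowup, Lohkamp cutoff, Kazdan deformation), since away from the AF end they are bi-Lipschitz and the AF end is homogeneously regular.

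In short, the missing idea is the Green's function: it simultaneously manufactures an asymptotically flat end, encodes the failure of the conjecture as a sign on a single constant $c_\varepsilon$, and puts you in the regime where Lohkamp's compactification is a one-line calculation rather than an ad hoc surgery.
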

By Theorem \ref{SYL}, the aforementioned work of Chodosh--Li \cite{CL20} settles Conjecture \ref{SYLC} in the remaining cases. We note that in \cite{SY88}, it is shown that Conjecture \ref{SYLC} follows from the \textit{positive mass conjecture with arbitrary ends}. 
\begin{conj}[Positive Mass Conjecture with Arbitrary Ends]\label{PMC}
Let $N^n$ be a complete manifold diffeomorphic to $\mathbb{R}^n$ and let $X^n$ be a complete connected non-compact manifold. Let $g$ be a smooth complete metric on $M^n=N^n \csum X^n$ with nonnegative scalar curvature $R\geq 0$. If $g$ is asymptotically flat on $N^n$ with suitable fall-off, then the ADM mass of $(M^n,g)$ measured in the asymptotically flat end $N^n$ is nonnegative. 
\end{conj}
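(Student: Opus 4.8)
The natural route is to \emph{compactify the asymptotically flat end of $N^n$ into a torus} and reduce the assertion to Theorem~\ref{thm:main}, exactly as Lohkamp's argument \cite{L99} reduces the classical Positive Mass Theorem to the non-existence of positive scalar curvature on $T^n\csum(\text{closed})$. Suppose for contradiction that the ADM mass $m$ of $(M^n,g)$, measured in the asymptotically flat end $N^n\cong\mathbb R^n$, satisfies $m<0$. Since $g$ is asymptotically flat with suitable fall-off and $m<0$, one first applies Lohkamp's deformation to obtain a complete metric $\bar g$ on $M^n$, equal to $g$ outside a compact subset of the asymptotically flat chart, with $R(\bar g)\ge 0$ everywhere, $R(\bar g)>0$ on a nonempty open set $U$, and $\bar g$ equal to the Euclidean metric $\delta$ on $\{|x|\ge r_0\}$ in the asymptotically flat coordinates. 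The deformation is supported inside the end $N^n$, so the connected-sum neck and the arbitrary end $X^n$ are untouched, and $\bar g$ is still complete.

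Next comes the compactification. Normalize so that the neck joining $N^n$ to $X^n$ lies in $\{|x|<r_0\}$, and fix a coordinate cube $Q=[-L,L]^n$ with $L>r_0$. Since every point of $\partial Q$ has $|x|\ge L>r_0$, the metric $\bar g$ equals $\delta$ on a collar of $\partial Q$ inside $Q$; moreover $\{x\notin Q\}$ is contained in the Euclidean region and is precisely a neighborhood of the asymptotically flat end of $M^n$. Excise this end to obtain a manifold $\Omega$ with boundary $\partial Q$, carrying the metric $\bar g$ and flat near $\partial Q$, and identify opposite faces of $\partial Q$ by the Euclidean translations. The result is a complete boundaryless Riemannian manifold $(\tilde M,\tilde g)$, smooth across the seams because the identifications are isometries of $\delta$ near $\partial Q$; it has $R(\tilde g)\ge 0$ everywhere and $R(\tilde g)>0$ on the image of $U$, and topologically $\tilde M=(T^n\setminus B)\cup_{S^{n-1}}(X^n\setminus B)=T^n\csum X^n$, since $Q$ with opposite faces identified is $T^n$.

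Now invoke Theorem~\ref{thm:main}. The torus is a $\mathcal C^n$ manifold: the projection $T^n=T^{n-2}\times T^2\to T^{n-2}$ is smooth and a generic fibre is a copy of $T^2$, whose class in $H_2(T^n)$ is nonzero and non-spherical, since $\pi_2(T^n)=0$ and so the Hurewicz image vanishes. Hence $\tilde M=T^n\csum X^n$ has the form $\mathcal C^n\csum X^n$. For $n=3$, Theorem~\ref{thm:main}(i) forces every complete metric with $R\ge 0$ on $\tilde M$ to be Ricci-flat, hence scalar-flat, contradicting $R(\tilde g)>0$ on $U$; for $4\le n\le 7$ the same contradiction follows from Theorem~\ref{thm:main}(ii) once $(\tilde M,\tilde g)$ is known to be $(\lambda,\rho)$-contractible. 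In either case $m\ge 0$, which is the assertion of Conjecture~\ref{PMC} in these dimensions.

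The principal obstacle is precisely this $(\lambda,\rho)$-contractibility hypothesis for $4\le n\le 7$: it is a property of the metric, not of the diffeomorphism type, so one must either build uniform Lipschitz contractibility of $(\tilde M,\tilde g)$ into the construction---plausible, as away from the cube the manifold is unchanged while on the cube the metric is flat at a fixed scale, though this requires a genuine argument---or restrict to $n=3$, where the conclusion is unconditional, and then appeal to Chodosh--Li \cite{CL20} (writing $T^n=T^{n-1}\times S^1$ with $T^{n-1}$ a $\mathcal C^{n-1}$ manifold) to drop the hypothesis throughout $3\le n\le 7$. Two subsidiary points remain routine: verifying that Lohkamp's deformation really delivers a metric that is exactly Euclidean outside a compact set with $R>0$ somewhere at the posited fall-off, and the well-definedness of the ADM mass---these are what ``suitable fall-off'' is there to guarantee. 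The alternative of running the Schoen--Yau minimal-hypersurface descent directly on $(M^n,g)$ fails here because the minimal hypersurfaces may run off into the uncontrolled end $X^n$; the compactification above transfers exactly that difficulty into Theorem~\ref{thm:main}, whose proof is designed to cope with it.
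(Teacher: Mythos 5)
This statement is labelled a \emph{conjecture} in the paper, and the paper does not prove it; in fact the surrounding text explicitly warns against the strategy you propose. Immediately after stating Conjecture~\ref{PMC}, the authors write that ``Lokhamp-type compactification arguments do not seem to allow reducing Conjecture \ref{PMC} to a statement about $T^n\csum X^n$,'' and they structure the rest of the paper so as to avoid needing Conjecture~\ref{PMC} at all: Theorem~\ref{SYL} is obtained by compactifying the \emph{conformal blowup} end, where the requisite structure is available for free, rather than a general asymptotically flat end.

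The gap in your argument is the step ``one first applies Lohkamp's deformation to obtain a complete metric $\bar g$ \dots\ equal to the Euclidean metric $\delta$ on $\{|x|\ge r_0\}$,'' which you then defer as ``routine'' under ``suitable fall-off.'' Every version of this deformation (Lohkamp's original, Corvino--Pollack's simplification, and the paper's Proposition~\ref{Lohkamp}) hinges on first bringing the metric into conformally flat form $g=u^{4/(n-2)}\delta$ near infinity with $u=1+cr^{2-n}+o(r^{2-n})$, $c<0$, so that the cutoff $\tilde u=\Psi(u)$ is superharmonic and one can cap off by a flat torus. Producing such a conformal representation from a merely asymptotically flat metric is the content of the Schoen--Yau density theorem, which solves a conformal Laplace equation $L_g u=0$ on the whole manifold with prescribed asymptotics in \emph{every} end. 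With an arbitrary end $X^n$ attached, there is no Fredholm theory and no natural asymptotic condition there, so the density step (and hence the whole compactification) does not go through. In the Liouville setting the paper sidesteps this because the blown-up metric $\tilde g_\ve=(G+\ve)^{4/(n-2)}g$ is \emph{manufactured} to be conformal to a flat metric near the pole via the Green's function, which has the Kelvin expansion of Lemma~\ref{3.2}; nothing analogous is given in the hypotheses of Conjecture~\ref{PMC}. Your own flagging of the $(\lambda,\rho)$-contractibility issue for $4\le n\le 7$ is a real but secondary concern; the primary obstruction is the density/conformal-flattening step, which is exactly what the authors say they cannot carry out and why the statement remains a conjecture in this paper.
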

By ``suitable fall-off", one could seek to reproduce that of the standard positive mass theorem.\footnote{See chapter 3 of \cite{Lee}.} Lokhamp-type compactification arguments do not seem to allow reducing Conjecture \ref{PMC} to a statement about $T^n\csum X^n$, but it turns out that we do not need the full strength of Conjecture \ref{PMC} to prove Theorem \ref{SYL}. \\ \indent 
Finally, in the spirit of one of Gromov's questions \cite{G17} to look for other curvature conditions which behave like $R>0$, we show the following initial data set generalization of Theorem \ref{thm:main}.
\begin{defn}
An \emph{initial data set} is a triple $(M^n,g,k)$, where $g$ is a Riemannian metric on $M^n$, and $k$ a symmetric $2$-tensor on $M^n$. Associated to $(M^n,g,k)$ is a pair $(\mu,J)$, a scalar and a $1$-form, defined as
\[\mu\equiv R+\text{tr}_g(k)^2-|k|^2_g\:\:, \:\: J\equiv \text{div}_g(k-\text{tr}_g(k)).\]
\end{defn}
\begin{defn}
Let $X^n$ be a connected $n$-manifold without boundary (compact or not) and $(Y^n \#X^n,g,k)$ an initial data set. Let $\mathscr S$ denote a surgery sphere for the connected sum. Let $U(\mathscr S)=U_+(\mathscr S)\cup U_-(\mathscr S)$ be a collar neighborhood of $\mathscr S$, with $U_+(\mathscr S)$ pointing towards $Y^n$. We say that $(Y^n \#X^n,g,k)$ is an $\mu-|J|\geq0$ \emph{island} if $\mu-|J|\geq0(\neq 0)$ in $X^n\cup U(\mathscr S)$ and $Y^n\backslash U_+(\mathscr S)$ is flat.
\end{defn}
\begin{thm}\label{Loh}
$(T^3\#X^3,g,k)$ does not support $\mu-|J|\geq 0$-islands. 
\end{thm}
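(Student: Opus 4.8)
The plan is to convert the initial data set into a Riemannian problem via Schoen--Yau's Jang equation and then to quote Theorem~\ref{thm:main}(i). Suppose for contradiction that $(M^3,g,k)$ with $M^3=T^3\#X^3$ is a $\mu-|J|\geq 0$ island, with surgery sphere $\mathscr S$ (bounding a surgery ball $B\subset T^3$) and flat core $T^3\setminus U_+(\mathscr S)$. Composing the projection $T^3\to S^1$ with an arbitrary extension across the handle---a map $S^2\to S^1$ being null-homotopic, the restriction to $\partial B\cong S^2$ extends over $X^3\setminus B$---we obtain $f\colon M^3\to S^1$ whose generic fibre $\Sigma_0$ is an embedded flat torus lying in the flat core. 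Its class $[\Sigma_0]\in H_2(M^3)=H_2(T^3)\oplus H_2(X^3)$ is non-spherical: it generates a summand of $H_2(T^3)$, whereas the Hurewicz image of $\pi_2(M^3)$ lands in the $H_2(X^3)$-summand since $\pi_2(T^3)=0$. Thus $M^3$ is of the form $\mathcal C^3\#X^3$ with the non-spherical fibre represented away from $\mathscr S$.

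Next, solve the Jang equation $\theta^+_{\mathrm{graph}}(f)=0$ on $M$---on an exhaustion by compact domains with Dirichlet data, then extracting a subsequential limit as in \cite{SY79PMT}---and let $\widetilde M\subset M\times\R$ be the Jang graph with induced metric $\tilde g$. Schoen--Yau's computation of the scalar curvature of the Jang metric gives, for a distinguished $1$-form $q$ on $\widetilde M$ and a field $w$ with $|w|<1$,
\[
\widetilde R_{\tilde g}\;\geq\;2\bigl(\mu-J(w)\bigr)+2|q|^2_{\tilde g}-2\Div_{\tilde g}q .
\]
On the island one has $\mu-J(w)\geq\mu-|J|\geq 0$, with strict inequality on a non-empty open set; on the flat core the data is that of a flat slice, so Jang's equation admits a bounded solution there, giving $\mu\equiv 0$, $q\equiv 0$, $\widetilde R_{\tilde g}\equiv 0$. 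Hence the blow-up set $\{|f|=\infty\}$, a union of MOTS, is confined to a neighbourhood of the island, and---absorbing the finitely many cylindrical blow-up ends into $X^3$---the Jang graph is diffeomorphic to $T^3\#X'^3$ with $X'^3$ complete, connected and non-compact, still containing $\Sigma_0$ as an embedded fibre.

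Now put $\rho:=2(\mu-J(w))$, so $\rho\geq 0$ everywhere and $\rho\not\equiv 0$. Completing the square via $2|\nabla\varphi+\varphi q|^2\geq 0$ shows $-8\Delta_{\tilde g}+\bigl(\widetilde R_{\tilde g}-\tfrac12\rho\bigr)\geq -6\Delta_{\tilde g}+\tfrac12\rho\geq 0$ in the sense of quadratic forms; solving $\bigl(-8\Delta_{\tilde g}+\widetilde R_{\tilde g}-\tfrac12\rho\bigr)u=\lambda_1 u$ for $u>0$ on the exhaustion and passing to the limit produces a complete metric $\tilde g'=u^4\tilde g$ on $\widetilde M$ with $R_{\tilde g'}=u^{-4}\bigl(\lambda_1+\tfrac12\rho\bigr)\geq 0$, strictly positive on the set where $\mu>|J|$. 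But $(\widetilde M,\tilde g')\cong(\mathcal C^3\#X'^3,\tilde g')$ is then a complete manifold with $R\geq 0$, so Theorem~\ref{thm:main}(i) forces it to be Ricci-flat, hence scalar-flat---contradicting $R_{\tilde g'}>0$ somewhere.

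I expect the main obstacle to be the Jang analysis: confining the blow-up to the island so that the $\mathcal C^3$-fibre $\Sigma_0$ survives in $\widetilde M$ (this is where flatness of the core is essential, supplying a bounded solution there), together with the completeness of $\tilde g'$ on the non-compact $\widetilde M$---the conformal-metric difficulty already flagged after Theorem~\ref{thm:main}. A route that stays closer to ``MOTS in place of minimal hypersurfaces'' is to produce a stable MOTS $\Sigma$ representing $[\Sigma_0]$ directly, as a $C^\infty_{\mathrm{loc}}$-limit of MOTS in exhausting domains, and to feed the symmetrized stability inequality
\[
\int_\Sigma |\nabla\varphi|^2+\tfrac12 R_\Sigma\,\varphi^2\;\geq\;\int_\Sigma\bigl(\mu+J(\nu)+\tfrac12|\chi|^2\bigr)\varphi^2
\]
into Gauss--Bonnet: if $\Sigma$ is compact this forces $\Sigma\cong S^2$, against non-sphericity, and if $\Sigma$ is non-compact a Fischer-Colbrie--Schoen classification together with the rigidity propagation from the region $\{\mu>|J|\}$ closes the argument.
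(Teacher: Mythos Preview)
Your main proposal via Jang's equation is a natural instinct but runs into precisely the obstacles you flag at the end, and these are not merely technical. First, Jang's equation on a general non-compact $X^3$ with no asymptotic control is not known to behave: the exhaustion-plus-limit you propose needs a priori estimates that do not follow from the asymptotically flat theory, and blow-up need not be finite or compactly located (the ``island'' region \emph{is} the non-compact $X^3$ side, so confining blow-up there confines nothing; your assertion of ``finitely many cylindrical ends'' is unjustified). Second, even granting a Jang graph of the right topology, the conformal factor $u$ you build by passing to the limit of first eigenfunctions on an exhaustion need not stay bounded away from zero at infinity, so $\tilde g'=u^4\tilde g$ need not be complete---and Theorem~\ref{thm:main}(i) genuinely requires completeness. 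The paper explicitly notes (just after Theorem~\ref{thm:main}) that this conformal-completeness issue is what blocks the naive stability argument in higher dimensions; your Jang route imports the same obstruction into the three-dimensional spacetime problem.

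The paper instead follows your alternative route and works with MOTS directly, but this requires several ingredients your sketch omits. One must first \emph{manufacture barriers}: a local Lohkamp-type conformal redistribution (Lemma~\ref{lem:L1} and Proposition~\ref{prop:L2}) spends some of the strict $\mu-|J|>0$ to produce two parallel mean-convex tori $T^2_\pm$ in the flat region, and a separate deformation near $\partial X_i$ makes each exhaustion boundary outer-trapped. Eichmair's theory then yields an outermost stable MOTS $\Sigma_i\subset X_i$ homologous to $T^2_+$; crucially, these are $C$-\emph{almost minimizing} with $C$ depending only on local data, which---together with the Andersson--Metzger curvature estimate---supplies the uniform local mass and curvature bounds needed for compactness. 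The limit is only \emph{almost} embedded (MOTS can touch tangentially with opposing normals), and the paper proves a MOTS analogue of Theorem~\ref{thm:compactness} to handle this. Symmetrized stability forces each component to be $S^2$, $\Bbb C$, or $\Bbb C^*$; the cylinder is excluded by rigidity since $\mu-|J|>0$ somewhere on it. Ruling out $\Bbb C$ requires a separate topological argument, because no global area bound is available here (unlike the minimal-surface case): one slices $\Sigma_i$ along $\mathscr S$, caps the resulting circles by disks in $\mathscr S$, and observes that the pieces in $\hat Y$ have genus zero while those in $\hat X$ are null-homologous by Corollary~\ref{cor:csum3}. This contradicts $[\Sigma_i]=[T^2_+]$.
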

The proof of Theorem \ref{Loh} uses a deformation argument of Lokhamp \cite{L16} and is modelled on that of Theorem \ref{thm:main} except that marginally outer trapped surfaces (MOTS) take the role of minimal hypersurfaces.\\ \\
\textbf{Acknowledgements} We thank O. Chodosh and C. Li for discussions concerning their work. We thank C. Sormani for organizing the 2020 Virtual Workshop on Scalar and Ricci curvature, during which this work was presented and benefited from questions of the participants. M. Lesourd acknowledges the Gordon and Betty Moore Foundation and Harvard's Black Hole Initiative, R. Unger acknowledges Thomas Massoni and Maggie Miller, and S-T. Yau acknowledges the the support of NSF Grant DMS-1607871.

\section{Proof of Theorem \ref{thm:main}}

\subsection{Warm-up.} Recall the following very classical result. 

\begin{prop}
The torus $T^2$ does not admit a metric of positive scalar curvature.
\end{prop}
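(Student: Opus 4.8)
The plan is to reduce everything to the Gauss--Bonnet theorem. First I would recall the dimensional coincidence that on any surface the scalar curvature satisfies $R = 2K$, where $K$ is the Gauss curvature; thus a metric on $T^2$ with $R>0$ is precisely a metric with $K>0$ everywhere. Second, I would invoke Gauss--Bonnet: for any smooth metric on a closed oriented surface $\Sigma$, $\int_\Sigma K\,dA = 2\pi\chi(\Sigma)$. Applying this with $\Sigma = T^2$ and using $\chi(T^2)=0$ gives $\int_{T^2} K\,dA = 0$, which is incompatible with $K>0$ pointwise on a compact surface (the integral of a positive continuous function over a compact set is strictly positive). This contradiction finishes the proof.

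Alternatively — and in a way that foreshadows the minimal-hypersurface arguments used for Theorem \ref{thm:main} — I would run the Schoen--Yau descent in its lowest-dimensional instance. Since $\pi_1(T^2)\neq 0$, one picks a nontrivial free homotopy class of loops and, by compactness of $T^2$, finds a length-minimizing representative $\gamma$, which is a closed geodesic and in particular a stable minimal hypersurface. Stability gives $\int_\gamma |\nabla f|^2 \geq \int_\gamma \bigl(\Ric(\nu,\nu) + |A|^2\bigr) f^2$ for every $f\in C^\infty(\gamma)$; because $\gamma$ is closed, the test function $f\equiv 1$ is admissible and yields $\int_\gamma \Ric(\nu,\nu) \leq 0$. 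But along a curve in a surface $\Ric(\nu,\nu) = K = R/2 > 0$, so $\int_\gamma \Ric(\nu,\nu) > 0$, a contradiction.

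The ``main obstacle'' here is really just bookkeeping rather than mathematics: in the Gauss--Bonnet route one only needs the identity $R = 2K$ and the topological input $\chi(T^2)=0$, both entirely standard; in the variational route the only nontrivial ingredient is the existence of a stable minimal geodesic in a prescribed homotopy class, which is immediate in dimension two (compactness plus $\pi_1 \neq 0$) but is precisely the point that becomes delicate — and noncompact — in higher dimensions. I would present the Gauss--Bonnet proof for brevity, and remark that the stability computation is the template generalized in the rest of the paper.
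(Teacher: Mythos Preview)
Your proposal is correct and matches the paper exactly: the paper notes that Gauss--Bonnet immediately gives the result, and then presents the Cartan--Synge/second-variation argument (minimize length in a nontrivial free homotopy class, observe the resulting closed geodesic is unstable in positive curvature) as the proof that motivates the minimal-hypersurface technique. Your write-up of both arguments, including the remark that the variational one is the template for the rest of the paper, is precisely in the spirit of what the authors do.
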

This can easily be proved using the Gauss--Bonnet theorem. However, following Synge and Cartan, we can argue as follows. The torus contains a nontrivial free homotopy class of curves $\mathcal L$. Using the Arzela--Ascoli theorem and basic results on geodesics, we may find a closed geodesic $\gamma\in\mathcal L$ of minimal length. The second variation formula implies this geodesic is unstable because of positive curvature, which is a contradiction. This technique motivated the minimal hypersurface approach to scalar curvature. 

The compactness of the torus is used in two places. Firstly, to apply the Arzela--Ascoli theorem to maps $S^1\to T^2$ and secondly, to ensure the positivity of the convexity radius of $(T^2,g)$. However, the Cartan--Synge argument goes through if we know that the ``relevant" curves all lie in some large ball. 

\begin{thm}\label{thm:punctured_torus} Let $X^2$ be any surface without boundary. Then
$T^2\# X^2$ does not admit a complete metric of positive scalar curvature. 
\end{thm}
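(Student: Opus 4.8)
The plan is to make the Cartan--Synge argument sketched above for $T^2$ go through verbatim once we confine the relevant length-minimizing sequence to a fixed compact set; completeness (via Hopf--Rinow) together with one homological fact will supply exactly this confinement. Write $T^2\#X^2=(T^2\setminus D)\cup_{S^1}(X^2\setminus D')$ for a small disk $D\subset T^2$, so that the two standard generators $a,b$ of $\pi_1(T^2)$ may be taken to lie in the compact piece $T^2\setminus D$ and to meet transversally in a single point of $T^2\setminus D$. The signed intersection number $[\gamma]\cdot[b]$ of an arbitrary loop $\gamma$ with the fixed compact loop $b$ depends only on $[\gamma]\in H_1(T^2\#X^2)$ (it vanishes whenever $\gamma$ bounds a $2$-chain, since the boundary of a $1$-chain has vanishing signed count), and it is computed by a local count near that one transversal point, hence equals $a\cdot b=\pm1$ whenever $[\gamma]=[a]$. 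In particular $[a]\neq0$, so the free homotopy class $\mathcal L$ of $a$ is nontrivial, and every $\gamma\in\mathcal L$ meets $b$.

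Now let $L=\inf_{\gamma\in\mathcal L}\operatorname{length}(\gamma)$, which is finite since $a$ represents $\mathcal L$, and pick $\gamma_i\in\mathcal L$ with $\operatorname{length}(\gamma_i)\le L+1$. Since each $\gamma_i$ meets the compact set $b\subseteq\overline{B_{r_0}(p_0)}$, every point of $\gamma_i$ lies within distance $r_0+\tfrac12(L+1)$ of $p_0$, so all the $\gamma_i$ lie in $\mathcal K:=\overline{B_{r_0+(L+1)/2}(p_0)}$, which is compact because $g$ is complete. The injectivity radius is bounded below on $\mathcal K$ by some $\iota>0$; were $L<\iota$, then for large $i$ the loop $\gamma_i\subset\mathcal K$ would have length $<\iota$, hence lie in a single geodesic ball and be nullhomotopic, contradicting $\gamma_i\in\mathcal L$. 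Thus $L\ge\iota>0$. Parametrizing the $\gamma_i$ on $[0,1]$ proportionally to arclength gives a uniformly Lipschitz family valued in the compact set $\mathcal K$, so by Arzela--Ascoli a subsequence converges in $C^0$ to a loop $\gamma_\infty$. Lower semicontinuity of length gives $\operatorname{length}(\gamma_\infty)\le L$, while the injectivity-radius bound forces $\gamma_i$ to be homotopic to $\gamma_\infty$ for large $i$, so $\gamma_\infty\in\mathcal L$ and hence $\operatorname{length}(\gamma_\infty)=L>0$. Therefore $\gamma_\infty$ is length-minimizing in $\mathcal L$, and the usual first-variation argument upgrades it to a smoothly closed geodesic.

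To conclude, since $T^2\#X^2$ is orientable we rotate the unit tangent of $\gamma_\infty$ by $\tfrac{\pi}{2}$ to obtain a parallel unit normal field $N$ along $\gamma_\infty$ (so $\nabla_{\dot\gamma_\infty}N=0$), and consider the variation $\gamma_t=\exp_{\gamma_\infty}(tN)$. For $|t|$ small these loops remain in $\mathcal L$, so $\operatorname{length}(\gamma_t)\ge L=\operatorname{length}(\gamma_0)$, forcing $\tfrac{d^2}{dt^2}\big|_{t=0}\operatorname{length}(\gamma_t)\ge0$. On the other hand, using $R=2K$ in dimension $2$, the second variation formula gives
\[\frac{d^2}{dt^2}\bigg|_{t=0}\operatorname{length}(\gamma_t)=\int_{\gamma_\infty}\big(|\nabla_{\dot\gamma_\infty}N|^2-K\,|N|^2\big)\,ds=-\int_{\gamma_\infty}K\,ds=-\tfrac12\int_{\gamma_\infty}R\,ds<0,\]
a contradiction, which proves the theorem.

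The step I expect to be the crux is the confinement of the minimizing sequence to $\mathcal K$: a priori a sequence of loops in $\mathcal L$ with lengths tending to $L$ could escape every compact set through an end of $X^2$, and it is precisely here that completeness of the metric (Hopf--Rinow) and the fact that the connected sum is performed in a compact region — so the fixed compact curve $b$ still detects $\mathcal L$ homologically — both enter. Everything before that step is soft topology and everything after it is the classical Synge computation; this is the toy model for the noncompact minimal-hypersurface arguments later in the paper, where $(\lambda,\rho)$-contractibility takes over the confining role in dimensions $4\le n\le7$.
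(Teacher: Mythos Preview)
Your proof is correct and follows essentially the same Cartan--Synge strategy as the paper: anchor the length-minimizing sequence to a fixed compact set by a topological argument, invoke completeness (Hopf--Rinow) to trap the sequence in a compact ball, extract a minimizing closed geodesic, and derive a contradiction from second variation. The only cosmetic difference is the anchoring device---you use the homological intersection number with the second generator $b$ to force every $\gamma\in\mathcal L$ to meet the compact curve $b$, whereas the paper anchors to the compact piece $K=T^2\setminus D$ by observing that a curve disjoint from $K$ would be nullhomotopic---and you supply more of the routine details (positivity of $L$ via the injectivity radius, Arzel\`a--Ascoli, the explicit second-variation computation with a parallel normal) that the paper leaves to the reader.
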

\begin{proof}
Consider a free homotopy class $\mathcal L$ corresponding to going once around one of the circular factors in $T^2$. Let $C\subset T^2\# X^2$ be a ``surgery circle" associated to the connected sum. Then $C\notin \mathcal L$. In $T^2$, $C$ bounds an open disk. Call the complement of this disk $K\subset X^2\csum T^2$.


Let $\{\gamma_i\}\subset \mathcal L$ be a length-minimizing sequence of smooth curves. For each $i$, $\gamma_i\cap K\ne\emptyset$ for otherwise $\gamma_i$ would be nullhomotopic in $T^2$, contradicting the choice of $\mathcal L$. Let $\ell=\sup_iL(\gamma_i)<\infty$. Then by the triangle inequality, $\gamma_i$ is eventually contained in the $2\ell$-neighborhood of $K$. Since $g$ is complete, this neighborhood is relatively compact. 

Now a subsequence of $\{\gamma_i\}$ tends to a closed length-minimizing geodesic which can be shown to be unstable, and we have a contradiction.
\end{proof}

The topology of $X^2\csum T^2$ prevents the minimizing sequence from ``escaping off to infinity" -- the compact set $K$ \emph{anchors} $\mathcal L$. In fact, the completeness of $g$ prevented the curves from entering the noncompact end at all. This is much stronger than what can be expected for minimal hypersurfaces and in general we will have degeneration of topology at infinity.

\subsection{Homological anchoring}

In this section we set up the inductive machinery to prove Theorem 1.3. To use techniques of geometric measure theory, we will need to interpret homology groups in terms of flat chains.
We use the notation and terminology  of \cite{Federer}. Let $(A,B)$ be a pair of Lipschitz neighborhood retracts in $\Bbb R^L$ with $B\subset A$. Letting $\mathcal F_k$ denote the group of integral flat chains in $\Bbb R^L$ we define the \emph{integral cycles}
\[\mathcal Z_k(A,B)=\{T\in \mathcal F_k:\spt T\subset A, \spt\partial T\subset B\},\]
the \emph{integral boundaries}
\[\mathcal B_k(A,B)=\{T+\partial S\in \mathcal F_k+\partial\mathcal F_{k+1}:\spt T\subset B,\spt S\subset A\},\]
and the \emph{integral homology groups}
\[H_k(A,B)=\mathcal Z_k(A,B)/\mathcal B_k(A,B).\]

When $B=\emptyset$ we just write $\mathcal Z_k(A)$, etc. By Nash's theorem any Riemannian manifold is a Lipschitz retract. 

Note that the Mayer--Vietoris sequence holds for currents because current homology satisfies the Eilenberg--Steenrod axioms (see \cite[4.4.1]{Federer} and \cite[p.\ 161-162]{Hatcher}). 

\begin{defn}
Let $K\subset A$ be a compact set. A class $a\in H_k(A,B)$ is said to be \emph{homologically anchored} in $K$ if it can be represented by a chain with support in $K$ and cannot be represented by a chain whose support is disjoint from $K$. (Note that a flat chain always has compact support.)
\end{defn}

\begin{ex}
The homologically nontrivial ``central sphere" in the cylinder $S^{n-1}\times\Bbb R$ is not anchored in any set since it can be pushed off to infinity by an isotopy.
\end{ex}

\begin{ex} Let $X^n$ and $Y^n$ be smooth manifolds. To construct the connected sum $M=X\csum Y$ we remove a ball from $X$ and a ball from $Y$ and glue along the (now common) $(n-1)$-sphere $\mathscr S$. We let $\hat X$ denote the part of $X$ in $M$ together with a tubular neighborhood of $\mathscr S$ and similarly for $\hat Y$. 
\end{ex}

\begin{prop}\label{prop:csum2}
	Let $Y^n$ be a closed, orientable manifold and $X^n$ any orientable manifold. Then for $1\le i\le n-1$ there is an isomorphism of homology groups
	\begin{equation}H_i(X)\oplus H_i(Y)\cong H_i(M),\label{eqn:isom}\end{equation}
	where $M=X\csum Y$. 
\end{prop}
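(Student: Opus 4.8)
The plan is to compute both sides by Mayer--Vietoris, using that current homology satisfies the Eilenberg--Steenrod axioms (as noted earlier in this section) and therefore behaves like ordinary singular homology on the Lipschitz neighborhood retracts that appear — in particular, it enjoys excision and the Mayer--Vietoris sequence. I would use the connected-sum decomposition $M=\hat X\cup\hat Y$ described above. Since $\hat X\simeq X_0:=X\setminus\{p\}$, $\hat Y\simeq Y_0:=Y\setminus\{q\}$, and $\hat X\cap\hat Y\simeq\mathscr S\cong S^{n-1}$, it is enough to understand (i) how deleting a point changes the homology of an $n$-manifold, and (ii) the resulting Mayer--Vietoris sequence.

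For (i), let $Z_0:=Z\setminus\{p\}$ for a connected $n$-manifold $Z$. By excision the local homology group $H_i(Z,Z_0)$ vanishes except in degree $n$, where it is $\mathbb Z$, so the long exact sequence of the pair gives $H_i(Z_0)\cong H_i(Z)$ for $1\le i\le n-2$ and, in the top range, an exact sequence
\[
H_n(Z)\longrightarrow \mathbb Z \xrightarrow{\ \iota_Z\ } H_{n-1}(Z_0)\longrightarrow H_{n-1}(Z)\longrightarrow 0,
\]
where $\iota_Z$ is, up to sign, the map induced by the inclusion $\mathscr S\hookrightarrow Z_0$ (identifying $\mathbb Z=H_{n-1}(\mathscr S)$); thus $H_{n-1}(Z)\cong\operatorname{coker}\iota_Z$ in all cases. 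Now I specialize to $Y$: since $Y$ is \emph{closed} and orientable, $[Y]\in H_n(Y)$ maps onto a generator of the local homology $H_n(Y,Y_0)\cong\mathbb Z$, so the connecting homomorphism $\mathbb Z\to H_{n-1}(Y_0)$ — which is precisely $\iota_Y$ — vanishes. Hence $\iota_Y=0$, and in particular $H_{n-1}(Y_0)\cong H_{n-1}(Y)$.

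For (ii), the reduced Mayer--Vietoris sequence of $M=\hat X\cup\hat Y$ reads
\[
\cdots\to\tilde H_i(\mathscr S)\to \tilde H_i(X_0)\oplus\tilde H_i(Y_0)\to\tilde H_i(M)\to\tilde H_{i-1}(\mathscr S)\to\cdots.
\]
Since $\tilde H_j(S^{n-1})=0$ for $j\neq n-1$, for $1\le i\le n-2$ both outer terms vanish and, by (i), $H_i(M)\cong H_i(X_0)\oplus H_i(Y_0)\cong H_i(X)\oplus H_i(Y)$. For $i=n-1$ one has $\tilde H_{n-2}(\mathscr S)=0$, so $H_{n-1}(M)\cong\operatorname{coker}\big(\phi\colon \tilde H_{n-1}(\mathscr S)\to H_{n-1}(X_0)\oplus H_{n-1}(Y_0)\big)$, and the two components of $\phi$ are $\pm\iota_X$ and $\pm\iota_Y=0$. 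Therefore $\operatorname{coker}\phi\cong\operatorname{coker}\iota_X\oplus H_{n-1}(Y_0)\cong H_{n-1}(X)\oplus H_{n-1}(Y)$. Passing from reduced to unreduced homology only changes degree $0$, which lies outside the asserted range, so \eqref{eqn:isom} holds for all $1\le i\le n-1$.

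The one genuinely delicate point — and the reason the isomorphism stops at degree $n-1$ rather than $n$ — is the behavior of $H_{n-1}$: when $X$ is noncompact, $H_{n-1}(X_0)$ is strictly larger than $H_{n-1}(X)$, the discrepancy being the class $[\mathscr S]$, and one must check that this extra class dies in $H_{n-1}(M)$. This is exactly what the hypothesis that $Y$ is closed delivers, via $\iota_Y=0$: it forces $\phi$ to land in the $X_0$-summand alone, so that $\operatorname{coker}\phi$ splits honestly as a direct sum instead of acquiring a diagonal identification of the two factors. Everything else is a routine diagram chase.
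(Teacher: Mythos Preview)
Your proof is correct and uses the same Mayer--Vietoris decomposition as the paper. The organization is slightly different: the paper splits the $i=n-1$ case according to whether $X$ is compact or not, in the compact case verifying that the connecting map $\delta\colon H_n(M)\to H_{n-1}(\mathscr S)$ is an isomorphism, and in the noncompact case computing $H_{n-1}(\hat X)/H_{n-1}(\mathscr S)$ via a second Mayer--Vietoris sequence. Your observation that $\iota_Y=0$ (because the fundamental class of the closed orientable $Y$ hits the local generator) lets you compute $\operatorname{coker}\phi$ directly as $\operatorname{coker}\iota_X\oplus H_{n-1}(Y_0)$ and then invoke $H_{n-1}(X)\cong\operatorname{coker}\iota_X$, which holds regardless of whether $X$ is compact. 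This handles both cases at once and is a little cleaner, though the underlying content is the same.
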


This is a classic exercise in algebraic topology but for geometric reasons we are interested in the maps involved.


\begin{cor}\label{cor:csum3}
	Suppose $a\in H_i(M)$ comes from a class in $H_i(Y)$ under the isomorphism \eqref{eqn:isom} and can be written as a sum of integral \textbf{cycles} in $M$, i.e. 
	\[A\sim A_1+A_2,\quad A_1,A_2\in  \mathcal{Z}_i(M),\]
	with the property that $\spt A_1\subset \hat X$ and $\spt A_2\subset \hat Y$. Then $A_2$ is a boundary in $M$. 
\end{cor}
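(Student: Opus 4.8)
The plan is to make the isomorphism \eqref{eqn:isom} completely explicit -- which is exactly the point of the remark following Proposition~\ref{prop:csum2} -- and then read off the corollary as an immediate consequence. Write $M=\hat X\cup\hat Y$, with $\hat X\cap\hat Y$ a tube around $\mathscr S$, hence homotopy equivalent to $S^{n-1}$. The Mayer--Vietoris sequence of this cover (legitimate for integral currents, since current homology satisfies the Eilenberg--Steenrod axioms and $\hat X,\hat Y$ are Lipschitz retracts by Nash) shows, for $1\le i\le n-1$, that
\[
(j_{\hat X})_*+(j_{\hat Y})_*\colon H_i(\hat X)\oplus H_i(\hat Y)\longrightarrow H_i(M)
\]
is an isomorphism: the flanking terms $H_i(S^{n-1}),H_{i-1}(S^{n-1})$ vanish for $2\le i\le n-2$, and the endpoints $i=1,n-1$ are dealt with by the local-homology argument of Proposition~\ref{prop:csum2}. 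Composing with the excision isomorphisms $H_i(\hat X)\cong H_i(X)$, $H_i(\hat Y)\cong H_i(Y)$ gives \eqref{eqn:isom}, and we record the essential point: under this identification, $\Phi(b_X,b_Y)$ is represented by $(j_{\hat X})_*\tilde b_X+(j_{\hat Y})_*\tilde b_Y$, where $\tilde b_X\in H_i(\hat X)$ and $\tilde b_Y\in H_i(\hat Y)$ correspond to $b_X,b_Y$.

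With this in hand the corollary is one line. Since $\spt A_1\subset\hat X$ and $\spt A_2\subset\hat Y$, the cycles $A_1,A_2$ define classes $[A_1]_{\hat X}\in H_i(\hat X)$ and $[A_2]_{\hat Y}\in H_i(\hat Y)$, and functoriality of the inclusions gives
\[
a=[A_1+A_2]=(j_{\hat X})_*[A_1]_{\hat X}+(j_{\hat Y})_*[A_2]_{\hat Y}=\Phi\big([A_1]_{\hat X},[A_2]_{\hat Y}\big)
\]
(identifying $H_i(\hat X)\cong H_i(X)$, $H_i(\hat Y)\cong H_i(Y)$). Thus $\Phi^{-1}(a)$ is exactly the pair $\big([A_1]_{\hat X},[A_2]_{\hat Y}\big)$. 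Hence, if $a$ comes from the $H_i(X)$-summand, i.e. $\Phi^{-1}(a)\in H_i(X)\oplus 0$, then $[A_2]_{\hat Y}=0$, meaning $A_2=\partial S$ with $\spt S\subset\hat Y$; in particular $A_2$ is a boundary in $M$ (and $[A_1]=a$). Symmetrically, if $a$ comes from the $H_i(Y)$-summand then $[A_1]_{\hat X}=0$ and the piece $A_1\subset\hat X$ is the boundary -- so in all cases the component of the cycle lying in the summand that $a$ does not come from bounds in $M$. Equivalently, one may argue with the ``pinch'' maps $p_X\colon M\to X$, $p_Y\colon M\to Y$ (collapse the complementary summand and cap off): $p_X$ carries $\hat Y$ into a contractible set and $p_Y$ carries $\hat X$ into a contractible set, so $(p_X)_*[A_2]=(p_Y)_*[A_1]=0$, and the hypothesis on $a$ kills the one remaining component.

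There is no genuine obstacle here; the statement is purely homotopy-theoretic. The only things demanding care are (a) pinning down the explicit form of \eqref{eqn:isom} -- equivalently, recording which summand a cycle supported in $\hat X$ (resp.\ $\hat Y$) represents -- and (b) checking that the whole discussion is valid in the integral-flat-chain category of \cite{Federer} and at the endpoint degrees $i\in\{1,n-1\}$; both are routine and are already implicit in the proof of Proposition~\ref{prop:csum2}.
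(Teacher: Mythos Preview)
Your approach is the same as the paper's in spirit---read the decomposition off the explicit Mayer--Vietoris description of \eqref{eqn:isom}---but there is a real gap in the case $i=n-1$ with $X$ noncompact, which is exactly the case that matters for the applications.

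You assert that $(j_{\hat X})_*+(j_{\hat Y})_*\colon H_i(\hat X)\oplus H_i(\hat Y)\to H_i(M)$ is an isomorphism for all $1\le i\le n-1$, and that $H_i(\hat X)\cong H_i(X)$ by ``excision.'' Neither holds when $i=n-1$ and $X$ is noncompact. There the Mayer--Vietoris sequence reads
\[0\to H_{n-1}(\mathscr S)\to H_{n-1}(\hat X)\oplus H_{n-1}(\hat Y)\to H_{n-1}(M)\to 0,\]
so the map you invoke has kernel generated by $[\mathscr S]$, and the inclusion $\hat X\hookrightarrow X$ induces only a surjection on $H_{n-1}$ (think $\Bbb R^n\setminus\{*\}\subset\Bbb R^n$). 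Your conclusion ``$[A_1]_{\hat X}=0$'' is therefore too strong: from the hypothesis one can only deduce that $[A_1]_{\hat X}$ lies in the image of $H_{n-1}(\mathscr S)\to H_{n-1}(\hat X)$. This is precisely what the paper's proof establishes, and it then invokes \eqref{eqn:savior} to conclude $[A_1]_M=0$ (since $\mathscr S$ bounds in $M$). That last step is the content you are missing; it is not subsumed by ``the local-homology argument of Proposition~\ref{prop:csum2},'' whose proof in fact \emph{shows} the map fails to be an isomorphism here. Your pinch-map sketch could be made to work cleanly across all cases---both $(p_X)_*$ and $(p_Y)_*$ kill $[A_1]$, and together they invert \eqref{eqn:isom}---but that compatibility needs to be verified rather than asserted.

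(A side remark: as printed the corollary's conclusion names $A_2$, but the paper's own proof and every later use of the corollary show it is $A_1$, the piece in $\hat X$, that bounds; your symmetric treatment already covers this.)
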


When $A_2=0$ we obtain that homology classes coming from $Y$ are anchored in $\hat Y$.

\begin{proof}[Proof of Proposition \ref{prop:csum2}]
	Since $\hat X\cap \hat Y$ deformation retracts onto the central $(n-1)$-sphere $\mathscr S$, the reduced Mayer--Vietoris sequence for $\hat X $ and $\hat Y$ gives 
	\[0\to H_i(\hat X)\oplus H_i(\hat Y)\stackrel{\sim}{\to} H_i(M)\to 0\]
	for $1\le i\le n-2$. Using the long exact sequences of the pairs $(X,\hat X)$ and $(Y,\hat Y)$, one can easily see that there are isomorphisms $H_i(\hat X)\stackrel{\sim}{\to} H_i(X)$ and $H_i(\hat Y)\stackrel{\sim}{\to} H_i(Y)$, again for $1\le i\le n-2$, induced by the inclusions. 
	
	We now look at the case when $i=n-1$ and $X$ is compact. Then the relevant piece of the Mayer--Vietoris sequence reads 
	\[0\to H_n(\hat X)\oplus H_n(\hat Y)\to H_n(M)\stackrel{\delta}{\to}H_{n-1}(\mathscr S)\to H_{n-1}(\hat X)\oplus H_{n-1}(\hat Y)\to H_{n-1}(M)\to 0.\]  
	Since $\hat X$ and $\hat Y$ are noncompact and $M$ is compact, it can be simplified to 
	\[0\to \Bbb Z\stackrel{\delta}{\to} \Bbb Z\to H_{n-1}(\hat X)\oplus H_{n-1}(\hat Y)\to H_{n-1}(M)\to 0.\] 
	The connecting homomorphism $\delta:\Bbb Z\to\Bbb Z$ is injective by exactness so we just have to check it is surjective. But it is easy to realize $\current{\mathscr S}$ as the common boundary (up to sign) of a chain in $\hat X$ and a chain in $\hat Y$ whose sum is $ \current{M} $.  Next we show that $H_{n-1}(\hat X)\to H_{n-1}(X)$ is an isomorphism. The long exact sequence of the pair $(X,\hat X)$ reads (after making the proper evaluations)
	\[0\to H_n(X)\to H_n(X,\hat X)\to H_{n-1}(\hat X)\to H_{n-1}(X)\to 0.\]
	But since $X$ is compact orientable and $\hat X$ is homotopy equivalent to $X\setminus\{*\}$, $H_n(X)\to H_n(X,\hat X)$ is an isomorphism \cite[Theorem 3.26 (a)]{Hatcher}. It follows from exactness that $H_{n-1}(\hat X)\stackrel{\sim}{\to}H_{n-1}(X)$. The same argument works for $Y$.
	
	Finally, we consider the case $i=n-1$ and $X$ is noncompact. Now the Mayer--Vietoris sequence simply reads 
	\begin{equation}0\to H_{n-1}(\mathscr S)\to H_{n-1}(\hat X)\oplus H_{n-1}(\hat Y)\to H_{n-1}(M)\to 0\label{eqn:MV2}\end{equation}
	since $M$ is noncompact. The map $ H_{n-1}(\mathscr S)\to H_{n-1}(\hat X)\oplus H_{n-1}(\hat Y)$ is injective because $\mathscr S$ is not a boundary in $\hat X$, but will of course be a boundary in $H_{n-1}(M)$. Now $H_{n-1}(\hat Y)\to H_{n-1}(Y)$ is still an isomorphism but $H_{n-1}(\hat X)\to H_{n-1}(X)$ is merely surjective
	 (for example, $\Bbb R^n\setminus\{0\}\subset \Bbb R^n$). Look at the Mayer--Vietoris sequence for the covering of $X$ by $\hat X$ and the $n$-ball removed: 
	 \[0\to H_{n-1}(\mathscr S)\to H_{n-1}(\hat X)\to H_{n-1}(X)\to 0,\]
	 where we noted that $H_{n-1}(B^n)=0$. So we have 
	 \[H_{n-1}(X)\cong H_{n-1}(\hat X)/H_{n-1}(\mathscr S)\]
	 and $(*)$ gives 
	 \begin{equation}H_{n-1}(M)\cong (H_{n-1}(\hat X)/H_{n-1}(\mathscr S))\oplus H_{n-1}(\hat Y),\label{eqn:savior}\end{equation}
	where the injections $ H_{n-1}(\mathscr S)\to H_{n-1}(\hat X)$ are the same in both cases. By combining these isomorphisms, we obtain the claim. 
\end{proof}

\begin{proof}[Proof of Corollary \ref{cor:csum3}]
	The class $[A-A_1-A_2]_M=0$ is the image of $([A_1]_X,[A-A_2]_Y)$ under the isomorphism \eqref{eqn:isom}. So $[A]_Y=[A_2]_Y$ and $A_1$ is a boundary in $X$. If $X$ is compact or $1\le i\le n-2$ then $A_1$ is a boundary in $\hat X$ and hence also in $M$. If $X$ is noncompact and $i=n-1$ then
	\[[A_1]_{\hat X}\in \mathrm{image}\left(H_{n-1}(\mathscr S)\to H_{n-1}(\hat X)\right).\]
	But then by \eqref{eqn:savior}, $[A_1]_M=0$.  
\end{proof}


Using our simple homological argument, we can show that the $\mathcal{C}^n$ condition is in a certain sense ``contagious" with respect to connected sums. Note below that we will use $\mathcal{C}'^n$ to denote a class of manifolds having the property that a homology class in $H_2(M^n,\Bbb Z)$ cannot be represented by a sum of embedded $2$-spheres. 




\begin{defn}
A smooth manifold $M^n$ is said to be in class $\mathcal C'^{n}$ if it contains anchored classes $a_1,\dotsc, a_{n-2}\in H_{n-1}(M)$ whose intersection in $H_2(M)$ cannot be represented by a sum of multiples of embedded spheres. 
\end{defn}

\begin{rk} If $M^n$ is compact, then 
$\mathcal{C}^n$ is equivalent to $\mathcal{C}'^n$ for $n=3$ and $n\ge 5$. Indeed, $\mathcal C^n\implies \mathcal C'^n$ in all dimensions since a map $f:M\to T^{n-2}$ is given by $n-2$ maps $M\to S^1$, each of which give rise to $H_{n-1}$ classes by duality. Embedded spheres can be tubed together to give a map from a single sphere lying in the same homology class. Higher multiplicity spheres can also be represented by branched covers over a map from a single sphere.  

We can also see that $\mathcal C'^n\implies \mathcal C^n$ when $n\ne 4$. When $n\ge 5$, this is because a map from $S^2$ can be perturbed to be an embedding by transversality. For $n=3$, we use the fact that $\pi_2(M)$ is generated by the embedded spheres in the prime decomposition as a $\pi_1$-module \cite[Proposition 3.12]{Hatcher3M}. It follows that homology classes in the image of the Hurewicz map are represented by sums of multiples of these spheres.  

However, since the sphere theorem fails for $4$-manifolds (desingularizing a double point adds genus), it is not obvious to the present authors that $\mathcal C'^4\implies \mathcal C^4$. 
\end{rk}

\begin{prop}\label{contagious} Let $Y^n$ be a compact $\mathcal C^n$ manifold and set $M^n=Y^n \csum X^n$. Then $M^n$ is of class $\mathcal C'^n$.




\end{prop}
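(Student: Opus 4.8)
The plan is to transfer the $\mathcal{C}^n$ structure of $Y^n$ across the connected sum using the homological anchoring machinery of the previous subsection. Since $Y^n$ is $\mathcal{C}^n$, there is a smooth map $f:Y^n\to T^{n-2}$ whose generic fiber represents a non-spherical class in $H_2(Y)$. As observed in the remark, such an $f$ is the same as an $(n-2)$-tuple of maps $Y\to S^1$, and by Poincar\'e duality each of these is dual to a class $b_j\in H_{n-1}(Y)$, $j=1,\dots,n-2$, whose iterated intersection product is exactly $[f^{-1}(\mathrm{pt})]\in H_2(Y)$. Via the isomorphism \eqref{eqn:isom} of Proposition \ref{prop:csum2} we obtain classes $a_j\in H_{n-1}(M)$ coming from the $b_j$, and the first task is to check that each $a_j$ is homologically anchored in $\hat Y$. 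This is precisely the content of Corollary \ref{cor:csum3} in the case $A_2=0$: a class coming from $H_{n-1}(Y)$ is anchored in $\hat Y$, since any representative decomposed as $A_1+A_2$ with $\spt A_1\subset\hat X$, $\spt A_2\subset\hat Y$ must have $A_1$ a boundary in $M$, hence the class is represented by $A_2$, which lies in $\hat Y$ and is nonzero in $H_{n-1}(M)$.

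Next I would verify that the intersection $a_1\cap\cdots\cap a_{n-2}\in H_2(M)$ is not a sum of multiples of embedded spheres, which is the defining property of $\mathcal{C}'^n$. The intersection form is natural with respect to the inclusion $\hat Y\hookrightarrow M$ on the summands coming from $Y$: since the $a_j$ are all supported (up to boundaries) in $\hat Y$ and $\hat Y$ is homotopy equivalent to $Y\setminus\{*\}$, their intersection product can be computed inside $\hat Y$ and pushed forward, so it equals the image of $b_1\cap\cdots\cap b_{n-2}=[f^{-1}(\mathrm{pt})]$ under $H_2(Y)\to H_2(M)$, which under \eqref{eqn:isom} is injective on the $H_2(Y)$ summand. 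Now suppose for contradiction that this class in $H_2(M)$ were represented by a sum of multiples of embedded $2$-spheres $\sum m_k[S_k]$. Each sphere $S_k$, being $2$-dimensional and hence (for $n\ge 3$) genuinely spherical, defines a class in the image of the Hurewicz map $\pi_2(M)\to H_2(M)$. Under the splitting $\pi_2(M)\cong\pi_2(X)\oplus\pi_2(Y)$ (more precisely via the covering-space / van Kampen description, keeping track of the $\pi_1$-module structure as in \cite[Proposition 3.12]{Hatcher3M}), the $H_2(Y)$-component of a spherical class in $H_2(M)$ lies in the image of the Hurewicz map of $Y$. Hence the $H_2(Y)$-component of $\sum m_k[S_k]$ — which is $[f^{-1}(\mathrm{pt})]$ — would be spherical in $Y$, contradicting the hypothesis that $Y$ is $\mathcal{C}^n$.

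Putting these two steps together, the $a_j$ are anchored classes in $H_{n-1}(M)$ whose intersection in $H_2(M)$ is not a sum of multiples of embedded spheres, so $M^n=Y^n\csum X^n$ is of class $\mathcal{C}'^n$. The main obstacle I anticipate is the bookkeeping in the second step: one must be careful about exactly which part of $\pi_2(M)$ or $H_2(M)$ receives the $Y$-contribution, and about the compatibility of the intersection product with the Mayer--Vietoris splitting \eqref{eqn:isom} (in the top degree $i=n-1$ this splitting is the slightly delicate \eqref{eqn:savior} when $X$ is noncompact, though here $Y$ is the compact summand so the relevant injectivity statement is the clean one). The claim that "spherical in $M$ implies the $Y$-component is spherical in $Y$" needs the $\pi_1$-module refinement — a naive $\pi_2(M)=\pi_2(X)\oplus\pi_2(Y)$ is false — and that is where I would spend the most care, invoking \cite[Proposition 3.12]{Hatcher3M} for the structure of $\pi_2$ of a connected sum. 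Everything else is a direct application of Proposition \ref{prop:csum2} and Corollary \ref{cor:csum3}.
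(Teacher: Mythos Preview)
Your setup matches the paper: produce $b_j\in H_{n-1}(Y)$ from the map to $T^{n-2}$, push them to anchored classes $a_j\in H_{n-1}(M)$ via \eqref{eqn:isom} and Corollary~\ref{cor:csum3}, and compute the intersection by choosing transverse hypersurface representatives inside $\hat Y$. The divergence is only in the final step, and there your justification has a real gap. You want ``spherical in $M$ implies the $H_2(Y)$-component is spherical in $Y$,'' and you propose to extract this from the structure of $\pi_2(M)$ as a $\pi_1$-module, citing \cite[Proposition~3.12]{Hatcher3M}. That result is specific to $3$-manifolds (it relies on the sphere theorem and prime decomposition) and says nothing about $\pi_2$ of a connected sum in dimensions $n\ge 4$; the covering-space description of $\pi_2(X\csum Y)$ you allude to is genuinely complicated and not what you need here.

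The paper avoids $\pi_2(M)$ entirely by a direct cut-and-paste. Given embedded $2$-spheres in $M$ representing the intersection class, perturb them transverse to $\mathscr S$; each sphere meets $\mathscr S$ in circles, which bound (maps of) disks in $\mathscr S$ since $H_1(\mathscr S)=0$. Adding and subtracting these disks splits each sphere into spherical cycles in $\hat X$ and $\hat Y$; the $\hat X$ pieces are boundaries in $M$ by Corollary~\ref{cor:csum3}, so the class is carried by spheres in $\hat Y$, hence in $Y$, which tube together to a single map $S^2\to Y$ --- contradicting $\mathcal C^n$. (For $n=4$ the disks in $\mathscr S$ may not be embedded, but maps suffice.) If you prefer your algebraic route, there is a much cheaper fix than analyzing $\pi_2(M)$: the map $p:M\to Y$ that is the identity on $\hat Y$ and sends $\hat X$ into the contractible ball $B_Y\subset Y$ induces exactly the projection $H_2(M)\to H_2(Y)$, and naturality of the Hurewicz homomorphism under $p$ gives the implication immediately. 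No $\pi_1$-module bookkeeping is needed.
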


\begin{proof}
By the above remark, we obtain $b_1,\dotsc, b_{n-2}\in H_{n-1}(Y)$ whose intersection does not lie in the image of the Hurewicz map $\pi_2(Y)\to H_2(Y)$. We realize these in $M$ as anchored classes $a_1,\dotsc, a_{n-2}$ using the isomorphism \eqref{eqn:isom}. We can find mutually transverse hypersurfaces $\Sigma_1,\dotsc,\Sigma_{n-2}\subset \hat Y$ representing $b_i$, and hence $a_i$. We may also assume these are all transverse to the central sphere $\mathscr S$. 

Suppose $\Sigma_1\cap\cdots\cap \Sigma_{n-2}$ is homologous to a collection of embedded 2-spheres in $M$. If such a sphere $S$ lies in $\hat X$, it is a boundary by Corollary \ref{cor:csum3}. If $S$ intersects $\mathscr S$, we may assume it intersects in a collection of circles. Each circle can be capped by a map from $D^2$ lying in $\mathscr S$. By adding and subtracting this map from the homology class  $[S]$ we can push the two resulting spheres into $\hat Y$ and $\hat X$.\footnote{When $n=4$, $S\cap\mathscr S$ can be knotted, so the map from $D^2$ cannot be assumed to be an embedding.}

So $\Sigma_1\cap\cdots\cap \Sigma_{n-2}$ is homologous to a collection of spheres lying entirely in $\hat Y$. Using Proposition \ref{prop:csum2}, we can realize this as a representation of $b_1\bullet \cdots\bullet b_{n-2}$ by these spheres inside of $Y$. But these spheres can be tubed together to give an element of the image of $\pi_2(Y)\to H_2(Y)$. This contradicts the fact that $Y$ is of class $\mathcal C^n$. \end{proof}

\subsection{The Sequence}

Since we will be dealing with minimal hypersurfaces represending abstract homology classes, we need to make precise what our smooth objects are and what smooth convergence means. 

Let $(M^n,g)$ be a complete Riemannian manifold and $N_1,N_2,\dotsc$ be a sequence of smoothly embedded, $k$-dimensional, oriented submanifolds of $M$. If $N$ is another such submanifold, we say that $N_i\to N$ in $C^\infty_\mathrm{loc}$ if for every relatively compact domain $\Omega\subset M$, $N_i\cap\overline\Omega\to N\cap\overline\Omega$ in Hausdorff distance and for $i$ sufficiently large there exist functions $f_i:N\to \Bbb R^{n-k}$ such that $N_i\cap\Omega$ is the normal graph of $f_i$ over $N$ in $\Omega$ and $\|f_i\|_{C^k(N\cap\Omega)}\to 0$ for every $k\in\Bbb N$. It follows that in any bounded domain, $N$ and $N_i$ are eventually isotopic. 

We are interested in \emph{submanifolds with multiplicity}, i.e. $k$-dimensional integral currents whose regular set is their entire support. In other words, these are disjoint unions of the kinds of submanifolds considered above with integer multiplicities attached to each component. The convergence ``$N_i\to N$ in $C^\infty_\mathrm{loc}$" is defined componentwise and if the density (multiplicity) of some component of $m$, then we require $m$ sheets to converge to it at every point. 

In this context, we have the following compactness theorem for minimal hypersurfaces. 

\begin{thm}\label{thm:compactness}
	Let $M^n$ be a smooth manifold and $g_i\to g$ a sequence of complete Riemannian metrics converging in $C^\infty_\loc$. Let $\{\Sigma_i\}$ be a sequence of $g_i$-minimal hypersurfaces with multiplicity satisfying 
	\[\sup_i\left(\mathbf M_{g_i}(\Sigma_i\Mrest K)+\max_K|A_{\Sigma_i}|_{g_i}\right)<\infty\]
	for every compact set $K\subset M$. Then, after passing to a subsequence, $\{\Sigma_i\}$ converges in $C^\infty_\loc$ to a $g$-minimal hypersurface with multiplicity. 
\end{thm}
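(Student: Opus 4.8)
The plan is to run the standard compactness machinery for minimal hypersurfaces locally, and then patch the local limits together using a diagonal argument over an exhaustion of $M$ by relatively compact open sets. First I would fix an exhaustion $K_1\subset K_2\subset\cdots$ of $M$ by compact sets with $K_j\subset\Int K_{j+1}$ and $\bigcup_j K_j=M$. On each $\Int K_{j+1}$ the hypotheses give a uniform mass bound $\mathbf M_{g_i}(\Sigma_i\Mrest K_{j+1})\le C_j$ and a uniform second fundamental form bound $\max_{K_{j+1}}|A_{\Sigma_i}|_{g_i}\le C_j$; since $g_i\to g$ in $C^\infty_\loc$, after shrinking slightly these bounds hold with respect to a fixed background metric. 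The curvature bound gives uniform $C^{1,\alpha}$ (indeed, via the minimal surface equation and Schauder, uniform $C^{k}$ for every $k$) control on $\Sigma_i$ as a graph over its tangent plane at each point, with uniform graphical radius; this is the content of the classical interior estimates for stable-free minimal hypersurfaces with bounded $|A|$, e.g.\ as in Schoen--Simon or White.

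Next I would extract, by Arzel\`a--Ascoli applied to these graphical pieces together with the monotonicity-formula lower density bound, a subsequence along which $\Sigma_i\Mrest K_j$ converges in $C^\infty_\loc(\Int K_j)$ and in the flat topology (hence as currents) to an integral current $\Sigma^{(j)}$ supported in $K_j$. The curvature bound passes to the limit, so $\Sigma^{(j)}$ is a smoothly embedded hypersurface with multiplicity (its regular set is all of its support); and minimality passes to the limit because the first variation of $\Sigma_i$ with respect to $g_i$ tends to the first variation of $\Sigma^{(j)}$ with respect to $g$, using $g_i\to g$ in $C^\infty_\loc$. The multiplicity statement — that a density-$m$ component is approached by exactly $m$ graphical sheets — follows from the constancy of the limiting density on connected components of the regular set together with the convergence of mass. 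Now I would diagonalize: pass to a subsequence realizing the $K_1$ limit, then a further subsequence for $K_2$, and so on; the final diagonal subsequence converges in $C^\infty_\loc$ on every $K_j$, and since the limits $\Sigma^{(j)}$ agree on overlaps they glue to a single $g$-minimal hypersurface with multiplicity $\Sigma$ on all of $M$.

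The main obstacle is the local regularity/compactness input: upgrading the uniform bound on $|A_{\Sigma_i}|$ to uniform multi-sheeted graphical estimates with a controlled number of sheets, so that the limit is genuinely a smooth embedded submanifold-with-multiplicity rather than merely a stationary integral varifold with a singular set. This requires care because distinct sheets of $\Sigma_i$ can come together in the limit; the resolution is that the $|A|$ bound prevents this from producing singularities, and a sheeting/unique-continuation argument (together with the fact that two smooth minimal graphs that are $C^1$-close either coincide or are disjoint) controls the sheet count. A secondary technical point is ensuring the estimates are uniform in $i$ despite the varying metrics $g_i$; this is handled by the $C^\infty_\loc$ convergence $g_i\to g$, which makes all the geometric constants in the interior estimates uniform on each $K_j$ for $i$ large. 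Everything else — the exhaustion, the diagonal extraction, passing minimality and mass to the limit — is routine once the local statement is in hand.
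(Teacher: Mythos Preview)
The paper does not actually prove Theorem \ref{thm:compactness}; it is simply stated as a standard compactness result for minimal hypersurfaces with locally bounded mass and second fundamental form, and then invoked in the proof of Theorem \ref{thm:exist}. So there is no ``paper's own proof'' to compare against.

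That said, your outline is the standard argument and is essentially correct. The bounded-$|A|$ graphical lemma plus the local mass bound (controlling the number of sheets via monotonicity) is exactly how one upgrades varifold compactness to $C^\infty_\loc$ convergence of embedded hypersurfaces with multiplicity, and the diagonal extraction over an exhaustion is routine. One small comment: your ``secondary technical point'' about varying metrics is not an obstacle at all---once $g_i\to g$ in $C^\infty_\loc$, on any fixed compact set the $g_i$ are uniformly comparable to $g$ with all derivatives, so the graphical radius, Schauder constants, and monotonicity constants are uniform in $i$ for $i$ large; you can even just work in a fixed coordinate chart with the Euclidean metric as background. Your identification of the ``main obstacle'' (sheets coalescing) is also slightly overstated: with a uniform $|A|$ bound the sheets are locally graphs over a common plane with uniform $C^1$ norm, so the limit of each sheet is a smooth graph and the limits are either disjoint or coincide by the strong maximum principle for the minimal surface equation---no unique continuation or delicate sheeting argument is needed.
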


This notion of convergence allows for compact submanifolds to degenerate to noncompact ones. We have the following useful terminology.

\begin{defn}
Let $(M^n,g)$ be a complete Riemannian manifold and $[T]\in H_k(M)$. Let $N\subset M$ be a $k$-submanifold with multiplicity. We say that $N$ is \emph{weakly homologous} to $T$ if there exist submanifolds with multiplicity $N_1,N_2,\dotsc$, $N_i$ homologous to $T$, such that $N_i\to N$ in $C^\infty_\loc$.  We also say that $[T]$ \emph{degenerates} to $N$.  
\end{defn}

\subsection{Existence theorem}\label{ss:exhaust} We wish to apply standard techniques of GMT on specially modified compact subsets of our manifold. Let $(M^n,g)$ be a complete Riemannian manifold and $K\subset M$ a fixed compact set. It is standard that there exists a compact exhaustion $X_1\subset X_2\subset\cdots$ satisfying	
\begin{enumerate}[(i)]
	\item $\overline{X_i}\subset X_{i+1}$,
	\item $\partial X_i$ is a smoothly embedded submanifold, and 
	\item $K\subset X_1$.

	\item $\partial X_i$ is strictly outward mean convex with respect to a conformal metric $g_i$ on $X_i$, and
	
	\item $g=g_i$ outside a small neighborhood of $\partial X_i$ containing $X_{i-1}$.
\end{enumerate}
The metric $g_i$ on $X_i$ is constructed by a standard conformal deformation, cf. \cite{W19}, near $\partial X_i$ such that $g_i|_{X_{i-1}}=g|_{X_{i-1}}$. Let $h(t)$ be a positive smooth function on $\Bbb R$ with $h(t)=1$ for any $t\in \Bbb R\backslash [-\ve,\ve]$. Consider the function $f(x)\equiv h(d(x,\partial X_i))$ and the metric $g_{i}\equiv f^2 g|_{X_i}$. Then, under $(X_i,g_i)$, the mean curvature of $\hat{H}(x)$ of $\partial X_i$ is given by the well known formula \[ \hat{H}(x)=h^{-1}(0)\left(H(x)+2h'(0)h^{-1}(0)\right)\]
Thus it suffices to choose $\ve$ small enough and a function $h$ with $h(0)=2$ and $h'(0)>2\max_{x\in \partial X_i}|H(x)|+2$.\\ \\ 
We use this exhaustion to prove an existence theorem in anchored classes.

\begin{thm}\label{thm:exist}
	Let $(M^n,g)$ be a complete Riemannian manifold with $2\le n\le 7$. Let $a\ne 0\in H_{n-1}(M)$ be an anchored homology class. Then $a$ degenerates to a nontrivi{}al, properly embedded, locally area minimizing hypersurface with multiplicity. 
\end{thm}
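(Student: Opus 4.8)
The plan is to realize $a$ as a $C^\infty_\loc$ limit of area-minimizers on a compact exhaustion, so that the compactness theorem~\ref{thm:compactness} carries out the analytic work while the anchoring hypothesis supplies nontriviality of the limit. First I would fix a compact set $K\subset M$ in which $a$ is anchored, and fix once and for all an integral cycle $T_0\in\mathcal Z_{n-1}(M)$ with $\spt T_0\subset K$ and $[T_0]=a$. Then I would take the compact exhaustion $X_1\subset X_2\subset\cdots$ together with the conformal metrics $g_i$ constructed above, arranged so that $K\subset X_1$, that $g_i=g$ on $X_{i-1}$ (so in particular $g_i\to g$ in $C^\infty_\loc$), and that $\partial X_i$ is strictly outward mean convex for $g_i$. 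For each $i$, the image of $a$ in $H_{n-1}(X_i)$ is represented by $T_0$ and is nonzero, since a nullhomology of $T_0$ inside $X_i$ would also be one inside $M$. By the Federer--Fleming compactness theorem I may choose an integral current $\Sigma_i$ minimizing $\mathbf M_{g_i}$ in this class. The role of condition (iv) on the exhaustion is precisely that strict mean convexity of $\partial X_i$ acts as a barrier, forcing $\spt\Sigma_i\cap\partial X_i=\emptyset$; combined with interior regularity for codimension-one minimizers (here $n\le 7$ is used for the first time), $\Sigma_i$ is then a smooth, compact, properly embedded, $g_i$-area-minimizing hypersurface with multiplicity inside $\Int X_i$, with $\partial\Sigma_i=0$. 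Hence $\Sigma_i\in\mathcal Z_{n-1}(M)$ and $[\Sigma_i]=a$ in $H_{n-1}(M)$.

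Next I would check the hypotheses of Theorem~\ref{thm:compactness}. Since $T_0$ is a competitor for $\Sigma_i$ in $X_i$ with $\spt T_0\subset K\subset X_{i-1}$, where $g_i=g$, one gets the uniform mass bound $\mathbf M_{g_i}(\Sigma_i)\le\mathbf M_{g_i}(T_0)=\mathbf M_g(T_0)<\infty$. Being area-minimizing the $\Sigma_i$ are in particular stable, and in ambient dimension $n\le 7$ the curvature estimates for minimizing hypersurfaces (equivalently, the absence of singular area-minimizing cones), together with this mass bound and $g_i\to g$ in $C^\infty_\loc$, give $\sup_i\max_{K'}|A_{\Sigma_i}|_{g_i}<\infty$ for every compact $K'\subset M$. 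Theorem~\ref{thm:compactness} then produces, after passing to a subsequence, a $g$-minimal hypersurface with multiplicity $\Sigma$ with $\Sigma_i\to\Sigma$ in $C^\infty_\loc$; by definition $a$ degenerates to $\Sigma$. That $\Sigma$ is locally area-minimizing in $(M,g)$ I would obtain from the usual cut-and-paste argument: a hypothetical cheaper competitor for $\Sigma$ supported in a small ball can be grafted onto $\Sigma_i$ for large $i$, interpolating across a thin annulus using the smooth convergence, which contradicts the minimality of $\Sigma_i$. Properness comes for free, since $\spt\Sigma$ is closed in $M$ and a closed smooth submanifold is properly embedded.

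Finally, the anchoring hypothesis enters only to guarantee that the limit is nontrivial. If some $\Sigma_i$ had $\spt\Sigma_i\cap K=\emptyset$, then $a=[\Sigma_i]$ would be representable by a cycle disjoint from $K$, contradicting that $a$ is anchored in $K$; hence $\spt\Sigma_i\cap K\ne\emptyset$ for every $i$. Fixing a relatively compact open set $\Omega\supset K$, we have $\Sigma_i\cap\overline\Omega\ne\emptyset$ for all $i$, so the Hausdorff convergence $\Sigma_i\cap\overline\Omega\to\Sigma\cap\overline\Omega$ built into $C^\infty_\loc$ convergence forces $\Sigma\cap\overline\Omega\ne\emptyset$, i.e. $\Sigma\ne0$; alternatively, the monotonicity formula gives a uniform positive lower bound on $\mathbf M_{g_i}(\Sigma_i\Mrest\Omega)$ that survives the limit. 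I expect the main obstacle to be the analytic input of the previous paragraph: the uniform second fundamental form bound required by Theorem~\ref{thm:compactness}, which is where $n\le 7$ is essential, coupled with ensuring that the barrier argument is tight enough that the $\Sigma_i$ never reach $\partial X_i$, so that they persist as honest cycles in the class $a$ with the uniform bounds measured on sets where $g_i=g$ throughout the limiting process.
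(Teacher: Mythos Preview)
Your proposal is correct and follows essentially the same line as the paper's proof: minimize in $a$ on the mean-convex compact exhaustion $(X_i,g_i)$, use the fixed representative $T_0$ for the uniform mass bound, invoke the Schoen--Simon curvature estimates (this is where $n\le 7$ enters), and pass to the limit via Theorem~\ref{thm:compactness}. The only notable difference is in the nontriviality step: the paper obtains a uniform lower bound $\mathbf M_g(\Sigma_i\Mrest X_1)\ge\varepsilon$ via Federer's isoperimetric theorem (arguing that a too-small slice could be pushed off $X_1$, contradicting anchoring), whereas you argue more directly that $\spt\Sigma_i\cap K\ne\emptyset$ forces $\Sigma\cap\overline\Omega\ne\emptyset$ by the Hausdorff part of $C^\infty_\loc$ convergence, with monotonicity as a backup; both arguments are valid, and yours is slightly more streamlined.
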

\begin{proof}
	Let $K$ be the compact set which anchors $a$ and consider the compact exhaustion constructed above. Since $(X_i,g_i)$ is compact with strictly mean convex boundary, there exists a $g_i$-mass minimizing closed hypersurface with multiplicity $\Sigma_i\in a$, supported in $X_i$. We have 
	\[\mathbf  M_{g_i}(\Sigma_i)\le \mathbf  M_{g}(T),\]
	where $T$, $\spt T\subset K$, is a representative of $a$. By the Schoen--Simon curvature estimate \cite{SchoenSimon} we obtain a sequence of constants $C_1,C_2,\dotsc$ such that 
	\[\sup_i\left(\sup_{X_j}|A_{\Sigma_i}|\right)\le C_j.\]
	Using Theorem \ref{thm:compactness}, we obtain a subsequential limit of $\Sigma_i$ to a hypersurface with multiplicity $\Sigma$. 
	
	We now show that $\Sigma$ is nontrivial. Firstly, each $\Sigma_i$ intersects $X_1$ since $a$ is anchored there. We claim that there is a constant $\ve>0$ such that 
	\begin{equation}\mathbf M_g(\Sigma_i\Mrest X_1)\ge \ve\label{eqn:lb}
	\end{equation} 
	for every $i$.  Indeed, by the isoperimetric theorem \cite[4.4.2 (1)]{Federer} there is a number $\ve=\ve(X_1,g)>0$ such that if $\bf M_g(\Sigma_i\Mrest X_1)<\ve$, then 
	$\Sigma_i\Mrest X_1=T+\partial Y$, where $T\in \mathbf I_{n-1}(M)$ is supported in a tubular neighborhood of $\partial X_1$ and $Y\in \mathbf I_n(M)$. So if \eqref{eqn:lb} is not true, then by replacing $\Sigma_i$ in $X_1$ by $T$, we obtain a cycle representing $a$ whose support can be made disjoint from $X_1$ by flowing outward along the geodesic flow. This contradicts the anchoring of $a$. 
	
	Since mass is continuous under smooth convergence, \eqref{eqn:lb} implies $\Sigma$ is nontrivial. Since each $\Sigma_i$ is compact and the convergence is in $C^\infty_\loc$, $\Sigma$ is properly embedded. Finally, since $C^\infty_\loc$ convergence implies weak convergence of the underlying integral currents, $\Sigma$ is locally area minimizing by \cite[34.5]{Simon}.
\end{proof}

\subsection{Theorem \ref{thm:main}} Let $M^n=Y^n\# X^n$, where $Y^n$ is of class $\mathcal C'^n$. Let $a_1,\dotsc, a_{n-2}\in H_{n-1}(M)$ be the anchored homology classes given by Proposition \ref{contagious}. Minimize area in $a_1$ according to Theorem \ref{thm:exist} to obtain a locally area minimizing hypersurface with multiplicity $\Sigma$.


When $n=3$ we can apply the following classical result \cite{SY79, FCS80, SY82}.
\begin{thm}
	Let $(M^3,g)$ have positive scalar curvature and $\Sigma^2$ be a complete, stable, immersed minimal surface in $M$. If $\Sigma$ is compact, it is diffeomorphic to the sphere. If it is noncompact, it is conformally equivalent to $\Bbb C$ with its standard conformal structure. 
\end{thm}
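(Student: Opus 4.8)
The plan is to run the Schoen--Yau second‑variation argument and then split into the two cases. First I record the stability inequality for the two‑sided minimal surface $\Sigma$: for every $\phi\in C^\infty_c(\Sigma)$,
\[
\int_\Sigma |\nabla\phi|^2 \;\ge\; \int_\Sigma \bigl(|A_\Sigma|^2+\Ric_M(\nu,\nu)\bigr)\,\phi^2 ,
\]
with $\nu$ a unit normal. Tracing the Gauss equation for $\Sigma^2\hookrightarrow M^3$, and using $H_\Sigma=0$ together with $R_\Sigma=2K_\Sigma$, gives $\Ric_M(\nu,\nu)=\tfrac12 R_M-K_\Sigma-\tfrac12|A_\Sigma|^2$, hence $|A_\Sigma|^2+\Ric_M(\nu,\nu)=\tfrac12|A_\Sigma|^2+\tfrac12 R_M-K_\Sigma$. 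Substituting and discarding the nonnegative term $\tfrac12|A_\Sigma|^2$ yields the key inequality
\[
\int_\Sigma \bigl(|\nabla\phi|^2+K_\Sigma\,\phi^2\bigr) \;\ge\; \tfrac12\int_\Sigma R_M\,\phi^2 \;>\;0
\]
for every $\phi\in C^\infty_c(\Sigma)$ with $\phi\not\equiv 0$, since $R_M>0$; equivalently, the Schr\"odinger operator $-\Delta_\Sigma+K_\Sigma$ is positive on $\Sigma$.

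If $\Sigma$ is compact, then $\phi\equiv 1$ is an admissible test function and the inequality gives $\int_\Sigma K_\Sigma>0$. By Gauss--Bonnet, $2\pi\chi(\Sigma)=\int_\Sigma K_\Sigma>0$, so $\chi(\Sigma)>0$; as $\Sigma$ is closed and orientable this forces $\chi(\Sigma)=2$, i.e.\ $\Sigma$ is diffeomorphic to $S^2$.

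If $\Sigma$ is noncompact, positivity of $-\Delta_\Sigma+K_\Sigma$ on the complete surface $\Sigma$ is exactly the hypothesis of the Fischer-Colbrie--Schoen structure theorem \cite{FCS80, SY82}, which I would invoke to conclude that $\Sigma$ is conformally equivalent to $\Bbb C$ or to a flat cylinder, and that in the cylinder case $\Sigma$ is totally geodesic with $R_M\equiv 0$ along $\Sigma$. The latter contradicts $R_M>0$, so $\Sigma$ is conformally $\Bbb C$ with its standard structure. For completeness I would sketch the cited theorem: positivity of $-\Delta_\Sigma+K_\Sigma$ yields, by the standard correspondence between positivity of a Schr\"odinger operator and the existence of a positive supersolution, a function $u>0$ on $\Sigma$ with $\Delta_\Sigma u-K_\Sigma u\le 0$; passing to the conformal metric $\hat g=u^{2}g$, this inequality forces $\hat g$ to have nonnegative Gauss curvature, and a capacity/parabolicity estimate with logarithmic cutoff test functions (which also handles the possible incompleteness of $\hat g$) rules out the conformal disk, leaving only $\Bbb C$ and the cylinder, the latter collapsing to the flat totally geodesic case.

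The main obstacle is this noncompact step. The compact case is one line of Gauss--Bonnet, but identifying the conformal type of a complete noncompact stable minimal surface is the genuinely hard part, and is the content of \cite{FCS80, SY82}. The delicate point is excluding the conformal disk: one must upgrade the purely spectral assertion ``$-\Delta_\Sigma+K_\Sigma\ge 0$'' to a conclusion about conformal type, and this is precisely where the positive supersolution $u$ and the careful parabolicity argument are needed.
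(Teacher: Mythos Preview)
The paper does not prove this theorem; it is stated as a classical result and attributed to \cite{SY79, FCS80, SY82}, then applied as a black box. Your proposal is precisely the standard proof found in those references---the Schoen--Yau rewriting of the stability inequality via the Gauss equation, Gauss--Bonnet in the compact case, and the Fischer-Colbrie--Schoen conformal-type argument in the noncompact case---so there is nothing to compare and your sketch is correct.
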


\begin{rk}
If the scalar curvature is merely nonnegative, then a totally geodesic torus or punctured complex plane are allowed as well and there is a rigidity statement.
\end{rk}

Now no component of $\Sigma$ can be diffeomorphic to $\Bbb C$ since it has finite area. Indeed, Gromov and Lawson have shown that a stable minimal surface of finite area in a 3-manifold with $R>0$ is homeomorphic to a sphere \cite[Theorem 8.8]{GL83}.\footnote{A different argument for dealing with $\Bbb C$ not relying on the global area estimate is given in the proof of Theorem 1.10.} Therefore $\Sigma_i$ eventually consists of only spheres with multiplicity and perhaps other components lying in $\hat X$, which are homologically trivial by Corollary \ref{cor:csum3}. This contradicts the $\mathcal C'^3$ property. \\ \indent
As for $4\leq n\leq 7$, the two issues are that a characterization of stable minimal hypersurfaces in the presence of $R>0$ is no longer available, and moreover that owing to the presence of possibly non-compact components, the stability based argument of \cite{SY79} no longer yields that $\Sigma$ itself admits a \textit{complete} conformal metric with $R>0$. \\ \indent 
Given these difficulties, we assume that $M^n=\mathcal C^n \#X^n$ is itself $(\lambda,\rho)$-contractible. By arguments in \cite{G83} and \cite{G18}, this forces $\Sigma$ to be compact, which is sufficient for the induction argument \cite{SY79}. This is sketched in Step 3 of \cite[Sec. 11.6]{G18}. The key point is that $(\lambda,\rho)$-contractibilty allows one to use the localization trick used in the proof of \cite[Theorem $(\mathrm C_4)$ App. B]{G83} to generalize the arguments of 3.4.B and 3.4.C of \cite{G83} to non-compact manifolds. From there, one obtains the filling inequalities in Step 3 of \cite[Sec 11.6]{G18}. 
From there, Gromov claims a lower $(n-1)$-volume bound on the intersection between the minimizing hypersurface (minimal in our case) and the $n$-ball of radius of $\rho$. Compactness is then easily deduced. We explain this volume bound below. \indent
\begin{defn}
	Let $(M^n,g)$ be a complete Riemannian manifold.  Let $c,\rho>0$. We say that a submanifold $\Sigma^k\subset (M^n,g)$ is $(c,\rho)$-\emph{fat} if $\bf M(\Sigma\cap B_\rho(x))\ge c$ for every $x\in \Sigma$. 
\end{defn}
\begin{prop}\label{prop:fatcompact}
	If $\Sigma$ is a properly embedded connected submanifold which is $(c,\rho)$-fat and has finite mass, then $\Sigma$ is compact. In fact, if $x \in \Sigma$, then $\Sigma\subset B_{\rho\mathbf M(\Sigma)/c}(x)$. 
\end{prop}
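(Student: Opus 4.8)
The plan is to prove the quantitative statement first and deduce compactness from it. Fix a point $x \in \Sigma$ and set $D = \rho \mathbf M(\Sigma)/c$. Suppose for contradiction that $\Sigma$ is not contained in $\overline{B_D(x)}$. Since $\Sigma$ is connected and properly embedded, and since $\overline{B_D(x)} \cap \Sigma$ is compact while $\Sigma \setminus \overline{B_D(x)}$ is nonempty, I can choose a sequence of points $x = y_0, y_1, y_2, \dots, y_m$ in $\Sigma$ with $\dist(y_j, x) = j\rho$ for $j = 0, 1, \dots, m$, where $m = \lfloor D/\rho \rfloor$ or thereabouts: concretely, take a path in $\Sigma$ from $x$ to a point at distance $> D$ from $x$ (using that $\Sigma$ is connected, hence path-connected, and properness guarantees the path has points at every intermediate distance by the intermediate value theorem applied to $\dist(x, \cdot)$ along the path), and sample it at distances $0, \rho, 2\rho, \dots$.

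The key geometric observation is that the balls $B_{\rho/2}(y_j)$ are pairwise disjoint: if $j \ne k$ then $\dist(y_j, y_k) \ge |j - k|\rho \ge \rho$ by the triangle inequality along with the fact that $\dist(\cdot, x)$ is $1$-Lipschitz and $\dist(y_j, x) = j\rho$ — wait, that only gives $\dist(y_j,y_k)\ge|j-k|\rho$ directly, which is what we want. Hence the sets $\Sigma \cap B_{\rho/2}(y_j)$ are disjoint subsets of $\Sigma$. Now I would apply the $(c,\rho)$-fatness: actually fatness is stated with radius $\rho$, so I would instead sample at distances $0, 2\rho, 4\rho, \dots$ so that the balls $B_\rho(y_j)$ are disjoint (since $\dist(y_j,y_k) \ge 2|j-k|\rho \ge 2\rho > \rho + \rho$). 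Then each $\Sigma \cap B_\rho(y_j)$ has mass at least $c$ by fatness, and these are disjoint, so
\[
\mathbf M(\Sigma) \ge \sum_{j=0}^{m} \mathbf M(\Sigma \cap B_\rho(y_j)) \ge (m+1)c.
\]
If $\Sigma$ extended to distance greater than $D = \rho\mathbf M(\Sigma)/c$ from $x$, I could take $m+1 > \mathbf M(\Sigma)/c$, i.e. $m \ge \lfloor \mathbf M(\Sigma)/c \rfloor$, giving $\mathbf M(\Sigma) \ge (m+1)c > \mathbf M(\Sigma)$, a contradiction. Hence $\Sigma \subset \overline{B_D(x)}$. (The bookkeeping with $2\rho$ versus $\rho$ spacing changes the constant by a factor of $2$; one can recover the sharp constant $\rho\mathbf M(\Sigma)/c$ by a slightly more careful packing argument, or simply absorb it — I would state the clean bound $\Sigma \subset B_{2\rho\mathbf M(\Sigma)/c}(x)$ if the sharp one proves annoying, but the factor can be removed by noting fatness at $y_j$ can be localized to the annulus between consecutive spheres.)

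Once $\Sigma \subset \overline{B_D(x)}$, compactness follows immediately: $\Sigma$ is a closed subset (being properly embedded, $\Sigma$ is closed in $M$) of the compact set $\overline{B_D(x)}$ (compact because $g$ is complete, by Hopf–Rinow), hence compact. The main obstacle I anticipate is the careful extraction of the well-separated sequence $y_j$ — one needs properness to ensure $\dist(x, \cdot)$ restricted to $\Sigma$ is a proper exhaustion function so that "$\Sigma$ reaches distance $> D$" can be converted into a genuine sequence of points at prescribed distances, and one needs to verify that the balls $B_\rho(y_j)$ are honestly disjoint, which requires the spacing to exceed $2\rho$ rather than just $\rho$; getting the optimal constant $\rho\mathbf M(\Sigma)/c$ as stated requires the slightly finer observation that the portions of $\Sigma$ in disjoint annuli $\{j\rho < \dist(\cdot,x) < (j+1)\rho\}$ already carry mass $\ge c$ by a localized version of fatness applied at points on the sphere $\{\dist(\cdot,x) = (j+\frac12)\rho\}$.
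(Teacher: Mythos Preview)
Your approach is essentially the same as the paper's: choose well-separated points of $\Sigma$ so that the corresponding $\rho$-balls are pairwise disjoint, and use $(c,\rho)$-fatness to force the total mass to be large. The paper's proof is in fact briefer than yours---it only proves the compactness statement (noncompact $\Rightarrow$ infinitely many $2\rho$-separated points $\Rightarrow$ infinite mass) and does not actually verify the quantitative inclusion $\Sigma\subset B_{\rho\mathbf M(\Sigma)/c}(x)$; you go further and extract an explicit diameter bound, correctly flagging the factor-of-$2$ bookkeeping that arises from needing $2\rho$ spacing for disjoint $\rho$-balls.
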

\begin{proof}
If $\Sigma$ is noncompact, we can find distinct points $x_1,x_2,\dotsc$, tending to infinity, such that $B_\rho(x_i)\cap B_\rho(x_j)=\emptyset$ for $i\ne j$. This implies that $\Sigma$ has infinite mass, which is a contradiction. 
\end{proof}
\begin{defn}
	As in \cite{MSY}, we say that a complete manifold $(M^n,g)$ is \emph{homogeneously regular/bounded geometry} if its global injectivity radius is positive and all its sectional curvatures are uniformly bounded. 
\end{defn}
A short argument in \cite{MY} gives the following lower bound.
\begin{lem}
	Let $\Sigma$ be a minimal submanifold in a ball $B_\rho\subset M$, where $(M,g)$ is homogeneously regular and $0<\rho\le \inj(M,g)$. If $\partial \Sigma\subset \partial B_\rho$, then $\mathbf M(\Sigma)\ge c>0$, where $c$ depends only on $\rho$ and the ambient geometry. Hence $\Sigma$ is $(c,\rho)$-fat. 
\end{lem}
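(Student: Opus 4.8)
The plan is to prove a monotonicity-type lower mass bound for minimal submanifolds passing through the center of a small geodesic ball, using the ambient bounded geometry to reduce to a near-Euclidean situation. First I would fix a point $p$ and take $\rho\le \inj(M,g)$, so that $B_\rho(p)$ is the diffeomorphic image under $\exp_p$ of a Euclidean ball; since the sectional curvatures are uniformly bounded, after possibly shrinking $\rho$ (depending only on the curvature bounds and $\inj(M,g)$) the metric in geodesic normal coordinates is comparable to the Euclidean metric, say $\tfrac12\delta\le g\le 2\delta$, with controlled first derivatives. This is the step where homogeneous regularity is essential: it makes the constants uniform over all $p\in M$, so the resulting $c$ depends only on $\rho$ and the ambient geometry, not on the location of the ball.

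Next I would invoke the monotonicity formula for minimal submanifolds. For a $k$-dimensional minimal $\Sigma$ in a Riemannian ball with metric $C^1$-close to Euclidean, the ratio $r^{-k}\mathbf M(\Sigma\cap B_r(p))$ is \emph{almost} monotone: more precisely $e^{\Lambda r} r^{-k}\mathbf M(\Sigma\cap B_r(p))$ is nondecreasing in $r$ for a constant $\Lambda$ depending only on the $C^1$-bounds on $g$ (see e.g. Simon's lectures, around §17, or the minimal-submanifold version of §40 of \cite{Simon}). If $p\in\Sigma$ then the density at $p$ is at least $1$, so letting $r\to 0$ gives $\omega_k\le e^{\Lambda\rho}\rho^{-k}\mathbf M(\Sigma\cap B_\rho(p))$, i.e. $\mathbf M(\Sigma\cap B_\rho(p))\ge \omega_k e^{-\Lambda\rho}\rho^k=:c>0$. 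Since this holds for every $p\in\Sigma$, $\Sigma$ is $(c,\rho)$-fat by definition, which is the claim; note the hypothesis $\partial\Sigma\subset\partial B_\rho$ is exactly what guarantees $\Sigma\cap B_r(p)$ has no boundary in the interior, so the monotonicity formula applies without boundary corrections.

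The main obstacle is making the almost-monotonicity rigorous in the Riemannian (rather than Euclidean) setting while keeping the constant uniform: one must check that the error terms in the first-variation computation — coming from the difference between the Riemannian distance function's gradient and the Euclidean radial field, and from the fact that $\nabla r$ is not exactly the position vector — are bounded purely in terms of the curvature bounds and $\rho$. This is handled by the standard comparison estimates ($|\nabla^2 r - \tfrac1r(g - dr\otimes dr)|\le C(\text{curvature})\, r$ for $r\le\inj$), and since bounded geometry gives these comparisons uniformly, the constant $\Lambda$ and hence $c$ are uniform. The only other point to be careful about is the multiplicity: if $\Sigma$ is a submanifold with multiplicity $m\ge 1$ through $p$, the density bound improves to $m\omega_k$, so the stated bound with $c$ still holds a fortiori. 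This is precisely the short argument referenced in \cite{MY}, and I would simply cite it for the monotonicity input and spell out only the reduction to near-Euclidean coordinates.
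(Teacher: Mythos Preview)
Your argument is correct and is precisely the standard monotonicity argument the paper defers to \cite{MY}: the paper gives no proof of its own here, only the citation, and what you have written is exactly the ``short argument'' in question (almost-monotonicity of the density ratio in bounded geometry, plus density $\ge 1$ at the center point $p\in\Sigma$). Your reading that the ball is centered at a point of $\Sigma$ is the intended one, as the ``Hence $\Sigma$ is $(c,\rho)$-fat'' clause makes clear.
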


Since this implies compactness by Proposition \ref{prop:fatcompact}, we see that Theorem \ref{thm:main} (ii) holds under the assumption of homogeneous regularity: There is no homogeneously regular $R>0$ metric on a $\mathcal C^n\# X^n$ manifold. To explain Gromov's idea, we use a simple inequality.

\begin{lem}[Reverse Gronwall]
	Let $u:[0,T]\to\Bbb R_{\ge 0}$ be a continuous function, $u(t)>0$ for $t>0$. Suppose \[\int_0^t cu(s)^p\,ds\le u(t)\] 
	for some constants $c>0$, $0<p<1$, and all $t\in [0,T]$. Then 
	\[u(t)\ge c't^\frac{1}{1-p}\]
	for all $t\in[0,T]$. 
\end{lem}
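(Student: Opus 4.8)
The plan is to prove the Reverse Gronwall inequality by a bootstrapping argument: feed a lower bound on $u$ back into the integral hypothesis to obtain a better lower bound, and iterate. First I would record the trivial starting point that $u \geq 0$ everywhere, and then seek a lower bound of the form $u(t) \geq a_k t^{b_k}$ on $[0,T]$ with $(a_k, b_k)$ improving at each step. If $u(s) \geq a s^b$ on $[0,t]$, then the hypothesis gives
\[
u(t) \;\ge\; \int_0^t c\,u(s)^p\,ds \;\ge\; c a^p \int_0^t s^{bp}\,ds \;=\; \frac{c a^p}{bp+1}\, t^{bp+1},
\]
so the exponent updates by $b \mapsto bp+1$ and the constant by $a \mapsto c a^p/(bp+1)$. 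Starting from $b_0 = 0$ (which is legitimate once we know $u>0$ for $t>0$ — or we can start the iteration from a crude power bound derived directly), the exponent recursion $b_{k+1} = p b_k + 1$ is a contraction since $0<p<1$, with fixed point $b_\infty = 1/(1-p)$, exactly the claimed exponent.

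Next I would analyze the two recursions carefully. The exponent sequence $b_k = 1 + p + \cdots + p^{k-1} = \frac{1-p^k}{1-p}$ increases monotonically to $\frac{1}{1-p}$. For the constants, I would show the sequence $a_k$ stays bounded below by a positive constant $c'$ independent of $k$: taking logarithms, $\log a_{k+1} = p \log a_k + \log c - \log(p b_k + 1)$, and since $pb_k + 1 \leq p b_\infty + 1 = \frac{1}{1-p}$ is bounded, the term $\log c - \log(pb_k+1)$ is bounded below by a constant $\kappa$, so $\log a_{k+1} \geq p \log a_k + \kappa$; this affine recursion with factor $p<1$ keeps $\log a_k$ bounded below by $\min(\log a_1, \kappa/(1-p))$. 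Hence $a_k \geq c' > 0$ for all $k$.

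Finally I would pass to the limit. Fix $t \in (0,T]$. For each $k$ we have $u(t) \geq a_k t^{b_k} \geq c'\, t^{b_k}$. Since $b_k \to \frac{1}{1-p}$ and $t > 0$ is fixed, $t^{b_k} \to t^{1/(1-p)}$ (if $t \leq 1$ this is a decreasing limit, if $t \geq 1$ an increasing one; either way it converges), so letting $k \to \infty$ gives $u(t) \geq c'\, t^{1/(1-p)}$. The case $t=0$ is trivial since the bound reads $u(0) \geq 0$. I expect the main obstacle to be the bookkeeping for the constant $a_k$: one must verify that the division by $pb_k+1$ at each stage does not drive the constant to zero, which is why the uniform bound $pb_k+1 \leq \frac{1}{1-p}$ (coming from monotonicity of $b_k$) is the crucial observation. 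A secondary point to handle cleanly is justifying the base case of the induction — getting the very first genuine power lower bound $u(t) \geq a_1 t^{b_1}$ out of the hypothesis, which requires knowing $u$ is not identically zero near $0$; this is where the stated assumption $u(t)>0$ for $t>0$ is used (together with continuity, it gives a positive lower bound on any $[\delta, T]$, which can be converted into the needed seed bound, or one simply observes $b_0 = 0$, $a_0$ arbitrary small positive works directly on each $[\delta,T]$ and then lets $\delta \to 0$).
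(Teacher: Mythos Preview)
Your bootstrapping argument is correct and even recovers the sharp constant $c'=(c(1-p))^{1/(1-p)}$, but it is a genuinely different route from the paper's one-line proof. The paper introduces $v(t)=\int_0^t u^p$, observes that the hypothesis reads $v'=u^p\ge (cv)^p=c^p v^p$, and integrates this separable differential inequality directly---that is what ``follow the usual Gronwall proof'' means. This sidesteps the seed issue entirely: $v$ is absolutely continuous, $v(0)=0$, and $v>0$ on $(0,T]$, so one divides by $v^p$, integrates from $\epsilon$ to $t$, and lets $\epsilon\downarrow 0$. Your approach is more elementary (no ODE manipulation) but heavier in bookkeeping; its key insight, which you identify correctly, is that the affine recursion $\log a_{k+1}\ge p\log a_k+\kappa$ has a fixed point $\kappa/(1-p)$ independent of the seed, so the $\delta$-dependence of $a_0=m_\delta$ disappears after $k\to\infty$, \emph{before} you send $\delta\to 0$. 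One point to tighten: when you restrict to $[\delta,T]$, the iterated bound is $u(t)\ge a_k(t-\delta)^{b_k}$ rather than $a_k t^{b_k}$, since the seed $u\ge m_\delta$ holds only on $[\delta,T]$; this is harmless for the final $\delta\to 0$ step but should be stated.
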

\begin{proof}
	Let $v(t)=\int_0^t u^p$. Then $v'=u^p$ and one can follow the steps of the usual proof of the usual Gronwall's inequality.
\end{proof}
Let $\Sigma^k$ be an embedded $k$-dimensional submanifold and let $x\in \Sigma$. We set 
\[u(r)=\mathbf M(\Sigma\Mrest B_r(x)). \]
We set 
\[v(r)=\mathbf M(\Sigma\Mrest \partial B_r(x)).\]
The coarea formula implies
\[\int_0^r v \,ds\le u(r).\]
The idea is to fill $\Sigma\Mrest \partial B_r$ with a $k$-chain and use the isoperimetric inequality. Indeed, assuming $(\lambda,\rho)$-contractibility, we can find a $k$-chain $A$ with $\partial A=\Sigma\Mrest \partial B_r$ (at least for almost every $r$) and 
\[\mathbf M(A)\le c(\lambda,\rho,n) v(r)^{\frac{k}{k-1}}\]
If we assume $\Sigma$ is now area minimizing, we find 
\[u(r)\le c v(r)^\frac{k}{k-1}.\]
Substituting this into the coarea inequality,
\[\int_0^r cu^\frac{k-1}{k}\,ds\le u(r).\]
Using the reverse Gronwall inequality, we obtain the desired lower bound $u(r)\ge cr^{-k}$. We conclude:

\begin{prop}
Let $(M^n,g)$ be complete and $(\lambda,\rho)$-contractible. Then any properly embedded, area minimizing submanifold is $(c,\rho)$-fat, where $c=c(\lambda,\rho,n)$. 
\end{prop}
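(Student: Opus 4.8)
The plan is to chain together the estimates already assembled in the preceding paragraphs into a clean proof that any properly embedded, area minimizing submanifold $\Sigma^k$ in a $(\lambda,\rho)$-contractible manifold is $(c,\rho)$-fat. Fix $x\in\Sigma$ and set $u(r)=\mathbf M(\Sigma\Mrest B_r(x))$ and $v(r)=\mathbf M(\Sigma\Mrest\partial B_r(x))$ as above. First I would record the coarea inequality $\int_0^r v\,ds\le u(r)$, valid for all $r\le\rho$. Next, for a.e. $r\in(0,\rho)$ the slice $\Sigma\Mrest\partial B_r(x)$ is a genuine $(k-1)$-cycle, and since it is supported in a $\rho$-ball, $(\lambda,\rho)$-contractibility lets us apply the cone/filling construction (the localized version of 3.4.B--3.4.C of \cite{G83}, as imported in the discussion above) to produce a $k$-chain $A_r$ with $\partial A_r=\Sigma\Mrest\partial B_r(x)$ and $\mathbf M(A_r)\le c_1(\lambda,\rho,n)\,v(r)^{k/(k-1)}$.

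Then I would invoke the area minimizing hypothesis: since $\partial(\Sigma\Mrest B_r(x))$ agrees with $\partial A_r$ (up to the slice, which is their common boundary) and $\Sigma$ minimizes mass in its homology class relative to $\partial B_r(x)$, comparison with $A_r$ gives $u(r)=\mathbf M(\Sigma\Mrest B_r(x))\le\mathbf M(A_r)\le c_1 v(r)^{k/(k-1)}$, hence $v(r)\ge c_2\, u(r)^{(k-1)/k}$. Feeding this back into the coarea inequality yields
\[
\int_0^r c_2\, u(s)^{\frac{k-1}{k}}\,ds\le u(r)\qquad\text{for all }r\in(0,\rho].
\]
Now apply Lemma (Reverse Gronwall) with $p=(k-1)/k\in(0,1)$, $c=c_2$, $T=\rho$, to conclude $u(r)\ge c_3\, r^{k}$ for all $r\in(0,\rho]$; in particular $\mathbf M(\Sigma\cap B_\rho(x))=u(\rho)\ge c_3\rho^{k}=:c$, with $c=c(\lambda,\rho,n)$. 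Since $x\in\Sigma$ was arbitrary, $\Sigma$ is $(c,\rho)$-fat.

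The main obstacle is making the filling step genuinely local and uniform: one must verify that $(\lambda,\rho)$-contractibility, which is phrased via Lipschitz contractions of metric balls, actually delivers the isoperimetric-type bound $\mathbf M(A_r)\le c_1 v(r)^{k/(k-1)}$ for cycles of \emph{every} dimension $1\le j\le k-1$ sitting inside $B_r(x)$ with constants depending only on $\lambda,\rho,n$ — this is the induction on dimension underlying 3.4.B--3.4.C of \cite{G83}, and the subtlety (as flagged in the commented-out passage) is that the usual decomposition arguments are false on arbitrary noncompact manifolds, so one genuinely needs the localization to $\rho$-balls. A secondary technical point is handling the a.e.-$r$ regularity of the slices and choosing a sequence $r_j\uparrow\rho$ along which everything is valid, then passing to the limit using continuity of $u$; this is routine once the filling estimate is in hand.
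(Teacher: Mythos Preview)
Your proposal is correct and follows exactly the argument the paper assembles in the paragraphs immediately preceding the proposition: coarea inequality, filling via $(\lambda,\rho)$-contractibility, comparison by area minimization, and the reverse Gronwall lemma. You have in fact written the exponent correctly as $u(r)\ge c_3 r^{k}$ (the paper's $cr^{-k}$ is evidently a typo), and your closing remarks about the localization of the filling inequalities to $\rho$-balls accurately reflect the subtlety the paper flags via the references to \cite{G83} and \cite{G18}.
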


So again the minimizer associated to $a_1$ is compact and we may proceed with the induction, making use of the anchoring at each step. This completes the proof of the theorem.

\section{Proof of Theorem \ref{SYL}}

Let $(M,g)$ and $(N,h)$ be two Riemannian manifolds of the same dimension. A smooth map $F : M \to  N$ is said to be \textit{conformal} if there exists a smooth positive function $u$ on $M$ such that $F^* h=ug$. If $F$ is in addition a diffeomorphism, we say that it is a conformal diffeomorphism. If $F$ is a conformal map, $dF$ can never have a kernel, so $F$ is an immersion. \\ \indent 
An $n$-dimensional manifold $M$ is said to be \textit{locally conformally flat} (LCF) if it admits an atlas $A = \{(U_\alpha, \phi_\alpha)\}, \phi_{\alpha} : U_\alpha \subset M\to S^n$, such that whenever $U_{\alpha}\cap U_{\beta} \neq \emptyset$, the transition function
\[\phi_{\beta}\circ \phi^{-1}_{\alpha}:\phi_{\alpha}(U_{\alpha}\cap U_{\beta})\to \phi_\beta (U_{\alpha} \cap U_{\beta})\] 
is a conformal diffeomorphism of open subsets of $S^n$ with the standard round metric.

\subsection{Existence of the Conformal Green's Function}

Recall the conformal Green's function 
\[Lu=-\Delta u+aRu,\]
where $a=(n-2)/4(n-1)$. In this section we prove the existence of a unique minimal Green's function for the conformal Laplacian on certain open manifolds. 
\begin{prop}[]\label{thm:D1}
	Let $(M,g)$ be a locally conformally flat Riemannian manifold of dimension $n\ge 3$. Suppose there exists a conformal map $\Phi:M\to S^n$. For any $p\in M$, there is unique minimal and positive Green's function for the conformal Laplacian with pole $p$. It is $C^\infty$ away from $p$ and satisfies
	\[LG=\delta_p\]
	in the sense of distributions.
	\end{prop}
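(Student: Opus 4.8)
The plan is to change conformal gauge so that the conformal Laplacian becomes a Schr\"odinger operator with a \emph{strictly positive} potential, and then run the classical exhaustion construction of a minimal Green's function, using the uniform spectral gap produced by that positive potential to rule out the ``parabolic'' (blow‑up) alternative. The hypotheses of local conformal flatness together with a conformal map $\Phi:M\to S^n$ are exactly what make this gauge change available.

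First I would reduce to a model metric. Since $\Phi:M\to S^n$ is conformal and $\dim M=n$, the differential $d\Phi$ has no kernel, so $\Phi$ is a local diffeomorphism and $h:=\Phi^*g_{S^n}$ is a genuine Riemannian metric making $(M,h)$ locally isometric to the round sphere; in particular $R_h\equiv n(n-1)$. Writing $h=\varphi^{4/(n-2)}g$ with $\varphi\in C^\infty(M)$, $\varphi>0$, the conformal covariance identity $L_h\psi=\varphi^{-(n+2)/(n-2)}L_g(\varphi\psi)$ together with $dV_h=\varphi^{2n/(n-2)}dV_g$ shows that $G_g:=\varphi(p)\,\varphi(\cdot)\,G_h$ is the minimal, positive, $C^\infty(M\setminus\{p\})$ Green's function of $L_g$ with pole $p$ as soon as $G_h$ is that of $L_h$; the distributional identity $LG=\delta_p$, positivity, regularity and minimality all transfer. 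So it suffices to build $G_h$, where now $L_h=-\Delta_h+c$ with $c:=aR_h=\frac{n(n-2)}{4}>0$ for $n\ge 3$.

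Next I would carry out the construction for $L_h$. Fix a smooth precompact exhaustion $\Omega_1\Subset\Omega_2\Subset\cdots\nearrow M$ with $p\in\Omega_1$. Because $c>0$, on every $\Omega_i$ the form $\int(|\nabla u|^2+cu^2)$ is coercive on $H^1_0$, so the Dirichlet problem for $L_h$ is uniquely solvable, the maximum principle holds, and, crucially, $\lambda_1(L_h,\Omega_i)\ge c$ \emph{with a bound independent of $i$}. Using a conformally flat chart about $p$, take a parametrix $\Gamma\in C^\infty(M\setminus\{p\})$ supported in $\Omega_1$ with $L_h\Gamma=\delta_p+f$, $f\in C^\infty_c(\Omega_1)$; solve $L_hw_i=-f$ in $H^1_0(\Omega_i)$ and set $G_i:=\Gamma+w_i$, so that $L_hG_i=\delta_p$ in $\Omega_i$ and $G_i=0$ on $\partial\Omega_i$. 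The maximum principle gives $G_i>0$ on $\Omega_i\setminus\{p\}$, and the difference $G_{i+1}-G_i$ (which extends $L_h$-harmonically across $p$ and is $\ge 0$ on $\partial\Omega_i$) is $\ge 0$ on $\Omega_i$, so $w_i$ is increasing in $i$. Pairing $L_hw_i=-f$ with $w_i$ and using $\lambda_1\ge c$ yields $\|w_i\|_{H^1_0(\Omega_i)}\le c^{-1/2}\|f\|_{L^2}$ uniformly in $i$; interior Schauder estimates then bound $w_i$ in $C^{2,\alpha}_\loc$ uniformly, so by monotonicity $w_i\nearrow w\in C^\infty(M)$ with $L_hw=-f$, and $G_h:=\Gamma+w>0$ solves $L_hG_h=\delta_p$ and is smooth away from $p$. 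For minimality and uniqueness: any positive Green's function $\tilde G_h$ with pole $p$ satisfies $\tilde G_h\ge G_i$ on $\Omega_i$ (the difference extends $L_h$-harmonically across $p$ with nonnegative boundary values), hence $\tilde G_h\ge G_h$ upon letting $i\to\infty$; two minimal Green's functions then dominate each other and coincide. Transporting back through the second paragraph gives the statement for $L_g$.

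The main obstacle is the finiteness step: a priori the increasing sequence $\{G_i\}$ could tend to $+\infty$ everywhere --- precisely the parabolic behaviour of $-\Delta$ on $\R^2$ --- and nothing rules this out for the bare Laplacian; likewise one cannot expect $L_g$ itself to be positive without some input. The hypotheses enter exactly here: local conformal flatness plus the conformal map to $S^n$ force the model metric $h$ to have constant \emph{positive} scalar curvature, hence $L_h$ carries the strictly positive zeroth‑order term $c>0$, which delivers the uniform gap $\lambda_1(L_h,\Omega_i)\ge c$ (equivalently, the constant function $1$ is a global positive supersolution, $L_h 1=c>0$; pulled back, $\varphi$ is a positive supersolution for $L_g$). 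Everything else --- solvability and elliptic regularity for the Dirichlet problem, the local parametrix, and the maximum principle --- is standard.
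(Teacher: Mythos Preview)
Your argument is correct and takes a genuinely different route from the paper's.

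Both proofs run the same exhaustion $\Omega_i\nearrow M$, build Dirichlet Green's functions $G_i$, and use the maximum principle for monotonicity and minimality; the divergence is in the finiteness step. The paper pulls back the explicit round-sphere Green's function $H$ (with pole $\Phi(p)$) through $\Phi$ to obtain a concrete \emph{barrier} $\overline G=|d\Phi|^{(n-2)/2}H\circ\Phi$, which satisfies $L\overline G=\sum_{q\in\Phi^{-1}(\Phi(p))}a_q\delta_q$ with $a_p=1$, and then argues $G_i\le \overline G$ pointwise (with a separate Harnack argument near the other preimages). You instead change gauge to $h=\Phi^*g_{S^n}$, so that $L_h=-\Delta_h+\tfrac{n(n-2)}{4}$ has a strictly positive zeroth-order term, and use the resulting uniform coercivity $\lambda_1(L_h,\Omega_i)\ge c$ to bound the smooth correction $w_i=G_i-\Gamma$ in $H^1$, hence in $C^{2,\alpha}_{\loc}$.

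What each buys: your coercivity argument is cleaner and self-contained, and it makes explicit \emph{why} $L$ is a positive operator on bounded LCF domains immersing in $S^n$ --- a fact the paper only cites. The paper's barrier approach, on the other hand, produces the function $\overline G$ as a by-product, and this is immediately reused in the next lemma (Lemma~\ref{3.2}), where the ratio $v=G/\overline G$ is analyzed; with your proof one would simply introduce $\overline G$ separately there, which is no extra work. Both arguments use the hypothesis in essentially the same way: the conformal map to $S^n$ is what supplies either the barrier $\overline G$ or the positive potential $c$.
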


Here \emph{minimal} means that if $G'$ is any other positive Green's function for the conformal Laplacian with pole $p$, $G\le G'$ on $M\setminus\{p\}$. The proof of Proposition \ref{thm:D1} is outlined in \cite[Corollary 1.3]{SY88} and \cite[Proposition 2.4]{SYbook}. It is important to note that $L$ is a positive operator on a bounded LCF domain which immerses in the sphere \cite[Theorem 2.2]{SYbook}. 

\begin{proof}[Sketch of Proof]
	Let $\Omega_i\nearrow M$ be a compact exhaustion with $p\in \Omega_i$ a common point. On each domain $\Omega_i$ there is a unique Dirichlet Green's function $G_i$ for the conformal Laplacian with pole $p$. Near $p$, we have \[G_i(x)=\frac{c_n}{r^{n-2}}(1+o(1)),\] where $c_n^{-1}=n(n-1)|B^n_1|$.
	This is classical, but a careful proof can be found in \cite[Appendix A]{Hebey}. Since each $G_i$ has the same growth rate at $p$, the maximum principle shows $G_i\le G_{i+1}$ pointwise. We define $G$ to be the limit of this monotone sequence -- a priori this is not finite away from $p$. For this we need a barrier. 
	
	Since $\Phi:M\to S^n$ is conformal, $\Phi^*g_0=|d\Phi|^2g$, where $g_0$ is the round metric on $S^n$. Let $y=\Phi(p)$ and denote by $H$ the conformal Green's function of $(S^n,g_0)$ with pole $y$. One can check that 
	\[L\overline H=\sum_{q\in \Phi^{-1}(y)}|d\Phi(q)|^{-\frac{n-2}{2}}\delta_q\] in the sense of distributions,
	where $\overline H:=|d\Phi|^{\frac{n-2}{2}}H\circ\Phi$. Rescaling by an overall constant, we obtain a function $\overline G$ with poles located at every point in $\Phi^{-1}(y)$ and such that 
	\[L\overline G=\sum_{q\in \Phi^{-1}(y)}a_q\delta_q,\quad a_q>0,\quad a_p=1.\]
	
	Using $\overline G$ as a barrier, it can be shown that $G<\infty$ away from $\Phi^{-1}(y)$. That $G$ is bounded near each point in $\Phi^{-1}(y)\setminus\{p\}$ follows from the Harnack inequality applied to the sequence $\{G_i\}$. Since the sequence $\{G_i\}$ converges uniformly away from $p$, it is not hard to now show that $LG=\delta_p$. 
\end{proof}

\begin{rk}\label{rk:a}
	It follows from this construction that 
	\[G=\frac{c_n}{r^{n-2}}(1+o(1)). \]
	Indeed, $G$ is dominated by $\overline G$, which has this growth. 
\end{rk}

\subsection{Lohkamp compactification}

A significant simplification in the proof of the positive mass theorem was obtained by Lohkamp \cite{L99}, by reducing it to the torus rigidity theorem of Schoen--Yau and Gromov--Lawson. His argument was simplified further by Corvino--Pollack \cite{CP11} (see also \cite{Lee}). Here we observe the following additional fact.

\begin{prop}\label{Lohkamp}
	Let $g$ be a metric on $\Bbb R^n\setminus B$ with scalar curvature $R_g\ge 0$ and such that $g=u^{4/(n-2)}\delta$, where $u:\Bbb R^n\setminus B\to (0,\infty)$ is a smooth function satisfying 
	\[u(x)=1+cr^{2-n}+o(r^{2-n}),\]
	with $c<0$. Then the metric admits a Lohkamp compactification. 
\end{prop}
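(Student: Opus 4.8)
The plan is to exploit that for a conformally flat metric $g=u^{4/(n-2)}\delta$ the sign of the scalar curvature is governed by a \emph{linear} condition: since $R_g=-\tfrac{4(n-1)}{n-2}\,u^{-\frac{n+2}{n-2}}\,\Delta u$ for the Euclidean Laplacian, the hypothesis $R_g\ge 0$ on $\mathbb R^n\setminus B$ says exactly that $u$ is superharmonic there, and the assumption $c<0$ (strictly negative mass) says exactly that $u\to 1$ \emph{from below} near infinity. The idea is to replace $u$, outside a large coordinate ball, by a smoothed minimum of $u$ with a constant $1-\ve$ slightly smaller than $1$: because $u$ eventually exceeds $1-\ve$, this modification is \emph{identically} the constant near infinity, so $g$ is deformed to a metric that is exactly flat outside a compact set, while superharmonicity (hence $R\ge 0$) is preserved; one then caps the flat end off with a flat $T^n\setminus B^n$.

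First I would fix parameters using the uniform expansion $u(x)=1-|c|r^{2-n}(1+o(1))$: choose radii $R_3<R_1$ and constants $0<\delta<\ve$ so that $\sup_{\{r=R_3\}}u<(1-\ve)-\delta$ and $\inf_{\{r\ge R_1\}}u>(1-\ve)+\delta$ — the first is possible since $|c|R_3^{2-n}$ dominates $\ve+\delta$ once $R_3$ is large, the second because $\ve-\delta>0$ and $u\to1$ (using the minimum principle for superharmonic functions on the exterior region). Let $\psi\in C^\infty(\mathbb R)$ be concave with $\psi(t)=t$ for $t\le-\delta$, $\psi(t)=0$ for $t\ge\delta$, and $\psi''<0$ on some subinterval of $(-\delta,\delta)$; set $m(a)=(1-\ve)+\psi(a-(1-\ve))$. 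Define $\tilde u:=u$ on $\{r\le R_3\}$ and $\tilde u:=m(u)$ on $\{r\ge R_3\}$. The two definitions agree on a neighbourhood of $\{r=R_3\}$, so $\tilde u\in C^\infty(\mathbb R^n\setminus B)$, $\tilde u>0$, $\tilde u=u$ near $\partial B$, and $\tilde u\equiv 1-\ve$ on $\{r\ge R_1\}$. Since $1-\ve$ is harmonic and $\psi$ is concave with $\psi'\ge 0$,
\[\Delta m(u)=\psi''(u-(1-\ve))\,|\nabla u|^2+\psi'(u-(1-\ve))\,\Delta u\le 0,\]
so $\tilde g:=\tilde u^{4/(n-2)}\delta$ has $R_{\tilde g}\ge 0$, agrees with $g$ near $\partial B$, and is a constant multiple of $\delta$, hence flat, on $\{r\ge R_1\}$.

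Next I would check $R_{\tilde g}>0$ somewhere. Because $c\ne 0$, $u$ is not locally constant near infinity, so I can arrange the parameters so that the level set $\{u=1-\ve\}$ passes through a point $x_0$, with $R_3<|x_0|<R_1$, at which $\nabla u(x_0)\ne 0$ and $u(x_0)-(1-\ve)$ lies in the subinterval where $\psi''<0$; then $\Delta\tilde u<0$ in a neighbourhood of $x_0$. Finally, after a constant rescaling of coordinates $\tilde g$ is literally the Euclidean metric on some $\{r\ge R_1'\}$; gluing this exterior region to $T^n\setminus B^n$ with its flat metric along a flat collar yields a closed manifold carrying a smooth metric with $R\ge 0$ and $R>0$ on an open set, which is a Lohkamp compactification of $g$.

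The main obstacle is not any single computation but the bookkeeping in the parameter choice: one must select $R_3,R_1,\ve,\delta$ (and $x_0$) simultaneously so that $m(u)$ coincides with $u$ near the inner boundary, coincides with a constant near infinity, \emph{and} is strictly superharmonic somewhere. It is precisely $c<0$ that makes this compatible — negative mass forces $u$ to dip below $1$ at large radius, opening the ``room'' between $\sup_{\{r=R_3\}}u$ and $\lim u=1$ in which the transition interval $[1-\ve-\delta,1-\ve+\delta]$ must sit; for positive mass ($c>0$) this room is absent and the construction breaks down, consistently with the positive mass theorem.
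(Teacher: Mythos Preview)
Your proof is correct and follows essentially the same strategy as the paper's: observe that $R_g\ge 0$ forces $u$ to be Euclidean-superharmonic, then replace $u$ by a smoothed minimum with a constant just below $1$ via a concave nondecreasing cutoff, so that superharmonicity (hence $R\ge 0$) is preserved and is strict at a point where the cutoff is strictly concave and $\nabla u\ne 0$. Your cutoff satisfies $m(a)=a$ for small $a$, so $\tilde u=u$ near $\partial B$, whereas the paper's $\Psi$ has $\Psi(t)=t+2\ve$ there; your choice is slightly cleaner for gluing the modified end back to the rest of the manifold in the application. One expository slip to fix: the phrase ``$|c|R_3^{2-n}$ dominates $\ve+\delta$ once $R_3$ is large'' is backwards, since $|c|R_3^{2-n}\to 0$ as $R_3\to\infty$ --- the correct order is to fix $R_3$ large enough that the expansion is accurate and then choose $\ve+\delta$ small relative to $|c|R_3^{2-n}$.
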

  By ``Lohkamp compactification" we mean that $g$ can be made totally flat outside of some large compact set without changing the sign on the scalar curvature, and without making it identically zero. By quotienting in some large cube, we compactify the end to a large flat torus component. 
  
  This proposition says harmonic approximation is not necessary -- superharmonic approximation suffices as long as we know that the conformal factor has a Kelvin-type asymptotic expansion.
  
\begin{proof}
	By the formula for scalar curvature under a conformal change,
	\[-\Delta_\delta u=c_nu^{\frac{n+2}{n-2}}R_g\ge 0,\]
	i.e. $u$ is superharmonic. We can find $\ve>0$ and $\rho_1,\rho_2>0$ so that $u(x)>1-\ve$ when $|x|>\rho_1$ and $u(x)<1-3\ve$ when $|x|<\rho_2$. The function $\min\{1-2\ve,u(x)\}$ is weakly superharmonic, so we utilize a smoothing of the minimum.  
	
	Explicitly, following Lee \cite{Lee}, define a $C^\infty$ function $\Phi:\Bbb R\to\Bbb R$ with the properties
	\[\Psi(t)=\begin{cases}
	t+2\ve \quad &\text{if } t<1-3\ve \\
	1 \quad &\text{if }  t>1-\ve \\
	\end{cases},\]
	\[\Psi'\ge 0,\text{ and, } \Psi''\le 0.\]
The observation that we may choose $\Psi$ nondecreasing is crucial here. Now define $\tilde u=\Psi(u)$. We compute
	\[\Delta \tilde u=\Psi''(u)|Du|^2+\Psi'(u)\Delta u\le 0,\]
	as desired. This is not identically zero because the first term is non-zero somewhere, i.e., there is a point where $Du\neq 0$ and $\Psi''(u)< 0$. Using $\tilde u$ as a conformal factor, we can carry out the Lohkamp compactification.
\end{proof}

\subsection{Finishing the proof} We will eventually use the Green's function $G$ as a conformal factor. In the following lemma we establish a precise decay rate.

\begin{lem}\label{3.2}
	The function $v=G/\overline G$ is a positive harmonic function wrt $\overline g=\overline G^\frac{4}{n-2}g$, smooth across $p$, and for any normal coordinate system $\{x^i\}$ centered at $p$, 
	\[v=1+cr^{n-2}+o(r^{n-2})\]
	for some constant $c$, where $r=|x|$. 
\end{lem}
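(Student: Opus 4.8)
The plan is to exploit the conformal covariance of the conformal Laplacian together with the matched local asymptotics of $G$ and $\overline G$ at the pole $p$.

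First I would record the covariance identity $L_{\overline g}(\psi)=\overline G^{-\frac{n+2}{n-2}}L_g(\overline G\,\psi)$, valid for $\overline g=\overline G^{4/(n-2)}g$ and any function $\psi$. Taking $\psi=v=G/\overline G$, so that $\overline G\,\psi=G$, and using $L_gG=\delta_p$, this gives $L_{\overline g}v=0$ on $M\setminus\Phi^{-1}(y)$ (in particular away from $p$). Next I would observe that $R_{\overline g}\equiv 0$ there: by the conformal transformation of scalar curvature one has $R_{\overline g}=\tfrac1a\,\overline G^{-\frac{n+2}{n-2}}L_g\overline G$, and $L_g\overline G=\sum_{q\in\Phi^{-1}(y)}a_q\delta_q$ vanishes off the poles. (Equivalently, $\overline g=\mathrm{const}\cdot\Phi^*\!\big(H^{4/(n-2)}g_0\big)$ and $H^{4/(n-2)}g_0$ is flat Euclidean space, so $\overline g$ is the pullback of a flat metric under the developing map; this also exhibits $p$ as the point at infinity of an asymptotically flat, scalar-flat end.) Hence $L_{\overline g}=-\Delta_{\overline g}$, so $v$ is $\overline g$-harmonic. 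Positivity is immediate from $G,\overline G>0$ (and in fact $0<v\le 1$ since $G\le\overline G$ by Remark~\ref{rk:a}, though this is not needed).

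It remains to analyze $v$ near $p$. Set $h=G-\overline G$. Since $\overline G$ has leading coefficient $1$ for its pole at $p$ (i.e.\ $L_g\overline G=\delta_p$ near $p$) and likewise $L_gG=\delta_p$, we get $L_gh=0$ on a punctured ball $B_\rho(p)\setminus\{p\}$. By Remark~\ref{rk:a} and the analogous expansion for $\overline G$, both Green's functions equal $\tfrac{c_n}{r^{n-2}}(1+o(1))$, so $h=o(r^{2-n})$. The fundamental solution of $L_g$ at $p$ is of size comparable to $r^{2-n}$, so a solution of $L_gh=0$ on the punctured ball that is $o(r^{2-n})$ has a removable singularity at $p$; by elliptic regularity $h\in C^\infty(B_\rho(p))$, in particular $h=h(p)+O(r)$. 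Dividing by $\overline G=\tfrac{c_n}{r^{n-2}}(1+o(1))$ yields
\[v-1=\frac{h}{\overline G}=\frac{h(p)}{c_n}\,r^{n-2}+o(r^{n-2}),\]
which is the claimed expansion with $c=h(p)/c_n$; the directional dependence of $h$ only enters at order $r^{n-1}=o(r^{n-2})$, so the leading term is a genuine constant. For the smoothness claim, in the $\overline g$-picture $p$ is the point at infinity of an asymptotically flat scalar-flat end and $v$ is a bounded $\overline g$-harmonic function there, so a Kelvin transform (turning the end into a punctured ball) together with the removable-singularity behavior above shows that $v$ extends smoothly across $p$ with $v(p)=1$.

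The main obstacle I expect is precisely this local analysis at $p$: justifying cleanly that $h=G-\overline G$ has a removable singularity — which needs removable-singularity/fundamental-solution theory for $L_g$ rather than the flat Laplacian — and checking that the subleading terms of $\overline G$ near $p$ do not spoil the clean exponent $r^{n-2}$ in the expansion of $v-1$. The covariance computations of the first two paragraphs are routine.
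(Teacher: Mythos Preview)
Your argument is correct and follows essentially the same route as the paper: both establish harmonicity of $v$ via conformal covariance and the flatness of $\overline g$, then obtain the expansion by showing $G-\overline G$ solves $L_g(\cdot)=0$ near $p$ with growth $o(r^{2-n})$, invoke removable singularities to get smoothness of this difference, and divide by $\overline G\sim c_n r^{2-n}$. Your version is in fact slightly cleaner, since you work directly with $h=G-\overline G$ rather than first setting $h=v-1$ and then passing to $h\overline G$ (the paper's text has a minor typo here, writing $hG$ where $h\overline G$ is meant).
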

\begin{proof}
	We outline the steps already contained in \cite{SY88, SYbook}. Let $\pi:S^n\setminus \{y\}\to \Bbb R^n$ be the stereographic projection. Since the Green's function $H$ on $(S^n,g_0)$ is actually just the conformal factor associated to stereographic projection, $\pi^*\delta=H^\frac{4}{n-2}g_0$, where $\delta$ is the flat metric on $\Bbb R^n$. It follows that $\Phi^*\pi^*\delta=\overline g$.

	It is easy to see that $v$ is harmonic with respect to $\overline g$ and also that 
	\[v(p)=\lim_{x\to p}\frac{G(x)}{\overline G(x)}=1,\]
	cf. Remark \ref{rk:a}. Setting $h=v-1$, $L(hG)=0$ since $hG=\overline G -G$. It follows from the removable singularities theorem for elliptic equations that $hG$ extends smoothly over $p$. For convenience we set $f=hG$. Then $h=fG^{-1}$. Since $f$ is $C^\infty$ and we have the expansion $G=c_nr^{2-n}+o(r^{2-n})$, it follows that $h=cr^{n-2}+o(r^{n-2})$.  
\end{proof}

\begin{proof}[Proof of Theorem \ref{SYL}]
	As explained in \cite{SY88, SYbook}, $\Phi$ is injective and $\partial \Phi(M)$ has zero Newtonian capacity if and only if $v\equiv 1$. Here we reduce this to the non-existence of $R>0$ on $T^n\csum X^n$ using the original idea of reducing it to the aforementioned positive mass conjecture with arbitrary ends.
	
	The conformal blowup $\tilde g=G^\frac{4}{n-2}g$ might not be complete since $G(x)\to 0$ as $d(x,p)\to\infty$ \cite[Lemma 3.2]{SY88}. To rectify this, we add a small parameter, 
	\[\tilde g_\ve=(G+\ve)^\frac{4}{n-2}g,\] is a complete metric on $\tilde M=M\setminus\{p\}$. 
	The formula for scalar curvature under a conformal deformation implies 
	\[R(\tilde g_\ve)=(G+\ve)^{-\frac{n+2}{n-2}}L_g(G+\ve)= (G+\ve)^{-\frac{n+2}{n-2}}\ve R(g)\ge 0.\]
	
	We now show that Proposition \ref{Lohkamp} is applicable. Since $\overline g$ is legitimately flat, the deviation of $\tilde g_\ve$ from flatness is determined entirely by $G$. Indeed in a neighborhood of $p$, 
	\[
	\tilde g_\ve	= \left(v+\frac{\ve}{\overline G}\right)^\frac{4}{n-2}\overline g= \Phi^*\pi^*\left(v_\ve^\frac{4}{n-2}\delta\right),\]
	where $v_\ve:\Bbb R^n\setminus B\to (0,\infty)$ satisfies
	\[v_\ve\circ \pi\circ\Phi = v+\frac{\ve}{\overline G}. \]
	By the expansion of $v$ shown in Lemma \ref{3.2} and the expansion $\overline G=c_nr^{2-n}+o(r^{2-n})$ it follows (after performing a coordinate inversion) that
	\[v_\ve =1+c_\ve r^{2-n}+o(r^{2-n})\] for a constant $c_\ve$. If $c_\ve<0$, we apply Proposition \ref{Lohkamp}, which is applicable to $v_\ve^\frac{4}{n-2}\delta$. The torus compactification can be pulled back to $(\tilde M,\tilde g_\ve)$ by $\pi\circ\Phi$. So we obtain a complete metric on a manifold $X^n$, diffeomorphic to the connected sum of $M^n$ and $T^n$, with nonnegative but not identically zero scalar curvature. By a theorem of Kazdan \cite{K82}, $X$ admits a complete PSC metric. This contradicts the conjecture that $T^n \csum M^n$ admits no $R>0$, so $c_\ve\ge 0$. 
	
	We can now let $\ve\to 0$ to conclude that the ADM mass of the blowup using $G$ itself is nonnegative. As shown in \cite{SY88, SYbook}, this suffices to show $v\equiv 1$. 
	
	When $n=3$, we are done, since $T^3\# M^3$ does not admit a complete PSC metric. 
	
	When $4\le n\le 6$, we show that if the original $(M,g)$ is $(\lambda,\rho)$-contracible, then the previous argument produces a $(\overline\lambda,\overline\rho)$-contractible PSC manifold with a torus component. Then we can use Theorem \ref{thm:main} (ii) to conclude our present theorem.
	
	Indeed, since the scalar curvature of $g$ is bounded below, $G(x)\to 0$ as $d(x,p)\to \infty$ \cite[Lemma 3.2]{SY88}. It follows that $G(x)\le C$ for some constant $C$ whenever $d(x,p)\ge 1$. So away from the asymptotically flat end, $(\tilde M,\tilde g_\ve)$ is bi-Lipschitz equivalent to  $(M,g)$. Additionally, the AF end of $(\tilde M,\tilde g_\ve)$ is homogeneously regular, so $(\tilde M,\tilde g_\ve)$ is globally $(\lambda',\rho')$-contractible for some $\lambda'$ and $\rho'$. 
	
	The next modification is the Lohkamp compactification procedure. However, since this step only modifies the AF end, we obtain a $(\lambda'',\rho'')$-contractible metric on $X$. Finally, Kazdan's deformation is explicitly a bi-Lipschitz equivalence, so we finally obtain a complete $(\lambda''',\rho''')$-contractible metric on $X$ with PSC.
\end{proof}

\section{Proof of Theorem \ref{Loh}}
The argument is modelled on that for Theorem \ref{thm:main} (i) except that several complications arise due to MOTS replacing minimal hypersurfaces. Recall that a closed two-sided surface $\Sigma$ immersed in $(M^n,g,k)$ is a \emph{marginally outer trapped surface} (\emph{MOTS}) if the null expansion $\theta^+|_{\Sigma}$ computed in the $l^+$ direction satisfies $\theta^+|_{\Sigma}=H+\text{tr}_{\Sigma}k=0$, where $l^+$ is a null normal vector field $l^+=n+\nu$ and $\nu$ its outward pointing normal in $M^n$. \\ \indent 
Let $\{X_i\}$ be a compact exhaustion of $T^3 \# X^3$, such that each $X_i$ contains $T^3 \cup U(\mathscr S)$. For each connected component of $\partial X_i$, fix a null normal vector field $l_i^+=\nu_i+n_i$ where $n_i$ is a future pointing timelike unit normal to the $X_i$, and $\nu_i$ is a spacelike unit normal for $\partial X_i$ which is pointing into $X_i$.\\ \indent
By a procedure similar to that in the proof of Theorem \ref{thm:main}, we can deform $g$ in a neighborhood of each connected component of $\partial X_i$ so that, in the new initial data, the null expansion on each component of $\partial X_i$ satisfies $\theta^+|_{\partial X_i}<0$, where $\theta^+|_{\partial X_i}$ is computed in the direction of $l_i^+$. \\ \indent 
One deforms $g$ to $f^{2}g$ in a neighborhood of $\partial X_i$. The deformation of $H$ was calculated previously. The mean curvature can be as negative as one wants by a reverse of the argument given above in the minimal surface case, i.e., pick $h'(0)$ negative. It remains to check that we can achieve $\hat{H}<-|\text{tr}_{\partial X_i}k|$ where $|\text{tr}_{\partial X_i}k|$ is now computed in the new initial data. This is clearly possible since $|\text{tr}_{\partial X_i}k|$ depends only on the absolute value of $g$ at $\partial X_i$ and not its derivative. Note that this deformation will in general destroy the positivity of $\mu-|J|$, but since that only occurs in a neighborhood of $\partial X_i$ which is sent to infinity, this causes no problems for the argument.\\ \indent
We now employ a result of Lokhamp in his work on the Spacetime Positive Mass Theorem in Sections 2 and 3 of \cite{L16}, which we briefly describe since these are not widely known. \\ \indent  
The first part of the argument involves re-distributing regions with positive $\mu-|J|$, cf. \cite[Corollary 2.4]{L16}. 
\begin{lem}\label{lem:L1}
Fix $(M,g,k)$. Suppose $\mu-|J|>0$ on some open $U\subset M$ and $\mu-|J|\geq 0$ outside. Let $W\subset M$ be an open with compact closure $\bar{W}\cap \bar{U}=\emptyset$ and let $(M,g',k')$ be an initial data set with $g'=g$, $k'=k$ outside $W$. Then, for any open connected $U^*$ such that $U\cup \bar{W}\subset U^*$, there is an $\varepsilon(U,U^*,\bar{W},g,h)>0$ so that if $\|g_1-g\|_{C^2}\leq \varepsilon$ and $\|k_1-k\|_{C^2}\leq \varepsilon$, there is a conformal transformation $v^{\frac{4}{n-2}}g$, $v^{\frac{2}{n-2}}k$ with $v=1$ outside $U^*$ such that $\mu-|J|>0$ in the new initial data in $U^*$. 
\end{lem}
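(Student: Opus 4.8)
The plan is to reduce the statement to a single scalar PDE solvability problem via the conformal transformation rule for the dominant energy scalar $\mu - |J|$, and then solve that equation by a variational/sub-supersolution argument exploiting the strict positivity of $\mu - |J|$ on $U$ and the compact separation between $\bar W$ and $\bar U$. First I would recall how $\mu$ and $J$ transform under the simultaneous conformal change $g \mapsto v^{4/(n-2)}g$, $k \mapsto v^{2/(n-2)}k$: the scalar $\mu$ transforms by the conformal Laplacian, $\mu_{\mathrm{new}} = v^{-(n+2)/(n-2)}\bigl(-\tfrac{4(n-1)}{n-2}\Delta_g v + \mu\, v\bigr)$ up to the terms coming from $\operatorname{tr}_g k$, while $|J|$ transforms with a controlled weight so that, after collecting terms, one gets an inequality of the schematic form $\mu_{\mathrm{new}} - |J_{\mathrm{new}}| \ge v^{-(n+2)/(n-2)}\bigl(-c_n\Delta_g v + (\mu - |J|)v\bigr)$ whenever $v$ is (say) subharmonic-type on the relevant region. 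The precise bookkeeping here is exactly what is carried out in \cite[\S2]{L16}, so I would cite Corollary 2.4 there for the computational core and focus on the structure of the argument.

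The next step is to choose the conformal factor. One wants $v \equiv 1$ outside $U^*$, $v > 0$ everywhere, and $v$ strictly superharmonic (in the $g$-conformal-Laplacian sense, i.e. $-c_n \Delta_g v + (\mu-|J|) v > 0$) throughout the region where the perturbation $W$ has destroyed positivity, while leaving the already-positive set $U$ undisturbed. Since $U \cup \bar W \subset U^*$ with $\bar W \cap \bar U = \emptyset$, there is room: pick a smooth bump $\psi$ supported in $U^*$, equal to a large constant on a neighborhood of $\bar W$, and vanishing near $\bar U$, and set $v = 1 + \delta\,G_{U,U^*}[\psi]$ or more simply solve the linear Dirichlet problem $-c_n\Delta_g v + (\mu-|J|)v = \delta\chi$ on $U^*$ with $v = 1$ on $\partial U^*$, where $\chi \ge 0$ is supported near $\bar W$. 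Positivity of the conformal Laplacian on the relevant domain (which holds since $\mu - |J| \ge 0$, and in fact the operator $-c_n\Delta_g + (\mu-|J|)$ is nonnegative) guarantees a unique positive solution $v$ close to $1$ when $\delta$ is small. On $U$ the source is zero and $v$ is $(\mu-|J|)$-harmonic, so the strict inequality $\mu - |J| > 0$ there persists after the conformal change once $v$ is $C^2$-close to $1$; near $\bar W$ the superharmonicity compensates for any bounded deficit of $\mu - |J|$; and elsewhere in $U^*$ one keeps $\mu - |J| \ge 0$.

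Finally I would incorporate the $C^2$-stability under the perturbations $g_1, k_1$. The point is that all the quantities entering the argument — the coefficients of the conformal Laplacian, the value of $\mu - |J|$ on the compact set $\overline{U^* \setminus U}$, and the operator norm bounds used for solvability — depend continuously on $(g,k)$ in $C^2_{\mathrm{loc}}$ on the compact set $\bar{U^*}$. So one first fixes the data $(g,k)$, extracts the margin of strict positivity $\min_{\bar U \cap \text{compact}} (\mu-|J|) =: m > 0$ and a lower bound on the first eigenvalue of the conformal Laplacian on $U^*$, chooses $\delta$ (hence $v$) accordingly, and then declares $\varepsilon = \varepsilon(U,U^*,\bar W, g, k)$ small enough that replacing $(g,k)$ by $(g_1,k_1)$ with $\|g_1-g\|_{C^2}, \|k_1-k\|_{C^2}\le\varepsilon$ changes each of these by at most $m/2$; the same $v$ (or its analogue solved with the perturbed operator, still $C^2$-close) then works. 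The main obstacle I anticipate is not any single step but the transformation law: getting the $|J|$ term to transform in a way that is genuinely controlled by a \emph{pointwise} inequality rather than just in some integral sense — this is the technical heart, and it is why the lemma is quoted from \cite{L16} rather than reproved; I would lean on \cite[Corollary 2.4]{L16} for this and present the domain-geometry and stability parts in detail.
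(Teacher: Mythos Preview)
Your overall strategy---reduce to the Laplacian term in the conformal transformation law and then manufacture a conformal factor that is superharmonic where the deficit lives---is the right mechanism, and you correctly identify the computational core as lying in \cite[Corollary~2.4]{L16}. However, the paper (following Lohkamp) takes a more elementary route than you do: rather than solving a linear Dirichlet problem for $v$, it constructs $v$ \emph{explicitly} as $1$ plus a carefully designed cutoff function supported in $U^*$. After writing out the transformation
\[
\mu-|J|\;\longmapsto\; -\bigl(\gamma_n \Delta_g v+R_gv\bigr) v^{-\frac{n+2}{n-2}} - v^{-\frac{4}{n-2}}\bigl(|k|_g^2-(\mathrm{tr}_gk)^2\bigr) - v^{-\frac{4}{n-2}}|J|_g,
\]
one sees that the only term not pointwise controlled by the original $\mu-|J|$ (up to factors of $v$ close to $1$) is $-\gamma_n\Delta_g v$. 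A direct estimate on the Laplacian of a suitably scaled bump then does the job, with the strict positivity on $U$ supplying the margin to absorb the small negative contributions the cutoff creates elsewhere. No PDE solvability or spectral positivity is invoked.

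Your Dirichlet-problem approach has a genuine technical snag: the solution of $-c_n\Delta_g v+(\mu-|J|)v=\delta\chi$ on $U^*$ with boundary value $1$ will in general not extend smoothly by $1$ across $\partial U^*$, so the resulting $v$ is only Lipschitz there and the conformal change is not smooth. You would need either to mollify near $\partial U^*$ (which reintroduces the cutoff-style argument you were trying to avoid) or to work with a compactly supported source and the whole-space Green's operator---but then you lose $v\equiv 1$ outside $U^*$. A second, smaller point: your schematic inequality $\mu_{\mathrm{new}}-|J_{\mathrm{new}}|\ge v^{-(n+2)/(n-2)}\bigl(-c_n\Delta_g v+(\mu-|J|)v\bigr)$ is not quite right, because the $k$- and $J$-terms come with the weight $v^{-4/(n-2)}$ rather than $v^{-(n+2)/(n-2)}$; this is harmless since $v\approx 1$, but it means the operator you are inverting is not literally the one appearing in the transformation law. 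The explicit-cutoff approach sidesteps both issues.
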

This is shown by an explicit construction of a conformal factor, which itself is based on a specific cut-off function. After computing the change in $\mu-|J|$ under the conformal deformation $g\to v^{\frac{4}{n-2}}$, $k\to v^{\frac{2}{n-2}}k$
\begin{gather*}
\mu-|J|\to -\left(\gamma_n \Delta_g v+R_gv\right) v^{-\frac{n+2}{n-2}} \\
-\left( v^{-\frac{4}{n-2}}|k|_g^2-v^{-\frac{4}{n-2}}(\text{tr}_gk)^2\right)-|J(v^{\frac{4}{n-2}}g,v^{\frac{2}{n-2}}k)|_{v^{\frac{4}{n-2}}g}
\end{gather*}
where $\gamma_n=\frac{4(n-1)}{n-2}$ and $|J(v^{\frac{4}{n-2}}g,v^{\frac{2}{n-2}}k)|_{v^{\frac{4}{n-2}}g}=v^{-\frac{4}{n-2}}|J(g,k)|_g$, it is clear that the key estimate comes from the Laplacian term. After a straightforward estimate of this term, the result follows by constructing the right conformal factor. \\ \indent 
Lemma \ref{lem:L1} leads to the following, cf. \cite[Proposition 3.2]{L16}.
\begin{prop}\label{prop:L2}
For any given $\varepsilon,\gamma>0$, and $l\in \Bbb Z_{\geq 4}$, and any two disjoint parallel $T^{n-1}$ lying at $a_1,(\neq)a_2\in S^1$ in the flat portion of $\hat{T}^n$, there is a conformal deformation on $g_{n,\gamma}$ and $h_{n,\gamma}$ 
\[g_n(\varepsilon,\gamma,l|_{a_1,a_2}) = v^{\frac{4}{n-2}}_{\varepsilon,\gamma,l|_{a_1,a_2}} g_{n,\gamma} \: \: \: \: \emph{and} \: \: \: \: k(\varepsilon,\gamma,l|_{a_1,a_2})= v^{\frac{2}{n-2}}_{\varepsilon,\gamma,l|_{a_1,a_2}}  k_{n,\gamma}
\] 
such that 
\begin{enumerate}[{(i)}]
\item $\|g(\varepsilon,\gamma,l|_{a_1,a_2})-g_{n,\gamma}\|_{C^l(T^n\csum X^n)}\leq \varepsilon$,
\item $\mu-|J|$ is $>0$ in the new data on $T^n\csum X^n$, and
\item both tori $T^{n-2} \subset \partial \hat{Y}$ are strictly mean convex (away from the surgery sphere) in the in the new data.
\end{enumerate}
\end{prop}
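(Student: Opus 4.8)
The statement (Proposition \ref{prop:L2}) is essentially a bookkeeping combination of Lemma \ref{lem:L1} with the mean-convexity deformation already performed at the beginning of the section, applied to a carefully chosen pair of collar neighborhoods inside the flat part of $\hat T^n$. I would proceed as follows. First, fix the two parallel tori $T^{n-1}_{a_1}$ and $T^{n-1}_{a_2}$ in the flat region of $\hat T^n$, and cut $S^1$ at these two slices, so that the flat region $\hat T^n\setminus U_+(\mathscr S)$ gets divided into a compact ``middle'' cylinder $C=[a_1,a_2]\times T^{n-2}$ (containing the surgery collar side toward $Y$) and the rest. The target is to (a) keep the metric $C^l$-close to $g_{n,\gamma}$, (b) make $\mu-|J|>0$ everywhere on $T^n\csum X^n$, and (c) make both boundary tori $T^{n-2}\subset\partial\hat Y$ strictly mean convex with respect to the relevant outward null-normal $\ell^+$.

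Second, I would do the mean-convexity step first and locally: exactly as in the minimal-surface deformation in Section 2 (and as recalled at the start of Section 4), conformally change $g_{n,\gamma}\to f^2 g_{n,\gamma}$ in a thin neighborhood of each of the two tori, with $f\equiv 1$ outside that neighborhood, choosing the cutoff profile $h$ with $h(0)$ and $h'(0)$ tuned so that the new mean curvature $\hat H$ of each torus satisfies $\hat H<-|\mathrm{tr}_{T^{n-2}}k|$ in the new data; since $|\mathrm{tr}_{T^{n-2}}k|$ depends only on the pointwise value of $g$ at the torus, not its derivatives, this can always be achieved, and by taking the neighborhood thin and $|h-1|$ small we keep the $C^l$-deviation below $\varepsilon/2$. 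Call the resulting data $(g_1,k_1)$; note $k$ need not be deformed here (only $g$), but if one also conformally rescales $k$ for consistency the estimate is the same. This deformation may destroy $\mu-|J|\geq 0$, but only inside the two thin collars, which is exactly the role played by the open set $W$ in Lemma \ref{lem:L1}.

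Third, apply Lemma \ref{lem:L1} to restore positivity. Here $U$ is an open set in $X^n$ where $\mu-|J|>0$ strictly (which exists because the island hypothesis gives $\mu-|J|\not\equiv 0$ on $X^n\cup U(\mathscr S)$), $W$ is the union of the two thin collars where the deformation was performed, $U^*$ is a connected open set containing $U\cup\bar W$ but still avoiding the regions we must not touch, and $(g_1,k_1)$ is the perturbed data — which, by the second step, satisfies $\|g_1-g_{n,\gamma}\|_{C^2}\leq\varepsilon(U,U^*,\bar W,g_{n,\gamma},k_{n,\gamma})$ provided we chose our collars thin enough at the outset (this is where one has to be slightly careful: the $\varepsilon$ in Lemma \ref{lem:L1} depends on $\bar W$, so one fixes the \emph{shape} of $W$ first, extracts $\varepsilon$, and only then shrinks the conformal profile). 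Lemma \ref{lem:L1} then yields $v$ with $v\equiv 1$ outside $U^*$ and $\mu-|J|>0$ in $U^*$ in the data $(v^{4/(n-2)}g_1, v^{2/(n-2)}k_1)$; outside $U^*$ we still have $\mu-|J|\geq 0$ from the original island (untouched there), and we arrange $U^*$ to contain all of $X^n\cup U(\mathscr S)$ plus $W$, so $\mu-|J|>0$ on the whole support. Finally, since $v\equiv 1$ near the two boundary tori, the strict mean convexity arranged in step two is preserved verbatim, and the composite conformal factor $v_{\varepsilon,\gamma,l|_{a_1,a_2}}$ is the product of the step-two factor and $v$; a final shrink guarantees the total $C^l$-bound is $\leq\varepsilon$.

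\textbf{Main obstacle.} The delicate point is the order of quantifiers against the dependence of $\varepsilon$ in Lemma \ref{lem:L1} on the compact region $\bar W$: one cannot first demand a tiny perturbation and then hope $W$ is small enough, because $W$ must be fixed before $\varepsilon$ is known. The resolution is to fix the combinatorial/geometric data ($U$, $U^*$, the shape of the two collars $W$) once and for all, extract the constant $\varepsilon_0=\varepsilon(U,U^*,\bar W,g_{n,\gamma},k_{n,\gamma})$ from Lemma \ref{lem:L1}, and only then choose the conformal profile $h$ in step two so small that the mean-convexity deformation moves the data by less than $\min\{\varepsilon_0,\varepsilon/2\}$ in $C^2$ (hence, by interpolation and the explicit form of $h$, by less than $\varepsilon$ in $C^l$ as well). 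A secondary but routine check is that the two conformal factors compose correctly and that $\mathrm{tr}_{T^{n-2}}k$ transforms in the controlled way recorded in the $\mu-|J|$ transformation formula displayed above, so that strict mean convexity with respect to $\ell^+$ — an $H + \mathrm{tr}_{T^{n-2}}k$ condition — survives the second conformal change because that change is trivial near the tori.
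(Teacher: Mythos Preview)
Your overall two–step strategy (first bend the tori mean convex by a local conformal change, then invoke Lemma \ref{lem:L1} to redistribute and restore $\mu-|J|>0$) is exactly the scheme the paper follows, so the architecture is fine. The paper's own sketch reads: construct the conformal factor via specific cut-offs near the two $T^{n-1}$; this produces a negative contribution to $\mu-|J|$; compensate using Lemma \ref{lem:L1}; then \emph{(iii) follows by explicit computation of the second fundamental form of the two $T^2$ under the said conformal transformation}.

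There is, however, a genuine gap in your justification of (iii). You write ``since $v\equiv 1$ near the two boundary tori, the strict mean convexity arranged in step two is preserved verbatim.'' This is inconsistent with your own setup. In Lemma \ref{lem:L1} one only gets $v\equiv 1$ \emph{outside} $U^*$, and you have (correctly, and necessarily) taken $U^*\supset \bar W$, where $W$ is the union of the thin collars in which the mean-convexity deformation was performed. Those collars are neighborhoods of the tori $T^{n-1}_{a_1},T^{n-1}_{a_2}$, so the tori lie inside $\bar W\subset U^*$; hence nothing in Lemma \ref{lem:L1} forces $v\equiv 1$ there. Your ``Main obstacle'' paragraph addresses the order-of-quantifiers issue but not this one.

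The paper (following Lohkamp) does not try to make $v$ trivial near the tori. Instead, mean convexity is established by computing the second fundamental form of $T^{n-1}_{a_i}$ directly under the \emph{composite} conformal factor: the profile of the cut-off near each torus is what drives $H$ to the desired sign, and the redistribution factor from Lemma \ref{lem:L1} is built so that the second fundamental form computation still yields strict mean convexity. If you want to keep your modular presentation, you must either (a) carry out that explicit second fundamental form computation for the composite factor, or (b) extract from the construction in Lemma \ref{lem:L1} a quantitative $C^1$-smallness of $v-1$ near the tori sufficient to preserve the strict inequality $\hat H + \mathrm{tr}_{T^{n-1}}k>0$; the statement of Lemma \ref{lem:L1} as given does not hand you (b) for free.

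A smaller point: you assert $\mu-|J|>0$ ``on the whole support'' by taking $U^*\supset X^n\cup U(\mathscr S)\cup W$, but outside $U^*$ (in the purely flat part of $\hat T^n$ away from the collars) the data is untouched and has $\mu-|J|=0$, so (ii) as literally stated is not achieved on that region by your argument. For the application this is harmless (the MOTS lives between the barriers), but you should either enlarge $U^*$ to all of $T^n\csum X^n$ or note explicitly where strict positivity is actually needed.
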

The proof is also by explicit construction of the conformal factor using specific cut-off functions near the two $T^{n-1}$. A computation reveals that $\mu-|J|$ adopts a negative contribution from the cut-off function. One then uses the redistribution of Lemma \ref{lem:L1} to compensate. (iii) then follows by explicit computation of the second fundamental form of the two $T^2$ under the said conformal transformation. \\ \indent 
Since the arguments for Lemma \ref{lem:L1} and Proposition \ref{prop:L2} are purely local, we apply them to obtain two strictly mean convex $T^2$'s, $T^2_+$ and $T^2_-$. The outer null expansion $\theta^+|_{T^2_+}$ of $T^2_+$ (pointing away from $\mathscr S$) satisfies $\theta^+|_{T^2_+}>0$, the inner null expansion $\theta^-|_{T^2_-}$ (pointing towards $\mathscr S$) satisfies $\theta_-|_{T^2_-}<0$, and since from above $\theta^+|_{\partial X_i}<0$ on each component, we deduce from the results of Eichmair \cite{E09} the existence of a compact outermost stable MOTS $\Sigma_i$ embedded in $X_i$ that is homologous to the outer untrapped $T^2_+$.\footnote{In \cite{L16} the set-up being slightly different permits showing that such a MOTS is in fact quasi-isometric to $T^2\# N^2$ where $N^2$ is a closed orientable surface. This is perhaps morally true in our case but no such statement seems to be available.} \\ \indent
An important fact from \cite{E09} is that $\Sigma_i$ is `$C$-almost minimizing' in the sense of currents, where recall that a current $T$ is $C$-almost minimizing for some constant $C\ge 0$ in an open set $W$ if
\[\mathbf M_W(T)\le \mathbf M_W(T+\partial X)+C\mathbf M_W(X)\]
whenever $X$ is of one dimension higher than $T$ and $\spt X\subset W$. Say $T$ is the fundamental class carried by an orientable hypersurface $\Sigma$ intersecting $\partial W$ transversally. By choosing $X$ to be the fundamental class of a suitable collection of components of $W\setminus \Sigma$ ($X$ can be made to have compact support by an approximation argument), we can ensure that $\spt(T+\partial X)\subset \partial W$. In this case we have a mass bound
\[\mathbf M_W(T)\le \mathbf M(\partial W)+C\mathbf M(W)=C(W).\]
Eichmair's argument that his construction produces $C$-almost minimizing surfaces is local in the sense that, for a subset $\Omega'\subsetneq \Omega$, the constant $C_{\Omega'}$ depends only on $\Omega'$. This is because the MOTS is constructed by taking a limit of graphs of Jang functions in the cylinder over the data set. These Jang graphs have mean curvature bounded in $\Omega'$ by a constant depending on data in $\Omega'$. In Eichmair's notation, $G_t$ is $n\|k\|_{L^\infty(\Omega')}$-almost minimizing in $\Omega'\times\Bbb R$. The arguments in Eichmair, particularly Lemma A.1, will guarantee that the MOTS is $n\|k\|_{L^\infty(\Omega')}$-almost minimizing in $\Omega'$. \\ \indent
The second key fact is the curvature estimate for MOTS, shown independently by Andersson-Metzger \cite{AM10} and Eichmair \cite{E09} with slightly different hypotheses. In particular, Andersson--Metzger \cite{AM10} show 

\begin{lem}
Let $\Sigma^2$ be a symmetrized stable MOTS in a 3-dimensional compact initial data set $(M^3,g,k)$. Then $|A_\Sigma|\le C(g,k)$.
\end{lem}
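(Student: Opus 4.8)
The plan is to reduce the estimate, by a blow-up argument, to the nonexistence of a nonflat complete stable minimal surface in $\R^3$ \cite{FCS80}, the mechanism being the symmetrized MOTS stability inequality. Recall that $\Sigma$ being a symmetrized stable MOTS means
\begin{equation*}
\int_\Sigma\bigl(|\nabla\psi|^2 + Q\psi^2\bigr)\ge 0 \quad\text{for all } \psi\in C^1(\Sigma),\qquad Q = K_\Sigma - \bigl(\mu + J(\nu)\bigr) - \tfrac12|\chi|^2,
\end{equation*}
where $K_\Sigma$ is the Gauss curvature and $\chi = (A_\Sigma + k)|_{T\Sigma}$ is the null second fundamental form of $\Sigma$ along $l^+$. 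Combining this with the Gauss equation $K_\Sigma = \tfrac12 R_M - \mathrm{Ric}_M(\nu,\nu) - \tfrac12|A_\Sigma|^2$, the elementary bound $|\chi|^2\ge\tfrac12|A_\Sigma|^2 - |k|^2$, and the compactness of $M$ (which bounds $R_M$, $\mathrm{Ric}_M$, $\mu$, $J$, $k$ in terms of the data) yields the Schoen-type inequality
\begin{equation*}
\tfrac34\int_\Sigma|A_\Sigma|^2\psi^2 \le \int_\Sigma|\nabla\psi|^2 + C_1\int_\Sigma\psi^2,\qquad C_1 = C_1(g,k),
\end{equation*}
for all $\psi\in C^1(\Sigma)$. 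With the area bound $\mathbf M(\Sigma)\le C(g,k)$ (from the $C$-almost minimizing property quoted above together with the ambient mass bound), testing with $\psi\equiv 1$ already yields the scale-invariant estimate $\int_\Sigma|A_\Sigma|^2\le C(g,k)$; this is precisely the input required by the Schoen--Simon regularity theory \cite{SchoenSimon}, which could be invoked in place of the blow-up below.

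Assume the conclusion fails: there are symmetrized stable MOTS $\Sigma_i\subset(M,g,k)$ with $\lambda_i := \max_{\Sigma_i}|A_{\Sigma_i}|\to\infty$, attained at $p_i\in\Sigma_i$, and after passing to a subsequence $p_i\to p$. Rescale about $p_i$ by setting $g_i = \lambda_i^2 g$ and $k_i = \lambda_i k$. The transformation $(g,k)\mapsto(\lambda^2 g,\lambda k)$ leaves the MOTS equation $\theta^+ = 0$ and the symmetrized stability condition invariant, so $\Sigma_i$ is still a symmetrized stable MOTS for $(g_i,k_i)$, now with $|A_{\Sigma_i}|_{g_i}\le 1$ and equality at $p_i$, while $(g_i,k_i)\to(\delta,0)$ in $C^\infty_\loc$ near $p_i$ and $\mu_i, J_i, k_i\to 0$. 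The $C$-almost minimizing constant rescales to $\lambda_i^{-1}C(g,k)\to 0$; standard comparison of $\Sigma_i$ in small $g_i$-balls with capped competitors then gives uniform \emph{local} mass bounds $\mathbf M_{g_i}(\Sigma_i\cap B_r(p_i))\le c_0(r)$ for $r$ below a fixed radius. With $|A_{\Sigma_i}|_{g_i}\le 1$, the compactness Theorem \ref{thm:compactness} applies and produces, after a subsequence, a properly embedded minimal surface $\Sigma_\infty\subset\R^3$ (possibly with multiplicity) with $|A_{\Sigma_\infty}|(0) = 1$, the convergence being in $C^\infty_\loc$.

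It remains to pass the raw stability inequality to the limit. With respect to $g_i$ the potential is $Q_i = K_{\Sigma_i} - (\mu_i + J_i(\nu)) - \tfrac12|\chi_i|^2$; since $\mu_i, J_i, k_i\to 0$ and the $\Sigma_i$ converge smoothly, $\chi_i\to A_{\Sigma_\infty}$ and $K_{\Sigma_i}\to K_{\Sigma_\infty} = -\tfrac12|A_{\Sigma_\infty}|^2$, so $Q_i\to -|A_{\Sigma_\infty}|^2$. Transplanting a fixed $\psi\in C^1_c(\Sigma_\infty)$ onto the $\Sigma_i$ via the normal-graph correspondence, the local mass bounds control $\int_{\Sigma_i}\psi^2$, and passing to the limit gives
\begin{equation*}
\int_{\Sigma_\infty}\bigl(|\nabla\psi|^2 - |A_{\Sigma_\infty}|^2\psi^2\bigr)\ge 0 \quad\text{for all } \psi\in C^1_c(\Sigma_\infty),
\end{equation*}
i.e.\ $\Sigma_\infty$ is a complete stable minimal surface in $\R^3$, which by \cite{FCS80} must be a plane, contradicting $|A_{\Sigma_\infty}|(0) = 1$. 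The main technical obstacle is the blow-up step: verifying the invariance and limiting behaviour of symmetrized stability, and extracting the uniform local mass bounds for the rescaled MOTS from the (rescaled) $C$-almost minimizing property so that Theorem \ref{thm:compactness} is applicable and the test functions can be transplanted; the $\R^3$ rigidity and the rescaling bookkeeping are routine.
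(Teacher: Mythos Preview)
The paper does not give its own proof; it attributes the lemma to Andersson--Metzger \cite{AM10} and summarizes their method in one line: a curvature bound in terms of area, followed by a local area estimate for \emph{intrinsic} balls that is special to dimension two. Your blow-up route is a legitimate alternative, and the scaling bookkeeping, the Schoen-type inequality, and the passage of symmetrized stability to the limit are all fine.

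There is, however, a real gap. At both the Schoen--Simon remark and the blow-up step you invoke the $C$-almost minimizing property to produce area bounds (global in the first case, local mass bounds for the rescaled $\Sigma_i$ in the second, so that Theorem~\ref{thm:compactness} applies), but $C$-almost minimizing is \emph{not} a hypothesis of the lemma. As stated --- and as proved in \cite{AM10} --- the lemma assumes only symmetrized stability; their point is precisely that in dimension two one can extract the required area control from stability alone, via a Pogorelov/Colding--Minicozzi type bound on the area of intrinsic geodesic balls. So what you have actually established is ``symmetrized stable \emph{and} $C$-almost minimizing $\Rightarrow$ curvature bound,'' which (as the paper itself remarks in the sentence following the lemma) suffices for its application since Eichmair's MOTS are almost minimizing, but is not the lemma.

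The blow-up can be repaired without almost minimizing. After rescaling you already have $|A_{\Sigma_i}|_{g_i}\le 1$, so each $\Sigma_i$ is locally a graph over its tangent plane with uniform $C^2$ control; take a smooth limit of the sheet through $p_i$ by Arzel\`a--Ascoli and elliptic estimates for the MOTS equation, bypassing Theorem~\ref{thm:compactness} and its mass hypothesis entirely. Completeness of the limit follows since $|H_{\Sigma_i}|_g\le n\|k\|_{L^\infty}$ forces a uniform lower diameter bound on the $\Sigma_i$ in $(M,g)$. What you are missing is exactly this replacement of extrinsic mass control by intrinsic graphical control --- the two-dimensional content that Andersson--Metzger isolate in a different guise.
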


They first prove a curvature bound in terms of area, and then a local area estimate for intrinsic balls in dimension 3. Alternatively, since the MOTS in our proof are constructed by Eichmair's method, one can use standard techniques in geometric measure theory to obtain a cuvature estimate for these MOTS.



We now consider the convergence of $\{\Sigma_i\}$. Since each $\Sigma_i$ is homologous to one of the outer untrapped $T^2$, a homological anchoring argument similar to that given in the proof of Theorem \ref{thm:main} keeps members of $\{\Sigma_i\}$ from leaving $U(\mathscr S)$.\\ \indent
At this point in the proof of Theorem \ref{thm:main}, we were able to show the convergence $C^{\infty}_{\text{loc}}$ to a properly embedded stable minimal hypersurface $\Sigma$. Unfortunately, even if each MOTS in the sequence $\{\Sigma_i\}$ is embedded, the limit $\Sigma$ can fail to be embedded. The reason for this is that a MOTS can ``touch itself" tangentially if the normals are misaligned at the point of tangency.  This phenomenon is well known even for constant mean curvature surfaces, cf. \cite[Remark 8.3]{AM10}. $\Sigma$ can thus fail to be embedded. Fortunately, the limiting MOTS turns out to be \textit{almost embedded}.
\begin{defn}
An immersed hypersurface $\Sigma$ is \emph{almost embedded} if, near each point in $M$ where the immersion map fails to be injective, $\Sigma$ can be written as finitely many sheets, each lying on one side of the other. 
\end{defn}
\begin{thm}\label{thm:MOTScompactness}
Let $M^n$ be a smooth manifold and $\{(g_i,k_i)\}$ a sequence of initial data, $g_i$ a complete metric and $k_i$ a symmetric $2$-tensor, converging in $C^{\infty}_{\text{loc}}$ to $(g,k)$. Let $\{\Sigma_i\}$ be a sequence of almost embedded $(g_i,k_i)$-MOTS with multiplicity satisfying 
\[
\sup_i \left( \mathbf{M}_{g_i} (\Sigma_i \Mrest K)+\max_K |A_{\Sigma_i}|_{g_i}\right) <\infty
\]
for every compact set $K\subset M$. Then, after passing to a subsequence, $\{\Sigma_i\}$ converges in $C^{\infty}_{\text{loc}}$ to a $(g,k)$-MOTS with multiplicity. Moreover, if $\Sigma_i$ is a boundary then $\Sigma$ is also composed of boundaries. 
\end{thm}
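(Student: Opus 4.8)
The plan is to follow the proof of Theorem~\ref{thm:compactness} almost verbatim, replacing the minimal surface equation by the quasilinear elliptic MOTS equation and keeping careful track of the almost-embedded structure in the limit. Fix a relatively compact open set and let $K$ be its closure. Since $g_i\to g$ in $C^\infty_\loc$, there is a radius $r_0=r_0(K)>0$ and coordinate charts around the points of $K$ in which the curvature bound $\max_K|A_{\Sigma_i}|_{g_i}\le C$ forces each connected sheet of $\Sigma_i$ meeting a ball $B_r(p)$ with $p\in K$ and $r\le r_0$ to be a normal graph of a function $u_i$ with $\|u_i\|_{C^{1,1}}\le C'$, uniformly in $i$. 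A sheet meeting $B_{r_0/2}(p)$ extends as such a graph over a disc of definite radius, hence carries mass $\ge c_0 r_0^{n-1}$ inside a slightly larger compact set, so the hypothesis $\sup_i\mathbf M_{g_i}(\Sigma_i\Mrest K)<\infty$ bounds, uniformly in $i$, the number $N=N(K)$ of sheets of $\Sigma_i$ (counted with multiplicity) through $B_{r_0/2}(p)$.

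Next I upgrade the $C^{1,1}$ bound to uniform $C^\infty_\loc$ bounds. For a graph as above the MOTS condition $\theta^+_{\Sigma_i}=H_{g_i}+\tr_{\Sigma_i}k_i=0$ becomes a quasilinear equation of the form $a^{\alpha\beta}(x,u_i,Du_i)\,\partial_\alpha\partial_\beta u_i+b(x,u_i,Du_i)=0$ whose coefficients depend smoothly on $(g_i,k_i)$ and finitely many of their derivatives, hence are bounded in $C^\infty$ uniformly in $i$ on $K$, and which is uniformly elliptic because $|Du_i|$ is controlled by the curvature bound. Schauder estimates then bootstrap the $C^{1,\alpha}$ bound to uniform $C^{m,\alpha}$ bounds on slightly smaller balls for every $m$. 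Covering $K$ by finitely many balls $B_{r_0/2}(p_\ell)$, applying Arzela--Ascoli to each of the at most $N$ sheets there, and diagonalizing over a compact exhaustion $K_1\subset K_2\subset\cdots$ of $M$ produces a subsequence along which $\Sigma_i\to\Sigma$ in $C^\infty_\loc$, where $\Sigma$ is the hypersurface with multiplicity assembled from the limiting graphs; it has locally finite mass by lower semicontinuity. Passing to the limit in the displayed equation (all terms converge by the smooth convergence of the sheets and of $(g_i,k_i)$) shows that $\Sigma$ is a $(g,k)$-MOTS.

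It remains to check that $\Sigma$ is at worst almost embedded, and to treat the boundary statement. Over each coordinate ball the limiting sheets are graphs $u^{(1)}\le\cdots\le u^{(m)}$ arising as limits of the (necessarily linearly ordered) graphs of $\Sigma_i$, so any two limiting sheets are ordered; since each solves the same scalar MOTS equation, the strong maximum principle for this operator forces two sheets agreeing at a point to coincide. Hence distinct sheets can only touch tangentially with their chosen normals misaligned --- precisely the almost-embedded picture, with no worse singularity --- so $\Sigma$ is an almost-embedded $(g,k)$-MOTS with multiplicity. For the ``moreover'' part, if $\current{\Sigma_i}=\partial\current{\Omega_i}$ for a region $\Omega_i$, then on each $K_j$ the $n$-currents $\current{\Omega_i}$ have mass $\le C_j$ uniformly in $i$ (the weight of $\Omega_i$ on $K_j$ jumps across the $\le N(K_j)$ sheets present, so is bounded, and $\vol_{g_i}(K_j)$ is bounded); by $BV_\loc$-compactness a further subsequence gives $\current{\Omega_i}\to\current{\Omega_\infty}$ weakly for a set $\Omega_\infty$ of locally finite perimeter. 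Since $C^\infty_\loc$ convergence of $\Sigma_i$ implies weak convergence $\current{\Sigma_i}\to\current{\Sigma}$ of the underlying integral currents, continuity of $\partial$ under weak convergence gives $\current{\Sigma}=\partial\current{\Omega_\infty}$, exhibiting $\Sigma$ as a boundary; in particular it is composed of boundaries.

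I expect the genuinely delicate point to be the limit geometry rather than the elliptic estimates: one must ensure the sheet bookkeeping (with multiplicity, and with the orientations and normal directions that define the MOTS) stays consistent when several sheets of $\Sigma_i$ collapse onto one limiting sheet, and that the maximum principle for the MOTS operator applies in that degenerate configuration so as to rule out limits worse than almost embedded. The graphical decomposition, Schauder bootstrap, and Arzela--Ascoli steps are routine, being essentially those of Theorem~\ref{thm:compactness}.
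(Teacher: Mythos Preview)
Your argument is correct and is essentially a detailed unpacking of the paper's own proof, which simply cites \cite[Theorem 2.11]{ZZ19} and notes that their key structural lemma (Lemma 2.7) carries over to MOTS via the one-sided maximum principle of Ashtekar--Galloway \cite{AG05}. The one place worth tightening is your maximum-principle step for touching sheets with aligned normals: the precise result you are invoking is exactly the Ashtekar--Galloway one-sided maximum principle for MOTS, and naming it would firm up your slightly informal appeal to ``the strong maximum principle for this operator.''
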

\begin{proof} Compare with the \cite[Theorem 2.11]{ZZ19}.
There is a one-sided maximum principle for MOTS, see Ashtekar--Galloway \cite{AG05}. So \cite[Lemma 2.7]{ZZ19} works for MOTS. From here, we can follow the proof of \cite[Theorem 2.11]{ZZ19} verbatim.
\end{proof}
Note that elsewhere in the non-compact ends of $X$, it could be that the number of components of $\Sigma_i$ grows with $i\to \infty$, which would ruin this property of $\Sigma$. Since we are only interested in what occurs within a compact set in $M$ between the two $T^2$ which also contains the surgery sphere, this is not an issue we have to deal with. \\ \indent
Moreover, it is easy to see that, since member of $\{\Sigma_i\}$ is stable, each leaf of $\Sigma$ is in fact symmetrized stable; that is,
\begin{equation}
\int_{\Sigma} |\nabla \phi|^2-\frac{1}{2}(|\chi|^2-R_{\Sigma}+2\tilde{\Psi}_M)\phi^2 d\mu \geq 0
\end{equation}
for all compactly supported $\phi$ on $\Sigma$, see \cite[Equation (27)]{AM10} for the definition of the associated curvature term $\tilde{\Psi}$. \\ \indent
We note in addition that the uniform area bound obtained above means that each leaf of the limit is proper. Indeed, if it is not proper, then there exist points $p_i\in \Sigma$ accumulating in $M$ but such that $B_r^\Sigma(p_i)$ are mutually disjoint. By the area lower bound obtained for surfaces with bounded mean curvature in \cite[Appendix B]{Fe96}, $M$ would have to have infinite area since each intrinsic ball contributes some definite amount of area, which is impossible.  \\ \indent
Given a non-compact component $\Sigma_{\mathrm{nc}}$ of $\Sigma$, symmetrized stability is sufficient to guarantee, from results in \cite{C14} which are themselves based on \cite{FCS80}, that $\Sigma_{\mathrm{nc}}$ is homeomorphic to either $\Bbb C$ or $\Bbb C^*$. The latter is forbidden by the rigidity that occurs in that case: $\mu-|J|$ must be $0$ along $\Sigma$. \\ \indent
A compact component $\Sigma_{\mathrm c}$ has finite area. In that case, a generalization of \cite[Theorem 8.8]{GL83} that replaces $R>0$ with $\mu-|J|>0$ and that holds for symmetrized stable immersed MOTS, yields that $\Sigma_{\mathrm c}$ has positive Euler characteristic and is thus homeomorphic to $S^2$ or ${\Bbb R} P^2$. We can rule out $\mathbb{R}P^2$ since the compactness theorem guarantees that the limits are boundaries, and thus orientable. See \cite[Section 4]{ALY20} for the details of this argument, which is stated for embedded MOTS but the immersed argument is the same. \\ \indent 
Considering now an anchoring set $K$ containing the surgery sphere. Within $K$, $\Sigma$ is a collection of finitely many almost embedded symmetrized stable MOTS, and are homeomorphic to portions of $\Bbb C$ and $S^2$. \\ \indent 
At this point in the proof of Theorem \ref{thm:main} we were able to use the global area estimate to rule out $\mathbb C$, but owing to the lack of such a bound in the current setting we employ a different argument.\\ \indent  
Let $\Sigma_i$ be the sequence of MOTS constructed above and suppose a component $S$ of the limiting MOTS $\Sigma$ is diffeomorphic to an almost embedded copy of $\Bbb C$.  Since $\Sigma$ is properly immersed, $\mathscr S\cap \Sigma$ is a finite disjoint union of immersed circles after possibly perturbing $\mathscr S$. Choose $i$ large enough that $\Sigma_i$ approximates $\Sigma$ well inside a large neighborhood of $\hat Y$. Now $\Sigma_i\cap \hat Y$ is abstractly diffeomorphic to $\Sigma\cap \hat Y$ (componentwise), hence
\[\g(\Sigma_i\cap\hat Y)=\g(\Sigma\cap \hat Y)=0 \quad\text{componentwise},\] and $\Sigma_i\cap \mathscr S$ is a union of (embedded) circles approximating $\Sigma\cap \mathscr S$. These circles bound (maps of) disks in $\mathscr S$, as $H^1(\mathscr S)=0$. We add and subtract these disks from $\Sigma_i$ and push them in opposite directions from $\mathscr S$. Abstractly, the cycles in $\hat Y$ created in this manner has genus $0$ since they are constructed by gluing disks to a genus $0$ surface. The cycles in $\hat X$ are boundaries by Corollary \ref{cor:csum3}. \\ \indent
$\Sigma$ is now a collection of embedded disjoint $S^2$, which contradicts the fact that each leaf is homologous to the outer untrapped $T^2_+$.

\bibliographystyle{acm}
\bibliography{SYSv3}

\begin{thebibliography}{10}

\bibitem{ALY20}
{\sc Alaee, A., Lesourd, M., and Yau, S.-T.}
\newblock Stable surfaces and free boundary marginally outer trapped surfaces.
\newblock {\em \emph{arXiv:2009.07933},\/} (2020).

\bibitem{AM10}
{\sc Andersson, L., and Metzger, J.}
\newblock Curvature estimates for stable marginally trapped surfaces.
\newblock {\em J. Differential Geom. 84}, 2 (2010), 231--265.

\bibitem{AG05}
{\sc Ashtekar, A., and Galloway, G.~J.}
\newblock Some uniqueness results for dynamical horizons.
\newblock {\em Adv. Theor. Math. Phys. 9}, 1 (2005), 1--30.

\bibitem{C14}
{\sc Carlotto, A.}
\newblock Rigidity of stable marginally outer trapped surfaces in initial data
  sets.
\newblock {\em Annales Henri Poincaré 17\/} (2016), 2825--2847.

\bibitem{CS18}
{\sc Cecchini, S., and Schick, T.}
\newblock Enlargeable metrics on nonspin manifolds.
\newblock {\em \emph{arXiv:1810.02116}\/} (2018).

\bibitem{CL20}
{\sc Chodosh, O., and Li, C.}
\newblock Generalized soap bubbles and the topology of manifolds with positive
  scalar curvature.
\newblock {\em \emph{arXiv:2008.11888}\/} (2020).

\bibitem{CP11}
{\sc Corvino, J., and Pollack, D.}
\newblock Scalar curvature and the {E}instein constraint equations.
\newblock In {\em Surveys in geometric analysis and relativity}, vol.~20 of
  {\em Adv. Lect. Math. (ALM)}. Int. Press, Somerville, MA, 2011, pp.~145--188.

\bibitem{Hebey}
{\sc Druet, O., Hebey, E., and Robert, F.}
\newblock {\em Blow-up theory for elliptic {PDE}s in {R}iemannian geometry},
  vol.~45 of {\em Mathematical Notes}.
\newblock Princeton University Press, Princeton, NJ, 2004.

\bibitem{E09}
{\sc Eichmair, M.}
\newblock The {P}lateau problem for marginally outer trapped surfaces.
\newblock {\em J. Differential Geom. 83}, 3 (2009), 551--583.

\bibitem{Federer}
{\sc Federer, H.}
\newblock {\em Geometric measure theory}.
\newblock Die Grundlehren der mathematischen Wissenschaften, Band 153.
  Springer-Verlag New York Inc., New York, 1969.

\bibitem{FCS80}
{\sc Fischer-Colbrie, D., and Schoen, R.}
\newblock The structure of complete stable minimal surfaces in {$3$}-manifolds
  of nonnegative scalar curvature.
\newblock {\em Comm. Pure Appl. Math. 33}, 2 (1980), 199--211.

\bibitem{Fe96}
{\sc Frensel, K.~R.}
\newblock Stable complete surfaces with constant mean curvature.
\newblock {\em Bol. Soc. Brasil. Mat. (N.S.) 27}, 2 (1996), 129--144.

\bibitem{G17}
{\sc Gromov, M.}
\newblock 101 questions, problems and conjectures around scalar curvature,.
\newblock {\em \emph{ihes.fr/~gromov/positivescalarcurvature/596/598/}\/}.

\bibitem{G96}
{\sc Gromov, M.}
\newblock Positive curvature, macroscopic dimension, spectral gaps and higher
  signatures.
\newblock {\em \emph{In: Gindikin S., Lepowsky J., Wilson R.L. (eds) Functional
  Analysis on the Eve of the 21st Century Volume II. Progress in Mathematics,
  vol 132. Birkhäuser Boston}\/}.

\bibitem{G83}
{\sc Gromov, M.}
\newblock Filling {R}iemannian manifolds.
\newblock {\em J. Differential Geom. 18}, 1 (1983), 1--147.

\bibitem{G18}
{\sc Gromov, M.}
\newblock Metric inequalities with scalar curvature.
\newblock {\em GAFA 28}, 6 (2018), 645--726.

\bibitem{G20}
{\sc Gromov, M.}
\newblock No metrics with positive scalar curvatures on aspherical
  $5$-manifolds.
\newblock {\em \emph{arXiv:2009.05332}\/} (2020).

\bibitem{GL80}
{\sc Gromov, M., and Lawson, Jr., H.~B.}
\newblock Spin and scalar curvature in the presence of a fundamental group.
  {I}.
\newblock {\em Ann. of Math. (2) 111}, 2 (1980), 209--230.

\bibitem{GL83}
{\sc Gromov, M., and Lawson, Jr., H.~B.}
\newblock Positive scalar curvature and the {D}irac operator on complete
  {R}iemannian manifolds.
\newblock {\em Inst. Hautes \'{E}tudes Sci. Publ. Math.}, 58 (1983), 83--196
  (1984).

\bibitem{Hatcher}
{\sc Hatcher, A.}
\newblock {\em Algebraic topology}.
\newblock Cambridge University Press, Cambridge, 2002.

\bibitem{Hatcher3M}
{\sc Hatcher, A.}
\newblock Notes on basic 3-manifold topology.
\newblock {\em https://pi.math.cornell.edu/~hatcher/3M/3Mdownloads.html\/}
  (2007).

\bibitem{K82}
{\sc Kazdan, J.~L.}
\newblock Deformation to positive scalar curvature on complete manifolds.
\newblock {\em Math. Ann. 261}, 2 (1982), 227--234.

\bibitem{Lee}
{\sc Lee, D.}
\newblock {\em Geometric Relativity}.
\newblock \emph{Graduate Studies in Mathematics, Vol. 201, AMS}, 2019.

\bibitem{L99}
{\sc Lohkamp, J.}
\newblock Scalar curvature and hammocks.
\newblock {\em Math. Ann. 313}, 3 (1999), 385--407.

\bibitem{L16}
{\sc Lokhamp, J.}
\newblock The higher dimensional positive mass theorem ii.
\newblock {\em \emph{arXiv:1612.07505 }\/} (2016).

\bibitem{MSY}
{\sc Meeks~III, W., Simon, L., and Yau, S.-T.}
\newblock Embedded minimal surfaces, exotic spheres, and manifolds with
  positive ricci curvature.
\newblock {\em Annals of Math. 116}, 3 (1982), 621--659.

\bibitem{MY}
{\sc Meeks~III, W., and Yau, S.-T.}
\newblock Topology of three dimensional manifolds and the embedding problems in
  minimal surface theory.
\newblock {\em Annals of Math. 112}, 3 (1980), 441--484.

\bibitem{SchoenSimon}
{\sc Schoen, R., and Simon, L.}
\newblock Regularity of stable minimal hypersurfaces.
\newblock {\em Comm. Pure Appl. Math. 34}, 6 (1981), 741--797.

\bibitem{SY79PMT}
{\sc Schoen, R., and Yau, S.~T.}
\newblock On the proof of the positive mass conjecture in general relativity.
\newblock {\em Comm. Math. Phys. 65}, 1 (1979), 45--76.

\bibitem{SY79}
{\sc Schoen, R., and Yau, S.~T.}
\newblock On the structure of manifolds with positive scalar curvature.
\newblock {\em Manuscripta Math. 28}, 1-3 (1979), 159--183.

\bibitem{SY82}
{\sc Schoen, R., and Yau, S.-T.}
\newblock Complete three dimensional manifolds with positive ricci and scalar
  curvature.
\newblock 209--228.

\bibitem{SY88}
{\sc Schoen, R., and Yau, S.-T.}
\newblock Conformally flat manifolds, {K}leinian groups and scalar curvature.
\newblock {\em Invent. Math. 92}, 1 (1988), 47--71.

\bibitem{SYbook}
{\sc Schoen, R., and Yau, S.-T.}
\newblock {\em Lectures on Differential Geometry,}.
\newblock International Press, 1994.

\bibitem{SY17}
{\sc Schoen, R., and Yau, S.-T.}
\newblock Positive scalar curvature and minimal hypersurface singularities.
\newblock {\em \emph{arXiv:1704.05490}\/} (2017).

\bibitem{Simon}
{\sc Simon, L.}
\newblock {\em Lectures on geometric measure theory}, vol.~3 of {\em
  Proceedings of the Centre for Mathematical Analysis, Australian National
  University}.
\newblock Australian National University, Centre for Mathematical Analysis,
  Canberra, 1983.

\bibitem{W19}
{\sc Wang, J.}
\newblock Contractible $3$-manifold and positive scalar curvature.
\newblock {\em \emph{arXiv:1901.04605}\/} (2019).

\bibitem{ZZ19}
{\sc Zhou, X., and Zhu, J.~J.}
\newblock Min-max theory for constant mean curvature hypersurfaces.
\newblock {\em Invent. Math. 218}, 2 (2019), 441--490.

\bibitem{Z20}
{\sc Zhu, J.}
\newblock Rigidity results for complete manifolds with nonnegative scalar
  curvature.
\newblock {\em \emph{arXiv:2008.07028}\/} (2020).

\end{thebibliography}

\end{document}